\newtheorem{theorem}{Theorem}[section]
\newtheorem{conjecture}[theorem]{Conjecture}
\newtheorem{lemma}[theorem]{Lemma}
\newtheorem{proposition}[theorem]{Proposition}
\newtheorem{corollary}[theorem]{Corollary}
\newtheorem*{theorem*}{Theorem}
\newtheorem{maintheorem}{Theorem}
\theoremstyle{definition}
\newtheorem{definition}[theorem]{Definition}
\theoremstyle{remark}
\newtheorem{remark}[theorem]{Remark}
\newtheorem{example}[theorem]{Example}
\numberwithin{equation}{section}
\newcommand{\HH}{\mathcal{H}}
\newcommand{\symm}{\mathfrak{S}}
\newcommand{\CC}{\mathcal{C}}
\newcommand{\cc}{\mathfrak{c}}
\newcommand{\kk}{\mathbf{k}}
\newcommand{\C}{\mathbb{C}}
\newcommand{\syt}{{\sf SYT}}
\newcommand{\cont}{{\sf cont}}
\newcommand{\qcont}{{\sf qcont}}
\newcommand{\Z}{\mathbb{Z}}
\newcommand{\row}{ {\sf row}}
\newcommand{\col}{{\sf col}}
\newcommand{\M}{{\sf{M}}}
\newcommand{\F}{\mathbb{F}}
\newcommand{\MM}{\mathcal{M}}
\DeclareMathOperator{\sh}{sh}
\DeclareMathOperator{\GL}{GL}
\DeclareMathOperator{\Hilb}{Hilb}
\newcommand{\stat}{{\sf stat}}
\newcommand{\inv}{{\sf inv}}
\newcommand{\maj}{{\sf maj}}
\newcommand{\sfc}[1][n]{\mathsf{c}_{#1}}
\newcommand\qbinom[3]{{\begin{bmatrix} #1\\ #2 \end{bmatrix}_{#3}}}
\newcommand\smallqbinom[3]{{\left[\begin{smallmatrix} #1\\ #2 \end{smallmatrix}\right]_{#3}}}
\newcommand{\qint}[2][q]{\left\lbrack #2 \right\rbrack_{#1}}
\newcommand\qfact[1]{\left\lbrack #1 \right\rbrack!_q}
\newcommand\ncycle[1][n]{(1\, 2\, \ldots\, #1)}
\title[Factorizations in Hecke algebras I]{Factorizations in Hecke algebras I: \\ long cycle factorizations and Jucys--Murphy elements}
\newcommand\YS{\begin{gathered}\begin{tikzpicture}[scale = .1]}
\newcommand\YSF{\begin{gathered}\begin{tikzpicture}[scale = .4]}
\newcommand\YBF[3]{\draw[fill = yellow] (#1,#2) rectangle (#1+1,#2+1) node[pos=.5] {\scriptsize $#3$};}
\newcommand\YF{\end{tikzpicture}\end{gathered}}
\author[Bastidas]{Jose Bastidas}
\address[J.~Bastidas]{LACIM, D\'epartement de Mathématiques, Universit\'e du Qu\'ebec \`a Montr\'eal, Canada}
\email{bastidas.math@proton.me}
\urladdr{https://bastidas-jose.codeberg.page/}
\author[Brauner]{Sarah Brauner}
\address[S.~Brauner]{Division of Applied Mathematics, Brown University, USA}
\email{sarahbrauner@gmail.com}
\urladdr{https://www.sarahbrauner.com/}
\author[Guay-Paquet]{Mathieu Guay-Paquet}
\address[M. Guay-Paquet]{LACIM, Universit\'e du Qu\'ebec \`a Montr\'eal, Canada}
\email{mathieu.guaypaquet@lacim.ca}
\urladdr{https://www.pointedset.ca/}
\author[Morales]{Alejandro H. Morales}
\address[A. H. Morales]{LACIM, D\'epartement de Mathématiques, Universit\'e du Qu\'ebec \`a Montr\'eal, Canada}
\email{morales\_borrero.alejandro@uqam.ca}
\urladdr{https://sites.google.com/view/ahmorales}
\author[Park]{GaYee Park}
\address[G. Park]{Department of Mathematics, Dartmouth College, USA}
\email{gayee.park@dartmouth.edu}
\urladdr{https://sites.google.com/view/gayeepark}
\author[Saliola]{Franco Saliola}
\address[F.~Saliola]{LACIM, D\'epartement de Mathématiques, Universit\'e du Qu\'ebec \`a Montr\'eal, Canada}
\email{saliola.franco@uqam.ca}
\urladdr{https://saliola.github.io/}
\keywords{Hecke algebra, Jucys--Murphy elements, permutation factorization, q-analogs, long cycle}
\subjclass{
20C08, 
05A05, 
05A15, 
05A30, 
05E05. 
}
\begin{document}

\begin{abstract}
Given a permutation, there is a well-developed literature studying the number of ways one can factor it into a product of other permutations subject to certain conditions. We initiate the analogous theory for the type $A$ Iwahori--Hecke algebra by generalizing the notion of factorization in terms of the Jucys--Murphy elements. Some of the earliest and most foundational factorization results for the symmetric groups pertain to the long cycle. Our main results give $q$-deformations of these long cycle factorizations and reveal $q$-binomial, $q$-Catalan, and $q$-Narayana numbers along the way.
\end{abstract}
\maketitle
\section{Introduction}
Given a permutation $\sigma$ in the symmetric group $\symm_n$, there is an extensive and active body of research
(see e.g. \cite{bousquet2000enumeration,chapuy_Stump,GouldenJackson,GoupilSchaeffer,Jackson1}) that studies the question: how many different ways can $\sigma$ be written as a product of permutations satisfying a given condition? This is called a \emph{factorization} of $\sigma$. Permutation factorizations have diverse applications, for example to \emph{Hurwitz numbers} \cite{goulden1997transitive,goulden2005towards} which count ramified coverings of the sphere and are related to integrals over the moduli space of stable curves, as well as to the asymptotic expansions of certain integrals arising in random matrix theory and mathematical physics \cite{goulden2013monotone,goulden2013polynomiality}.

The origins of this field lie in the study of factorizations
of a specific and important element of the symmetric group: the long cycle $\sfc:= \ncycle$.
There is a variety of elegant formulas for different factorization statistics of $\sfc$, proved with diverse methods that span enumeration, generating functionology, and representation theory.

In this paper, we initiate the analogous study for the type $A$ Iwahori--Hecke algebra $\HH_n(q)$ (or Hecke algebra, for short), which is a $q$-deformation of the group algebra of the symmetric group $\C[\symm_n]$. The Hecke algebra has fundamental connections to geometric representation theory, knot theory, and even quantum computation and integrable probability. Our work is part of a larger movement (see e.g. \cite{axelrod2024spectrum, brauner2025q, clearman2016evaluations, clearwater2021total, GLTW24ratCatalan, Haiman, halverson1997iwahori, halverson2004q, kaliszewski2019bases, konvalinka2011generating, meliot2010products, ram1991frobenius, ram1996robinson, ram2024lusztig}) to uncover the \emph{combinatorics} of $\HH_n(q)$ in parallel to the cornucopia of results for the symmetric group.

One challenge of developing a factorization theory for $\HH_n(q)$ is that factorization in an algebra is more complex than in a group. For example, $\HH_n(q)$ has a natural basis $\{ T_w \}$ indexed by $w \in \symm_n$, but it is extremely rare for an element $T_w$ to be written as a product of other elements $T_{u_1} \cdots T_{u_k}$,
unless all the $u_k$ are simple transpositions.

Hence, our first task is to generalize the notion of factorization to something that makes sense in $\HH_n(q)$. To do so, we use the remarkable fact that many classical $\sfc$-factorization statistics can be rephrased algebraically as the coefficient of $\sfc$ in certain elements of the group algebra $\C[\symm_n]$. Given a symmetric function $f$, we consider the evaluation of $f$ at the \emph{Jucys--Murphy elements} $J_1, \ldots, J_n$ of the symmetric group algebra, and in particular the coefficient of $\sfc$ in this expression:
\[  [\sfc]f\big(J_1, \ldots, J_n\big) \quad \quad \textrm{ for } f \textrm{ a symmetric function}.\]
For the correct choice of $f$, this coefficient computes  the number of certain types of factorizations of $\sfc$; one can think of $f$ as dictating the parameters of the factorization.

Our notion of $\HH_n(q)$-factorization generalizes this perspective. We will compute the coefficient of $T_{\sfc}$---the $\HH_n(q)$-analogue of $\sfc$---in the evaluation of various symmetric functions at the $q$-Jucys--Murphy elements $J_1(q), \ldots, J_k(q)$ of $\HH_n(q)$:
\[ [T_{\sfc}]f\big(J_1(q), \ldots, J_n(q)\big) \quad \quad \textrm{ for }  f \textrm{ a symmetric function}.\]
Once more, the choice of $f$ determines the type of factorization being computed.

Our main results (stated in \cref{thm:intro_q_summary_thm} below) show that this notion of factorization in $\HH_n(q)$ is a good one, in the sense that we provide elegant $q$-deformations of several foundational factorization results for $\sfc$ (summarized in \cref{thm:introsymmetricsummary} below). In addition, our proofs provide a unifying framework with which to understand and further build the theory of factorizations, even when $q=1$. We expect our work here to be a jumping point for a robust $\HH_n(q)$-factorization theory, parallel to the classical story for $\symm_n$.

We highlight some of our results in \cref{fig:summary}.

\begin{table}[!h]
    \centering
    \begin{tabular}{m{80mm}m{30mm}l} \toprule
        number of factorizations of $\sfc$                                           & in $\symm_n$                              & $\HH_n(q)$-analogue                                                                                         \\ [3pt]  \midrule
        into $n-1$ transpositions                                                    & $n^{n-2}$                                        & $q^{-\binom{n}{2}} \qint{n}^{n-2}$                                                                        \\ [2ex]
        into $n-1$ transpositions (monotone)                                         & $C_{n-1}$                                        & $q^{-\binom{n}{2}}C_{n-1}(q)$                                                                          \\ [2ex]
        into two factors with $p+1$ and $n-p$ cycles                                 & $N_{n,p+1}$                                      & $q^{1-(p+1)(n-p)} N_{n,p+1}(q)$                                                                        \\ [2ex]
        into $m$ factors $\pi_i$ with $n-\lambda_i$ cycles for $\lambda \vdash n-1$  & $\frac{1}{n} \prod_{i=1}^m \binom{n}{\lambda_i}$ & $q^{\sum_i \binom{\lambda_i}{2}-\binom{n}{2}} \frac{1}{\qint{n}} \prod_{i} \smallqbinom{n}{\lambda_i}{q}$ \\ [3pt]
        \bottomrule \\
    \end{tabular}
    \caption{Summary of some factorization results in the symmetric group and the Hecke algebra.}
    \label{fig:summary}
\end{table}

\subsection{Prototypical factorization results in the symmetric group}\label{ssec:intro symmetric}
Our goal will be to generalize the following foundational examples of factorizations of the long cycle $\sfc = \ncycle \in \symm_n$.
\begin{itemize}[topsep=1ex, itemsep=1ex]
    \item Let $a(n; j)$ be the number of factorizations of $\sfc$ as a product of $j$ transpositions $\tau_1\cdots \tau_{j}$.
    \item Similarly, let $b(n; j)$ be the number of {\em monotone} factorizations of $\sfc$ as a product of $j$ transpositions $\tau_1\cdots \tau_{j}$, that is if $\tau_i=(a_i\;b_i)$ with $a_i < b_i$, then $b_1\leq b_2 \leq \cdots \leq b_{j}$.
    \item Lastly, let $c(n;p_1,p_2,\ldots,p_m)$ be the number of factorizations of $\sfc$ into $m$ permutations $\pi_1\cdots \pi_m$  where $\pi_i$ has $n-p_i$ cycles for $i=1,\ldots,m$. Note that $c(n;1^{j})=a(n;j)$.
\end{itemize}
For $(r_1, \ldots, r_m) \in \Z_{\geq 0}^m$, we also write $\M^{n - 1}_{(r_1,\ldots,r_m)}$ to be the number of tuples $(S_1,\ldots,S_m)$ of
    (not necessarily disjoint) subsets of $[{n - 1}]$ satisfying
    \[ S_1\cup \cdots \cup
    S_m =[{n - 1}] \quad \textrm{ and } \quad \# S_i=r_i \textrm{ for } i=1,\ldots,m. \]
The numbers $a(n;j), b(n;j)$ and $c(n;p_1, \ldots, p_m)$ have elegant formulas, summarized in \cref{thm:introsymmetricsummary}.
\begin{theorem}\label{thm:introsymmetricsummary}
    For $n\geq 1$ and $\sfc = \ncycle$, we have the following factorization results:
    \begin{enumerate}
        \item (Jackson, {\cite[p. 368]{Jackson0}}) 
The numbers $a(n;j)$ of factorizations of $\sfc$ into $j$ transpositions satisfy
\begin{align}
\sum_{j \geq n-1} a(n;j) \, t^{j} &= \frac{n^{n-2} t^{n-1}}{ \prod_{k=0}^{n-1} \left(1-n\left(\frac{n-1}{2}-k \right)t\right)}.  \label{eq:Jackson2}
\end{align}
In particular, (see Hurwitz, \cite{Hurwitz})
\begin{equation} \label{eq: tree case} a(n;n-1) = n^{n-2}.\end{equation} \smallskip
    \item (Matsumoto--Novak \cite[Thm. 2]{MN}) \label{thm:MatsumotoNovak}
The numbers $b(n;j)$ of monotone factorizations of $\sfc$ into $j$ transpositions satisfy
\begin{equation}\label{eq:Masumoto-Novak}
\sum_{j \geq n-1} b(n;j) \, t^{j} = \frac{C_{n-1} \, t^{n-1}}{\prod_{k=0}^{n-1}\left(1-k^2 t^2\right)},
\end{equation}
where $C_{n-1}$ is the $(n-1)$st-Catalan number
\[ C_{n-1}=\frac{1}{n}\binom{2n-2}{n-1}.\]
In particular, (see Biane \cite{Biane96} and Stanley \cite[Thm. 3.1]{StanleyPF})
\[ b(n;n-1) = C_{n-1}.\] \smallskip

\item (Jackson \cite[Thm. 4.3]{Jackson0})
The numbers $c(n;p_1,\ldots,p_m)$ of factorizations of $\sfc$
    into $m$ permutations $\pi_1\cdots \pi_m$, where $\pi_i$ has $n - p_i$ cycles,
    satisfy
    \begin{equation} \label{eq:JacksonSn}
    \begin{multlined}
   \sum_{0 \leq p_1,\ldots,p_m \leq n-1} c(n;p_1,\ldots,p_m) \ t_1^{n-p_1}\cdots t_m^{n-p_m} \\
    \qquad= \ n!^{m-1} \sum_{0\leq r_1,\ldots,r_m \leq n-1} \M^{n-1}_{(r_1,\ldots,r_m)} \binom{t_1}{n-r_1}\cdots \binom{t_m}{n-r_m}.
    \end{multlined}
    \end{equation}
In particular, (see Goulden--Jackson~\cite[Theorem 3.2]{GouldenJackson}) when $p_1 + p_2 + \cdots + p_m = n-1$,
\begin{equation} \label{eq:GJcacti}
c(n;p_1,\ldots,p_m)  = \frac{1}{n} \prod_{i=1}^m \binom{n}{p_i},
\end{equation}
and so (see B\'edard and Goupil \cite[Theorem 3.1]{BedardGoupil}), for $1 \leq k \leq n-1$,
\[ c\,(n;k,n-1-k) = N_{n,k+1} = \frac{1}{n} \binom{n}{k+1} \binom{n}{k}\]
where $N_{n,k+1}$ is the \emph{Narayana number.}
    \end{enumerate}
\end{theorem}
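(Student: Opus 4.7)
The plan is to prove all three parts through a unified Jucys--Murphy strategy: realize each factorization count as the coefficient of $\sfc$ in a central element obtained by evaluating a symmetric function at $J_1,\ldots,J_n$, then extract it via character theory. The underlying algebraic fact is that a symmetric polynomial in the Jucys--Murphy elements acts on the Specht module $S^\lambda$ by the corresponding symmetric function evaluated at the contents of $\lambda$.

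Three setup identities drive the reductions. Jucys' formula $p_1(J)=\sum_{i<j}(i\;j)$ gives $a(n;j)=[\sfc]\,p_1(J)^j$. An expansion of the complete homogeneous polynomial gives $h_j(J)=\sum_{b_1\le\cdots\le b_j,\,a_s<b_s}(a_1\;b_1)\cdots(a_j\;b_j)$, whence $b(n;j)=[\sfc]\,h_j(J)$. The factorization $\prod_k(x+J_k)=\sum_\sigma x^{\cyc(\sigma)}\sigma$ yields $e_p(J)=\sum_{\cyc(\sigma)=n-p}\sigma$, so $c(n;p_1,\ldots,p_m)=[\sfc]\,e_{p_1}(J)\cdots e_{p_m}(J)$. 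Combining this with character orthogonality and the hook-only values $\chi^{(n-k,1^k)}(\sfc)=(-1)^k$, $\dim S^{(n-k,1^k)}=\binom{n-1}{k}$ reduces the extraction to the single hook sum
\[
[\sfc]\,z \;=\; \frac{1}{n!}\sum_{k=0}^{n-1}(-1)^k\binom{n-1}{k}\,\alpha_k(z),
\]
where $\alpha_k(z)$ is the value of the relevant symmetric function on the hook contents $C_k=\{-k,\ldots,-1,0,1,\ldots,n-k-1\}$.

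The three cases differ only in the explicit evaluation. For (1), $\alpha_k=\bigl(1-tn(\tfrac{n-1}{2}-k)\bigr)^{-1}$, producing a partial-fraction expansion that Lagrange-interpolates to \eqref{eq:Jackson2}. For (2), $\alpha_k=\prod_{j=1}^{n-k-1}(1-jt)^{-1}\prod_{j=1}^k(1+jt)^{-1}$, which combines over the common denominator $\prod_{k=1}^{n-1}(1-k^2t^2)$ to yield the Catalan numerator in \eqref{eq:Masumoto-Novak}. For (3), the compact identity $\prod_{c\in C_k}(t+c)=n!\binom{t+n-k-1}{n}$ together with Vandermonde $\binom{t_i+n-k-1}{n}=\sum_{r_i}\binom{t_i}{n-r_i}\binom{n-k-1}{r_i}$ swaps the summation order; the resulting inner sum $\sum_k(-1)^k\binom{n-1}{k}\prod_i\binom{n-1-k}{r_i}$ is precisely the inclusion--exclusion count $\M^{n-1}_{(r_1,\ldots,r_m)}$, delivering \eqref{eq:JacksonSn}. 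The specialization $\sum p_i=n-1$ forces the subsets in $\M^{n-1}$ to partition $[n-1]$, so $\M^{n-1}_{(p_1,\ldots,p_m)}=\binom{n-1}{p_1,\ldots,p_m}$ gives \eqref{eq:GJcacti}, and the $m=2$ case is the Narayana corollary. The principal obstacle is the combinatorial bridge in (3) between the signed hook contents and the unsigned covering tuples counted by $\M^{n-1}$; Vandermonde together with inclusion--exclusion makes this bridge explicit.
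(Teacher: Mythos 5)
Your proposal is correct and follows essentially the same route as the paper: the hook-sum formula $[\sfc]\,f(\Xi_n)=\frac{1}{n!}\sum_{k=0}^{n-1}(-1)^k\binom{n-1}{k}f\big(\cont((n-k,1^k))\big)$ is the $q=1$ case of \cref{prop:isoprojtechnique}, and your treatment of part (3) --- rising factorials, Chu--Vandermonde, and the inclusion--exclusion identification of the inner alternating sum with $\M^{n-1}_{(r_1,\ldots,r_m)}$ --- is exactly the paper's proof in \S\ref{sec:prototypical-example-isotypic-projectors-technique}. For parts (1) and (2) the paper simply cites the literature, but your partial-fraction arguments are precisely the $q=1$ specializations of the paper's own proofs of the corresponding $q$-analogues (\cref{thm:tree conj} and \cref{thm:cat conj}).
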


As we mentioned earlier, there is an interesting and uniform algebraic reformulation of the quantities $a(n;j), b(n;j)$ and $c(n;p_1,\ldots,p_m)$ (c.f. \cite{diaconis_greene,FerayJM,guay-paquet_thesis_2012}).
Recall that $\C[\symm_n]$ is the group algebra of the symmetric group, which has a linear basis given by permutations $w \in \symm_n$. For any $z \in \C[\symm_n]$ and $w \in \symm_n$, we write $[w]z$ to mean the coefficient of $w$ in $z$ when $z$ is expressed in this basis.

There are two keys to reformulating the factorization results for $\sfc$ algebraically. The first is the \emph{Jucys--Murphy elements} $J_1, \ldots, J_n$ of the symmetric group, where $J_k$ is the sum of transpositions:
\[ J_k = (1\ k) + (2 \ k) + (3 \ k) + \cdots + (k{-}1 \ k) \in \C[\symm_n].\]
The Jucys--Murphy elements are deeply connected to the representation theory of $\symm_n$, as described in \cref{sec:background-rep-theory}.
The second is the ring of symmetric functions $\Lambda$, along with the \emph{evaluation} of any $f \in \Lambda$ at the Jucys--Murphy elements:
\[ f(\Xi_n):=f\big(J_1, \ldots, J_n\big) \in \C[\symm_n]. \]
The previous factorization results can be understood as the coefficient of $\sfc$ in $f(\Xi_n)$ for different choices of $f \in \Lambda$. Namely,
\begin{align}
    a(n;j) &= [\sfc] \,\, {\color{RoyalBlue} e_{1^j}(\Xi_n)}
    \label{eq:a-reinterpreation}
    \\
    b(n;j) &= [\sfc] \,\, {\color{RoyalBlue} h_{j}(\Xi_n)}
    \label{eq:b-reinterpreation}
    \\
    c(n;p_1,\ldots,p_m) &= [\sfc] \,\, {\color{RoyalBlue} e_{(p_1,\ldots,p_m)}(\Xi_n)}
    \label{eq:c-reinterpreation}
\end{align}
where
\begin{itemize}
    \item
        $e_k$ is the $k$-th elementary symmetric function and
        $e_{(\alpha_1,\ldots,\alpha_m)}=e_{\alpha_1}\cdots e_{\alpha_m}$ for $\alpha_i \in \Z_{\geq 0}$;
    \item
        $h_j$ is the $j$-th complete symmetric function.
\end{itemize}
We make the right-hand-side of \cref{eq:a-reinterpreation}, \cref{eq:b-reinterpreation} and \cref{eq:c-reinterpreation} explicit in \cref{eq:a in sym alg}, \cref{eq: b in sym alg}, and \cref{eq: c in sym alg}, respectively.

\subsection{New factorization results for the Hecke algebra}
In this paper, we will give $q$-deformations of the quantities $a(n;j), b(n;j)$ and $c(n;p_1, \ldots, p_m)$ by computing the analogue of \cref{eq:a-reinterpreation}, \cref{eq:b-reinterpreation}, and \cref{eq:c-reinterpreation} in $\HH_n(q)$.

The relevant tools in this case will come from the evaluation of symmetric functions $f \in \Lambda$ in the \emph{$q$-Jucys--Murphy elements}
\begin{equation*}
    J_k(q) = q^{1-k} T_{(1\,k)} + q^{2-k} T_{(2\,k)} + \cdots + q^{-1} T_{(k{-}1\;k)} \in \HH_n(q).
\end{equation*}
These elements share many of the remarkable properties of their symmetric group counter-parts.
We denote the evaluation of a symmetric function $f$ at the
$q$-Jucys--Murphy elements by
\begin{equation*}
    f(\Xi_n(q)):=f\big(J_1(q),\ldots,J_n(q)\big) \in \HH_n(q).
\end{equation*}

Recall that we denote the long cycle by $\sfc = \ncycle \in \symm_n$,
and that $\HH_n(q)$ has a linear basis $\{ T_w \}$ indexed by permutations $w \in \symm_n$.
As in the case for the symmetric group, we would like to understand the coefficient
\[ [T_{\sfc}]f(\Xi_n(q))\]
for $f \in \Lambda$.
We define the following in $\HH_n(q)$:
\begin{align}
    a_q(n;j) &:= [T_{\sfc}] \,\, {\color{RoyalBlue} e_{1^j}\big(\Xi_n(q)\big)}
    \label{eq:a-q-reinterpreation}
    \\
    b_q(n;j) &:= [T_{\sfc}] \,\, {\color{RoyalBlue} h_{j}\big(\Xi_n(q)\big)}
    \label{eq:b-q-reinterpreation}
    \\
    c_q(n;p_1,\ldots,p_m) &:= [T_{\sfc}] \,\, {\color{RoyalBlue} e_{(p_1,\ldots,p_m)}\big(\Xi_n(q)\big)}
    \label{eq:c-q-reinterpreation}
\end{align}
As in the case of $\symm_n$,
we make the right-hand-side of \cref{eq:a-q-reinterpreation}, \cref{eq:b-q-reinterpreation} and \cref{eq:c-q-reinterpreation} explicit in \cref{eq:a-in-hecke}, \cref{eq:b in hecke}, and \cref{eq: c in hecke}, respectively.

Our results make use of the following standard $q$-analogues for $j, n, k \in \Z$ with $0 \leq k \leq n$:
\begin{equation*}
    \qint{j}:=(q^j-1)/(q-1),
    \qquad
    \qfact{n} = \qint{n} \qint{n-1} \cdots \qint{1},
    \qquad
    \qbinom{n}{k}{q}
    = \frac{ \qfact{n} }{ \qfact{k} \, \qfact{n-k} },
\end{equation*}
as well as the \emph{$q$-Catalan numbers} $C_{n}(q)$ and the \emph{$q$-Narayana numbers} $N_{n,k}(q)$
defined as
\begin{equation*}
    C_n(q) := \frac{1}{\qint{n+1}} \qbinom{2n}{n}{q}
    \qquad\text{and}\qquad
    N_{n,k}(q):=\dfrac{1}{\qint{n}}\qbinom{n}{k}{q}\qbinom{n}{k-1}{q}.
\end{equation*}
In addition, we define the following $q$-analogue of the binomial basis $\binom{t}{k} = \frac{1}{k!} \prod_{i=0}^{k-1} (t-i)$ of $\C[t]$:
\begin{equation*}
\binom{t}{k}_q := \frac{1}{\qfact{k}} \prod_{i=0}^{k-1} \big(t-q^{-i}\qint{i}\big) = \frac{1}{\qfact{k}} \prod_{i=0}^{k-1} \big(t + \qint{-i}\big).
\end{equation*}
Observe that $\binom{t}{k}|_{q=1} = \binom{t}{k}$ as a polynomial in $t$.

Finally, when $q$ is a power of a prime, let $\mathbb{F}_q$ be the finite field of cardinality $q$, and define $\M^{n}_{(r_1,\ldots,r_m)}(q)$ for $n,r_1,\ldots,r_m \in \Z_{\geq 0}$ as
\[\M^{n}_{(r_1,\ldots,r_m)}(q):= \# \Big \{ (V_1,\ldots,V_m): V_i \subset \mathbb{F}_q^{n} \textrm{ with } \dim(V_i) = r_i \textrm{ and } V_1 + \cdots + V_m = \mathbb{F}_q^{n} \Big\}.\]
One can show that $\M^{n}_{(r_1,\ldots,r_m)}(1) = \M^{n}_{(r_1,\ldots,r_m)}$, where $\M^n_{(r_1,\ldots,r_m)}$ is as defined in \cref{ssec:intro symmetric}; see \cref{cor:Mq-are-q-analogues}.

Our main result is a direct analogue of \cref{thm:introsymmetricsummary}.

\begin{maintheorem}\label{thm:intro_q_summary_thm}
    For $n\geq 1$, and $a_q(n;j), b_q(n;j)$ and $c_q(n; p_1, \ldots, p_m)$ as in \cref{eq:a-q-reinterpreation}, \cref{eq:b-q-reinterpreation}, and \cref{eq:c-q-reinterpreation}, we have the following:
    \begin{enumerate}
        \item (Proved as \cref{thm:tree conj}) The numbers $a_q(n;j)$ satisfy \begin{equation}
       \sum_{j \geq 0 } a_q(n;j) \, t^j
      = \dfrac{q^{-\binom{n}{2}} \ \qint{n}^{n-2} \ t^{n-1}}{\prod_{j=0}^{n-1} \left( 1 - \left( \tfrac{n - q^{-j}\qint{n}}{1-q} \right)t \right)}.
    \end{equation}
    In particular,
    \begin{equation}
       a_q(n;n-1)
        = q^{-\binom{n}{2}} \qint{n}^{n-2}.
    \end{equation}
    \smallskip

    \item (Proved as \cref{thm:cat conj}) The numbers $b_q(n;j)$ satisfy
        \begin{equation}
      \sum_{j \geq 0} b_q(n;j) \, t^j
       = \dfrac{q^{-\binom{n}{2}} \ C_{n-1}(q) \ t^{n-1}}{\prod_{j=-(n-1)}^{n-1} (1 - \qint{j}t)}.
    \end{equation}
    In particular,
    \begin{equation}\label{eq: q-Catalan}
     b_q(n;n-1)
        = q^{-\binom{n}{2}} \, C_{n-1}(q).
    \end{equation}
\smallskip
\item (Proved as \cref{thm:HeckeJacksonMs})
We have that

\begin{equation}\label{eq:intro-qMs}
\begin{multlined}
\sum_{0 \leq p_1,\ldots,p_m \leq n-1} c_q(n;p_1,\ldots,p_m) \ t_1^{n-p_1}\cdots t_m^{n-p_m} \\
      \qquad = \qfact{n}^{m-1} \quad \smashoperator{\sum_{0 \leq r_1,\ldots,r_m \leq n-1}} \quad q^{-\sum_i (n-r_i)r_i}  \, \M^{n-1}_{(r_1,\ldots,r_m)}(q) \, \binom{t_1}{n-r_1}_{q}\, \binom{t_2}{n-r_2}_q \cdots \binom{t_m}{n-r_m}_{q}.
        \end{multlined}
\end{equation}
In particular, when $p_1+p_2 + \cdots + p_m = n-1$,
\begin{equation} \label{intro:long-cycle-in-e-lambda-hecke-q}
c_q(n;p_1, \ldots, p_m)
= q^{\sum_i \binom{\lambda_i}{2}-\binom{n}{2}} \frac{1}{\qint{n}} \prod_{i} \qbinom{n}{\lambda_i}{q},
\end{equation}
and so for
$1 \leq k \leq n-1$ we have
\[
c_q(n;k,n-1-k)
= q^{1-(k+1)(n-k)} N_{n,k+1}(q).
\]
    \end{enumerate}
\end{maintheorem}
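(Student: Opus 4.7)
The plan is to establish all three parts via a unified representation-theoretic strategy. Since $f(\Xi_n(q))$ is central in $\HH_n(q)$ for every symmetric function $f$ and acts on the Specht module $S^\lambda$ by the scalar $f(\mathrm{qcont}(\lambda))$, where $\mathrm{qcont}(\lambda)$ denotes the multiset of $q$-contents $\qint{j-i}$ over the boxes $(i,j)\in\lambda$, character-theoretic Fourier inversion in $\HH_n(q)$ expresses the coefficient of interest as
\[
[T_\sfc]\, f(\Xi_n(q)) \;=\; \sum_{\lambda \vdash n} \frac{\dim S^\lambda}{\qfact{n}} \, f(\mathrm{qcont}(\lambda))\, \chi^\lambda(T_{\sfc^{-1}}),
\]
where $\chi^\lambda$ is the irreducible $q$-character. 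A $q$-Murnaghan--Nakayama computation then shows that $\chi^\lambda(T_\sfc)$ vanishes unless $\lambda = (n-k, 1^k)$ is a hook, in which case it equals an explicit signed $q$-power; this collapses the sum over partitions to a single alternating sum over $k \in \{0, \ldots, n-1\}$, with $q$-contents ranging over the bi-directional progression $\{\qint{-k}, \ldots, \qint{-1}, \qint{0}, \qint{1}, \ldots, \qint{n-k-1}\}$.

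Each part then requires a specific evaluation of $f$ at hook contents. For part (1), $f = e_{1^j}$ yields the $j$-th power of the content sum $e_1(\mathrm{qcont}(n-k,1^k))$, which one recognizes as a difference of two $q$-binomial coefficients; summing over $j$ produces a geometric series that matches the claimed rational generating function, and the $t^{n-1}$ coefficient recovers $q^{-\binom{n}{2}}\qint{n}^{n-2}$. For part (2), the generating series $\sum_j h_j(\mathrm{qcont}(n-k,1^k))\, t^j = \prod_i (1 - c_i t)^{-1}$ admits a partial-fraction decomposition whose alternating $k$-sum telescopes, producing the $q$-analogue of the Matsumoto--Novak rational function and isolating the $q$-Catalan number $C_{n-1}(q)$ as the $t^{n-1}$ coefficient. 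For part (3), $e_{(p_1,\ldots,p_m)}$ factors at hook contents into a product of $q$-binomial coefficients; re-expanding each factor in the basis $\binom{t_i}{n-r_i}_q$ and matching with the alternating hook sum identifies $\M^{n-1}_{(r_1,\ldots,r_m)}(q)$ via the finite-field interpretation of subspace covers of $\mathbb{F}_q^{n-1}$.

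The main obstacle is part (3): translating the alternating hook sum into the $\M^{n-1}_{(r_1,\ldots,r_m)}(q)$ generating function requires a $q$-inclusion--exclusion on Galois geometries that mirrors, but is not naively equivalent to, the classical subset-cover identity underlying Jackson's formula. A careful basis change to $\binom{t}{k}_q$, combined with the identification of the correct $q$-inclusion--exclusion on subspace tuples, is where most of the technical work should live; the Narayana specialization \eqref{intro:long-cycle-in-e-lambda-hecke-q} then falls out by taking $m=2$ and $p_1+p_2 = n-1$. Parts (1) and (2), while nontrivial, should reduce to one-variable generating-function manipulations whose classical counterparts have exact $q$-analogues.
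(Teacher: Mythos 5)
Your overall strategy coincides with the paper's: expand $f(\Xi_n(q))$ over the central idempotents using the $q$-content eigenvalues, observe that only hook shapes $(n-k,1^k)$ contribute to the coefficient of $T_{\sfc}$, and then evaluate the resulting alternating sum over $k$ for each choice of $f$ --- a geometric series for $e_1^j$, a product of geometric series for $h_j$, and for $e_{(p_1,\ldots,p_m)}$ a change of basis from $\prod_j(t+\qint{j})$ to the $q$-binomial basis $\binom{t}{k}_q$ followed by a $q$-inclusion--exclusion (M\"obius inversion on the subspace lattice of $\F_q^{n-1}$) that produces $\M^{n-1}_{(r_1,\ldots,r_m)}(q)$. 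Part (3) in particular is organized exactly as you describe.

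There is, however, one genuine error and one substantive omission. The inversion formula you display is the naive transport of the symmetric-group formula and is wrong for $\HH_n(q)$: the coefficient of $T_w$ in the isotypic projector $\pi_\lambda$ is not $\frac{\dim S^\lambda}{\qfact{n}}\chi^\lambda(T_{w^{-1}})$ but rather (Ram's formula) $\frac{f^\lambda(q)}{\qfact{n}}\,q^{-\ell(w)}\,\chi^\lambda(T_{w^{-1}})$, where $f^\lambda(q)$ is the \emph{generic degree} (the dimension of the unipotent $\GL_n(\F_q)$-representation indexed by $\lambda$), not the dimension $f^\lambda$ of the Specht module. For hooks one has $f^{(n-k,1^k)}(q)=q^{\binom{k+1}{2}}\qfact{n-1}/(\qfact{k}\,\qfact{n-k-1})$, and combining this with $\chi^{(n-k,1^k)}(T_{\sfc})=(-1)^k q^{n-k-1}$ and the factor $q^{-(n-1)}$ yields the weight $(-1)^k q^{\binom{k}{2}}\qbinom{n-1}{k}{q}/\qfact{n}$ on the $k$th hook. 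These powers of $q$ are essential: with your weights the alternating hook sums do not close up (for instance, the $q$-Chu--Vandermonde cancellation needed in part (2) fails), so the formula must be corrected before anything downstream works. Separately, the steps you describe as ``matches the claimed rational generating function'' and ``telescopes'' in parts (1) and (2) are precisely where the remaining computational content lives: one must prove a polynomial identity in $t$ of degree $n-1$ in each case, which the paper does by clearing denominators and evaluating both sides at the $n$ reciprocals of the poles, reducing part (1) to an explicit $q$-factorial product identity and part (2) to $q$-Chu--Vandermonde. As written, your sketch asserts these identities rather than establishing them.
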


We remark that the numbers $\M^n_{(r_1,\ldots,r_m)}(q)$ are of independent interest since,
as a function of $q$,
they are in $\Z_{\geq 0}[q]$ (see \cref{prop:explicit-formula-Mq,prop:positive recurrence Ms}\footnote{This was conjectured by the authors and first proved by Matthew Bolan.}),
are equal modulo a power of $q$ to $q$-multinomial coefficients in certain cases (see \cref{cor:qMs m=2,lem:qMs-qmultinomial}), and appear to be unimodal (see \cref{rem:unimodality of qMs}).

\subsubsection{Evaluations via principal specializations}

The reader might recognize the
right-hand-side of \cref{intro:long-cycle-in-e-lambda-hecke-q} as a multiple
of the \emph{$q$-principal specialization} of $e_\lambda$.
It turns out this relationship holds more generally.
Indeed, we can use \cref{eq:intro-qMs}
to provide an explicit link between the coefficient of $T_{\sfc}$ in
$f(\Xi_n(q))$ and the $q$-principal specialization of $f$ for \emph{any}
homogeneous symmetric function $f$. This extends an observation of Irving
from the symmetric group algebra to the Hecke algebra.

\begin{maintheorem}[proved as \cref{thm: key lemma q reciprocity} and \cref{thm:other-symmetric-functions}]\label{thm:intro_reciprocity}
    Let $f$ be a homogeneous symmetric function of degree $n-1$. Then
    \begin{eqnarray}
        [T_{\sfc}] \, f(\Xi_n(q))
        & = &
        \frac{q^{-\binom{n}{2}}}{\qint{n}} \ ps_f(n; q)
    \end{eqnarray}
    where $ps_f(n; q)$ is the \emph{$q$-principal specialization} of $f$
    defined as
    \begin{equation*}
        ps_f(n; q) := f\big(1, q, q^2, \ldots, q^{n-1}\big).
    \end{equation*}
    Thus, for any partition $\lambda \vdash n - 1$, we have
      \begin{eqnarray}
        [T_{\sfc}] \,\, h_{\lambda}\big(\Xi_n(q)\big)
        & = &
        \frac{q^{-\binom{n}{2}}}{\qint{n}} \, \prod_{i = 1}^{\ell(\lambda)} \qbinom{n+\lambda_i-1}{\lambda_i}{q},
        \\\null
        [T_{\sfc}] \,\, p_{\lambda}\big(\Xi_n(q)\big)
        & = &
        \frac{q^{-\binom{n}{2}}}{\qint{n}} \, \prod_{i=1}^{\ell(\lambda)} \frac{1-q^{n\lambda_i}}{1-q^{\lambda_i}},
        \\\null
        [T_{\sfc}] \,\, s_{\lambda}\big(\Xi_n(q)\big)
        & = &
        \frac{q^{b(\lambda)-\binom{n}{2}}}{\qint{n}} \, \prod_{(i,j)\in \lambda} \frac{1-q^{n+j-i}}{1-q^{h(i.j)}},
    \end{eqnarray}
    where $b(\lambda)=\sum_{i=1}^{\ell(\lambda)} (i-1)\lambda_i$,
    and $h(i,j) = \lambda_i+\lambda'_j-i-j+1$ is the hook-length of the cell $(i,j)$ in $\lambda$.  
\end{maintheorem}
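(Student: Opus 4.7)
The plan is to reduce the general identity to Theorem~A part~(3) by a linearity argument. Both sides of
\[
[T_{\sfc}] \, f(\Xi_n(q)) = \frac{q^{-\binom{n}{2}}}{\qint{n}} \, ps_f(n;q)
\]
depend linearly on $f \in \Lambda^{n-1}$: the evaluation map $f \mapsto f(\Xi_n(q))$ is a ring homomorphism from $\Lambda$ into $\HH_n(q)$ because the $q$-Jucys--Murphy elements commute pairwise, and coefficient extraction $[T_{\sfc}](-)$ is linear; the principal specialization is itself a ring homomorphism from $\Lambda$ to $\mathbb{C}(q)$. Since $\{e_\lambda : \lambda \vdash n-1\}$ is a basis of the homogeneous component $\Lambda^{n-1}$, it suffices to verify the identity on $f = e_\lambda$.

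For $\lambda = (\lambda_1, \ldots, \lambda_m) \vdash n-1$, the left-hand side is $c_q(n;\lambda_1,\ldots,\lambda_m)$ by the definition in \cref{eq:c-q-reinterpreation}, and \cref{intro:long-cycle-in-e-lambda-hecke-q} rewrites this as
\[
c_q(n;\lambda_1,\ldots,\lambda_m) = \frac{q^{-\binom{n}{2}}}{\qint{n}} \prod_i q^{\binom{\lambda_i}{2}} \qbinom{n}{\lambda_i}{q}.
\]
On the other hand, the standard principal specialization $e_k(1,q,\ldots,q^{n-1}) = q^{\binom{k}{2}}\qbinom{n}{k}{q}$ multiplies across the parts of $\lambda$ to give $ps_{e_\lambda}(n;q) = \prod_i q^{\binom{\lambda_i}{2}}\qbinom{n}{\lambda_i}{q}$, matching the right-hand side. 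The substantive work lies entirely upstream in Theorem~A(3); once that input is granted, this basis-case reduction is essentially a direct rewriting, and I would expect the bulk of any difficulty to reside there rather than here.

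The three explicit formulas for $h_\lambda$, $p_\lambda$, and $s_\lambda$ then follow by specializing $ps_f(n;q)$ via standard identities: $h_k(1,q,\ldots,q^{n-1}) = \qbinom{n+k-1}{k}{q}$, $p_k(1,q,\ldots,q^{n-1}) = (1-q^{nk})/(1-q^k)$, and Stanley's hook-content formula $s_\lambda(1,q,\ldots,q^{n-1}) = q^{b(\lambda)}\prod_{(i,j)\in\lambda}(1-q^{n+j-i})/(1-q^{h(i,j)})$. Multiplying each by $q^{-\binom{n}{2}}/\qint{n}$ yields the stated evaluations, so the three displayed corollaries require no new ideas beyond the master identity and classical symmetric function combinatorics.
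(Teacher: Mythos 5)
Your proposal is correct and follows essentially the same route as the paper: reduce by linearity to the basis $\{e_\lambda\}_{\lambda\vdash n-1}$, invoke \cref{long-cycle-in-e-lambda-hecke-q} for the left-hand side, match it against the standard principal specialization of $e_\lambda$, and then derive the $h_\lambda$, $p_\lambda$, $s_\lambda$ formulas from the classical specialization identities. No substantive differences from the paper's proofs of \cref{thm: key lemma q reciprocity} and \cref{thm:other-symmetric-functions}.
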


\subsection{Other $q$-analogues}
Our work is, to the best of our knowledge, the first that studies $\HH_n(q)$-analogues of the factorization results in the symmetric group. However, there is an important, distinct $q$-analogue of the factorization results for $a(n;j)$ and $c(n;p_1, \ldots, p_m)$ in the finite general linear group
$\GL_n(\mathbb{F}_q)$ of $n\times n$ invertible matrices over the finite field
$\mathbb{F}_q$. Lewis--Reiner--Stanton in \cite{lewis2014reflection} adapted the definition of $a(n; j)$ to $\GL_n(\mathbb{F}_q)$. This led to analogous results for $c(n;p_1,\ldots,p_m)$ \cite{HLR,LM}. There is no known analogue of the monotone case $b(n;j)$ for $\GL_n(\mathbb{F}_q)$.

One benefit of their $\GL_n(\mathbb{F}_q)$ results is that they are true factorizations, in the original sense of the word. On the other hand, unlike our results, their formulas do not specialize to the classical $a(n;j)$ and $c(n;p_1, \ldots, p_m)$ at $q=1$.
However, there is a close link between $\HH_n(q)$ and $\GL_n(\mathbb{F}_q)$ similar in flavor to the Schur--Weyl duality between $\symm_n$ and $\GL_n(\C)$ (see e.g. \cite{halverson1999bitraces}). Thus it would be worthwhile and interesting to understand the relationship between our factorization results and those for $\GL_n(\mathbb{F}_q)$.

\subsection{Outline}

\cref{sec:background-rep-theory} will review some background on the
representation theory of $\symm_n$ and $\HH_n(q)$, including the Jucys--Murphy elements and their connections to the centers of the respective algebras $Z(\C[\symm_n])$ and $Z(\HH_n(q))$.
\cref{sec:main-results} presents the proof of \cref{thm:intro_q_summary_thm} and \cref{thm:intro_reciprocity}, with the
principal techniques we use highlighted in
\cref{ssec:isotypic-projectors-technique} and
\cref{ssec:evaluations-other-symmetric-functions}.
In \cref{sssec:interpretation-and-combinatorics-of-Ms} we study the combinatorics of the polynomials $\M^{n}_{(r_1,\ldots,r_m)}(q)$.
We end with with some final remarks in \cref{sec:final-remarks}.

\section*{Acknowledgments}
We are extremely grateful to Nadia Lafrenière for illuminating discussions and input at an earlier stage in this project. We thank Amarpreet Rattan for an inspiring talk about \cite{LothRattan} that kickstarted this project, and John Irving for pointing out the connection between principal specializations and factorizations of the symmetric group, which led to \cref{thm:intro_reciprocity}. We thank Matthew Bolan for communicating the recursion in \cref{prop:positive recurrence Ms}. We also thank François Bergeron, Guillaume Chapuy, Joel Kamnitzer, Joel Lewis,  Arun Ram,  Vic Reiner, Mark Skandera, Hunter Spink, and Nathan Williams for helpful comments and suggestions.  This work started from a working group at LACIM in Montréal and the research was facilitated by computer experiments using Sage~\cite{sagemath} and its algebraic combinatorics features
developed by the Sage-Combinat community~\cite{Sage-Combinat}.

The second named author is partially supported by the NSF MSPRF DMS-2303060. The fourth named author acknowledges the support of the Natural Sciences and Engineering Research Council of Canada (NSERC) funding reference number RGPIN-2024-06246, and was partially supported by the NSF grant DMS-2154019. The sixth named author acknowledges the support of the Natural Sciences and Engineering Research Council of Canada (NSERC) funding reference number RGPIN-2023-04476.

\section{Background: character theory and centers of $\kk[\symm_n]$ and $\HH_n(q)$}
\label{sec:background-rep-theory}

In this section we review some relevant aspects of the character theory of the symmetric group $\symm_n$, and its connections to the Jucys--Murphy elements and the center $Z(\kk[\symm_n])$ of the group algebra $\kk[\symm_n]$. We then give an analogous treatment for the type $A$ Iwahori--Hecke algebra $\HH_n(q)$. Henceforth, we assume that we are working in an algebraically closed field $\kk$ of characteristic zero.

\subsection{The symmetric group}

\subsubsection{Character theory of $\kk[\symm_n]$}\label{section:charactertheory for symmetric group}
The semisimple representation theory of the symmetric group is a beautiful and well-developed subject. We assume the reader has a knowledge of basic representation theory of finite groups, and review here only the aspects relevant for our study; see \cite{sagan2013symmetric, EC2} for excellent introductions to the representation theory of the symmetric group and symmetric functions, respectively.

In what follows, there are two useful perspectives to describe permutations in $\symm_n$.
\begin{enumerate}
    \item In \emph{cycle notation}, so that $w \in \symm_n$ is written as a product of disjoint cycles. The \emph{cycle type} of $w$ is the partition $\mu \vdash n$ determined by the size of these disjoint cycles.
    \item As a \emph{reduced expression} in the simple transpositions $s_i := (i\,\,i{+}1)$, so that $w = s_{i_1} \cdots s_{i_\ell}$. Note that the sequence $i_1, \ldots, i_\ell$ is not unique, but the number of simple transpositions in a minimal-length reduced expression for $w$ is; this is called the \emph{length} of $w$ and is written as $\ell(w)$.
\end{enumerate}

In the symmetric group, two elements are conjugates if and only if they have the same cycle type.
Thus, conjugacy classes are also indexed by integer partitions: the conjugacy class $\CC_\mu$ consists of all permutations with cycle type $\mu$.
We write
\begin{equation}
\label{eq:def-conj-cl-sum}
\cc_\mu:= \sum_{w \in \CC_\mu} w
\end{equation}
for the sum in $\kk[\symm_n]$ of the elements in the conjugacy class $\CC_\mu$, which we call a \emph{conjugacy class sum}.

\begin{example}
    \label{example:symm3-conjugacy-class-sums}
    For $n = 3$, there are $3$ partitions of $n$: $(1, 1, 1)$, $(2, 1)$ and $(3)$. 
    The conjugacy classes of $\symm_3$ are
    \begin{align*}
        \CC_{(1, 1, 1)} =  \left\{ \epsilon \right\} ,
        \qquad
        \CC_{(2, 1)} =  \left\{ (1\,2), (1\,3), (2\,3) \right\},
        \qquad \textrm{ and } \qquad
        \CC_{(3)} =  \left\{ (1\,2\,3), (1\,3\,2) \right\}.
    \end{align*}
    The corresponding conjugacy class sums are
    \begin{align*}
        \cc_{(1,1,1)}  = \epsilon,
        \qquad
        \cc_{(2,1)} =   (1\,2) + (2\,3) + (1\,3),
         \qquad \textrm{ and } \qquad
        \cc_{(3)} = (1\,2\,3) + (1\,3\,2).
    \end{align*}
\end{example}

The irreducible representations of the symmetric group are also indexed by integer partitions $\lambda \vdash n$.
We write $S^\lambda$ for the \emph{Specht module} corresponding to $\lambda$, which is a concrete realization of the corresponding irreducible representation.
We write the character of the Specht module $S^\lambda$ as $\chi^\lambda$.

Since characters are constant on conjugacy classes, the value of $\chi^\lambda(w)$ depends only on the cycle type of $w$, and so we will abuse notation and write $\chi^\lambda(\mu)$ to mean $\chi^\lambda(w)$ for any $w \in \CC_\mu$.
Linearly extending $\chi^\lambda$ to the group algebra $\kk[\symm_n]$, we have
\begin{equation}\label{eq:conj.class.sum}
\chi^\lambda(\cc_\mu) = \sum_{w \in \CC_\mu} \chi^\lambda(w) = (\# \CC_\mu) \, \chi^\lambda(\mu),
\end{equation}
Note that $\chi^\lambda(1^n)$ (i.e. the character of $S^\lambda$ evaluated at the identity of $\symm_n$) is the dimension of $S^\lambda$, which is $f^\lambda$, the number of \emph{standard Young tableaux of shape $\lambda$}. Alternatively, the \emph{hook length formula} gives another expression for $f^\lambda$. Let $\lambda'$ be the conjugate partition of $\lambda$. Then
\begin{equation} \label{eq:hooklength}
    f^\lambda = \frac{n!}{\prod_{(i,j)\in \lambda} h(i,j)},
\end{equation}
where $h(i,j) = \lambda_i+\lambda'_j-i-j+1$ is the hook-length of the cell $(i,j)$ in $\lambda$. 

More generally, it is a well-known and celebrated result is that the value $\chi^\lambda(\mu)$ can be computed combinatorially from the data of $\lambda$ and $\mu$; this is known as the \emph{Murnaghan--Nakayama rule} (see \cite[\S 7.17]{EC2} and the references therein).

The conjugacy class sum $\cc_\mu$ and characters $\chi^\lambda$ play an important role in the study of the center of the group algebra $\kk[\symm_n]$, which we shall denote by $Z(\kk[\symm_n])$. In particular, there are two notable bases of $Z(\kk[\symm_n])$. The first is given by the conjugacy class sums
\[ \cc:= \big \{ \cc_\mu \big \}_{\mu \vdash n}.\]
The second is a basis of orthogonal idempotents:
\[ \pi := \big \{ \pi_\lambda \big \}_{\lambda \vdash n},\]
meaning that 
\[
    \pi_\lambda \pi_\mu = \delta_{\lambda,\mu} \pi_\lambda,
    \qquad\text{where~}
    \delta_{\lambda, \mu} = \begin{cases}
        1, & \lambda = \mu,\\
        0, & \lambda \neq \mu.
    \end{cases}
\]
They are also \emph{complete}, so that $\sum_{\lambda \vdash n} \pi_\lambda = \epsilon$, the identity element of the algebra $\mathbf{k}[\symm_n]$.

The $\pi_\lambda$ can be expressed using the characters $\chi^\lambda$:
\begin{equation} \label{eq:isotypic-projectors}
    \pi_\lambda:= \frac{f^\lambda}{n!} \sum_{w \in \symm_n} \chi^{\lambda}(w) w^{-1}=
    \frac{f^\lambda}{n!} \sum_{w \in \symm_n} \chi^{\lambda}(w) w  =
    \frac{f^\lambda}{n!} \sum_{\mu \vdash n} \chi^\lambda(\mu) \cc_\mu.
\end{equation}

\begin{example}
    Let $\lambda = (1, 1, 1) = (1^3)$.
    The Specht module $S^{(1^3)}$ corresponds to the sign representation, whose character $\chi^{(1^3)}(w)$ is given by the parity of the length of $w$.
    Thus,
    \begin{align*}
    \cc_{(1^3)} &= \epsilon & \chi^{(1^3)}({(1^3)}) &= 1\\
    \cc_{(2,1)} &= (1\,2) + (2\,3) + (1\,3) & \chi^{(1^3)}({(2,1)}) &= -1\\
    \cc_{(3)} &= (1\,2\,3) + (1\,3\,2) & \chi^{(1^3)}({(3)}) &= 1.
    \end{align*}
    Then the corresponding orthogonal idempotent is:
    \[
        \pi_{(1^3)}
        =\tfrac{1}{3!} \bigg(\epsilon - \Big((12) + (23) + (13)\Big)  + \Big((123) + (132)\Big)\bigg).
    \]
\end{example}

\begin{remark}
In the language of representation theory, the idempotents $\pi_\lambda$ are known as \emph{isotypic projectors}, because they serve as projectors from any $\symm_n$-module $M$ to the copies of $S^\lambda$ inside of $M$. In other words,
\[ \pi_\lambda M \cong \left( S^\lambda \right)^{\oplus m_\lambda},\]
where $m_\lambda$ is the \emph{multiplicity} of $S^\lambda$ in $M$.
\end{remark}

The characters of $\symm_n$ provide elegant formulas to move between the $\cc$ and $\pi$ bases:
\begin{align}
    \pi_\lambda &= \frac{f^\lambda}{n!} \sum_{\mu \vdash n} \chi^{\lambda}(\mu) \cc_\mu \label{eq:pi.to.c}\\
    \cc_\mu &= \sum_{\lambda \vdash n} \frac{\chi^{\lambda}(\cc_\mu)}{f^\lambda} \pi_\lambda \label{eq:c.to.pi}
\end{align}
Of course, \cref{eq:pi.to.c} is essentially the definition of $\pi_\lambda$.
One way of interpreting \cref{eq:pi.to.c} and \cref{eq:c.to.pi} is as describing the rows and columns of the character table of $\symm_n$, respectively.
Those familiar with symmetric functions might observe that these change-of-basis formulas bear a striking resemblance to those for the Schur and power symmetric functions; this can be explained by studying the Frobenius characteristic map.

\subsubsection{The Jucys--Murphy elements}\label{section.JMforsymmetricgroup}

As discussed in the introduction, a key component of our story will be the \emph{Jucys--Murphy elements} of the symmetric group, which provide a link between representation theory and enumeration results.

\begin{definition}\label{def:JM}
For any $1 \leq k \leq n$, define $J_k \in \kk[\symm_n]$ to be the sum of the transpositions:
\[ J_k := (1 \, k) + (2 \, k) + \cdots + (k{-}1 \,\, k) \in \kk[\symm_n].\]
By convention, $J_1 = 0$.
\end{definition}

One surprising but important property of the $J_k$ is that they pair-wise commute \cite{jucys1974symmetric},
and thus $\kk[\symm_n]$ decomposes as a sum of the common eigenspaces of the $J_k$ acting by (left) multiplication\footnote{Note that everything can also be rephrased in terms of right multiplication, as long as one is consistent.}.
To describe these eigenspaces, we will use the combinatorial notion of \emph{content}, whose definition we build up below.

Given a partition $\lambda = (\lambda_1, \ldots, \lambda_\ell)$,
the \emph{Young diagram} (or \emph{Ferrers diagram}) of $\lambda$ is the collection of left-justified boxes where the $i$-th row has $\lambda_i$ boxes.
We adopt English notation: row $(i+1)$ is below row $i$. Write $\syt(\lambda)$ to be the set of \emph{standard Young tableaux} of shape $\lambda$, which are bijective labelings of the Young diagram of $\lambda$ with the set $[n]= \{1, \ldots, n \}$ such that labels are increasing across rows (west to east) and down columns (north to south). The collection of all standard Young tableaux of size $n$ is denoted $\syt_n$.

Given a Young diagram of shape $\lambda$, index the columns by $1$, $2$, $3$, and so on, from west to east, and the rows from north to south. Then for any $T \in \syt(\lambda)$ and $k \in [n]$, let $\row_k(T)$ and $\col_k(T)$ denote the row and column index, respectively, of the box labeled $k$ in $T$.

\begin{definition}
For any partition $\lambda \vdash n$, standard Young tableau $T \in \syt(\lambda)$, and $k \in [n]$, the \emph{content} of $T$ at $k$ is
\[ \cont_k(T)= \col_k(T) - \row_k(T).\]
The \emph{multiset of contents of $T$} is
\[ \cont(T) = \big\{\!\!\big\{ \cont_k(T) : 1 \leq k \leq |T| \big\}\!\!\big\}, \]
and we define
\[ \cont(\lambda) := \cont(T) \text{~for any $T \in \syt(\lambda)$.} \]
\end{definition}

\begin{example}
    \label{ex:contents}
    Below we show the Young diagram of the partition $\lambda = (5,4,2,2,1)$. Each box is labeled by the content at that box:
    \[
    \YSF
    \YBF{0}{0}{0}\YBF{1}{0}{1}\YBF{2}{0}{2}\YBF{3}{0}{3}\YBF{4}{0}{4}
    \YBF{0}{-1}{-1}\YBF{1}{-1}{0}\YBF{2}{-1}{1}\YBF{3}{-1}{2}
    \YBF{0}{-2}{-2}\YBF{1}{-2}{-1}
    \YBF{0}{-3}{-3}\YBF{1}{-3}{-2}
    \YBF{0}{-4}{-4}
    \YF
    \]
    Thus in this case,
    \[
        \cont\big((5,4,2,2,1)\big) = \big\{\!\!\big\{ -4,-3,-2,-2,-1,-1,0,0,1,1,2,2,3,4 \big\}\!\!\big\}.
    \]
\end{example}

Recall that the $J_k$ pairwise commute, and thus share an eigenbasis. We will be interested in building a family of elements $\pi_T$ in $\kk[\symm_n]$ with the following properties:
\begin{enumerate}
\item they are indexed by standard Young tableaux $T$ of size $n$;
\item they project $\kk[\symm_n]$ onto the simultaneous eigenspaces of the elements $J_1, \ldots, J_n$ acting on $\kk[\symm_n]$;
\item and the element indexed by $T$ is an eigenvector for $J_k$ with corresponding eigenvalue $\cont_k(T)$.
\end{enumerate}

In fact, the $\pi_T$ can be defined so that they satisfy the above properties by construction, using a process called Lagrange interpolation. To avoid unnecessary technicalities, we will not define the $\pi_T$ here, but refer the reader instead to \cite[Definition 2.10]{axelrod2024spectrum}.
Instead, we state here their properties, which are far more relevant.
\begin{proposition}\label{lem:interaction-of-Jucys--Murphys-with-pts}
The elements $\pi_T \in \kk[\symm_n]$ for $T \in \syt_n$ satisfy the following properties:
\begin{enumerate}
     \item The collection of $\pi_{T}$ for $T \in \syt_n$
form a family of mutually orthogonal, complete idempotents:
     \[ \pi_{T} \  \pi_{Q} = \delta_{T, Q} \pi_{T} \quad \quad \textrm{ and } \quad \quad \sum_{T \in \syt_n} \pi_{T} = 1.\]
    \item The $\pi_T$ refine the isotypic projectors $\pi_\lambda$:
    \[ \pi_\lambda = \sum_{T \in \syt(\lambda)} \pi_T. \]
    \item Each $\pi_T$ is a simultaneous (left) eigenvector for $J_k$ for $1 \leq k \leq n$:
    \[  J_k \ \pi_T   = \cont_k(T) \ \pi_T.\]
\end{enumerate}
\end{proposition}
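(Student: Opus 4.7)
The plan is to realize each $\pi_T$ as a polynomial in the commuting operators $J_1,\ldots,J_n$ via Lagrange interpolation on their joint spectrum, and then to deduce all three claims directly from the construction. The crucial input is the Okounkov--Vershik description of the spectrum of the Jucys--Murphy elements: they commute pairwise, and when acting on $\kk[\symm_n]$ by (say) left multiplication they are simultaneously diagonalizable with joint spectrum
\[
    \big\{\, \big(\cont_1(T),\ldots,\cont_n(T)\big) : T \in \syt_n \,\big\},
\]
with distinct standard Young tableaux producing distinct content vectors.

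First I would establish this spectral picture. Using Young's seminormal form for the Specht module $S^\lambda$, one obtains a basis indexed by $T \in \syt(\lambda)$ on which each $J_k$ acts diagonally with eigenvalue $\cont_k(T)$. Combined with the decomposition $\kk[\symm_n] \cong \bigoplus_{\lambda \vdash n} (S^\lambda)^{\oplus f^\lambda}$ of the regular representation, this pins down the joint spectrum as above. The delicate point, separating joint eigenspaces by their content vector, is precisely the theorem of Okounkov--Vershik, which I would cite rather than reprove.

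With the spectrum in hand, I would apply Lagrange interpolation in commuting variables $x_1,\ldots,x_n$: for each $T \in \syt_n$, choose a polynomial $p_T(x_1,\ldots,x_n)$ that takes value $1$ at the content vector of $T$ and value $0$ at every other content vector appearing in the spectrum. Setting $\pi_T := p_T(J_1,\ldots,J_n)$, the identities $p_T p_Q = \delta_{T,Q} p_T$, $\sum_T p_T = 1$, and $x_k p_T = \cont_k(T) p_T$ hold pointwise on the joint spectrum; since $J_1,\ldots,J_n$ are simultaneously diagonalizable these become operator identities, giving claims (1) and (3). For claim (2), recall from \eqref{eq:isotypic-projectors} that $\pi_\lambda$ is the central projector onto the $\lambda$-isotypic component $(S^\lambda)^{\oplus f^\lambda}$. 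This component is the direct sum of the joint $(J_k)$-eigenspaces indexed exactly by $T \in \syt(\lambda)$, so $\sum_{T \in \syt(\lambda)} \pi_T$ is the projector onto the same subspace and must equal $\pi_\lambda$.

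The principal obstacle is the injectivity of the map $T \mapsto (\cont_1(T),\ldots,\cont_n(T))$ together with the identification of the joint spectrum; everything else reduces to routine manipulation of Lagrange interpolation in commuting diagonalizable operators. This separation-by-contents is the core of Okounkov--Vershik theory, and invoking it as a black box seems by far the cleanest route.
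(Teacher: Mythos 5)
Your proposal is correct and follows essentially the route the paper takes: the paper defines the $\pi_T$ by Lagrange interpolation at the joint content spectrum of the Jucys--Murphy elements (deferring the construction to the cited literature) so that all three properties hold by construction, exactly as you argue. The inputs you invoke as black boxes---simultaneous diagonalizability of the $J_k$ on $\kk[\symm_n]$ with joint spectrum the content vectors of standard Young tableaux, and injectivity of $T \mapsto \big(\cont_1(T),\ldots,\cont_n(T)\big)$---are the same facts from Young's seminormal form and Okounkov--Vershik theory that the paper relies on.
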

The $\{ \pi_T \}_{T \in \syt(\lambda)}$ form a subset of the joint $\{ J_1, \ldots, J_n \}$-eigenbasis of $\kk[\symm_n]$ known as the \emph{Young seminormal forms} (see \cite{garsia2020young, jucys1974symmetric,murphy1981new,young1977quantitative}).

\begin{example}
Recall that $J_1 = 0$, so $J_1 \pi_T = 0$ for all $T \in \syt_n$. This matches the formula above since $\cont_1(T) = 0$ for all $T \in \syt_n$.
Below we show the action of the remaining Jucys--Murphy elements on the $\pi_T$ indexed by the two standard Young tableaux of shape $(2,1)$.

Let
\begin{align*}
    T =  \YSF\YBF{0}{0}{1}\YBF{1}{0}{2}\YBF{0}{-1}{3}\YF \quad \quad \quad \quad Q = \YSF\YBF{0}{0}{1}\YBF{1}{0}{3}\YBF{0}{-1}{2}\YF
\end{align*}
Then
\begin{align*}
    J_2 \cdot \pi_T &= \cont_2(T) \cdot \pi_T = \mathbf{1} \cdot \pi_T \qquad & J_2 \cdot \pi_Q = \cont_2(Q) \cdot \pi_Q = \mathbf{-1} \cdot \pi_Q \\
    J_3 \cdot \pi_T &= \cont_3(T) \cdot \pi_T = \mathbf{-1} \cdot \pi_T \qquad & J_3 \cdot \pi_Q = \cont_3(Q) \cdot \pi_Q = \mathbf{1} \cdot \pi_Q \\
\end{align*}
\end{example}

\subsubsection{Symmetric functions and the center}

The center of the symmetric group algebra and the Jucys--Murphy elements come together beautifully via symmetric functions.

We assume basic familiarity with the theory of symmetric functions.
Let $\Lambda$ be the ring of symmetric functions over $\kk$ and $\Lambda_n$ be the subspace of homogeneous symmetric functions of degree $n$.
We will frequently restrict a symmetric function $f \in \Lambda$ to a finite set of variables $x_1, \ldots, x_n$,
in which case we write
\[ f(x_1, \ldots, x_n):= f(x_1, \ldots, x_n, 0, 0, \ldots). \]
In this case, $f(x_1, \ldots, x_n)$ is a symmetric polynomial.

We will be interested in the following bases of $\Lambda$, following the notation from \cite[Chapter 7]{EC2}. Let $m_{\lambda}$, $e_k$, $h_k$, $p_k$, and $s_\lambda$ denote the {\em monomial}, {\em elementary}, {\em complete homogeneous}, {\em power sum}, and {\em Schur} symmetric functions, respectively.

For our purposes, the key connection between symmetric functions and the Jucys--Murphy elements $J_k$ is the following: given $f \in \Lambda$, consider the evaluation of $f$ at the Jucys--Murphy elements:
\begin{equation}
    f(\Xi_n):= f\big(J_1, \ldots, J_n\big) \in \kk[\symm_n]. \label{eq:sym.evaluationmap}
\end{equation}
We call such an evaluation the \emph{content evaluation of $f$}, for reasons that will be clear shortly.

The content evaluation has the following powerful characterization:
\begin{theorem}[Jucys \cite{jucys1974symmetric}] \label{thm:isomorphism.center.evaluation}
For any $n \geq 1$, the element $z \in \kk[\symm_n]$ is in the center $Z(\kk[\symm_n])$ if and only if $z$ can be expressed as
    $ z = f(\Xi_n) $
    for some $f \in \Lambda$.
    In other words,
    \[Z(\kk[\symm_n]) = \big\{ f(\Xi_n) \,:\, f \in \Lambda \big\}.\]
\end{theorem}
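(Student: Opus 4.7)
The plan is to prove the two inclusions of the set equality $Z(\kk[\symm_n]) = \{f(\Xi_n) : f \in \Lambda\}$ separately, using the joint eigenbasis $\{\pi_T\}_{T \in \syt_n}$ provided by \cref{lem:interaction-of-Jucys--Murphys-with-pts} as the bridge between the symmetric function side and the center.

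For the inclusion $\{f(\Xi_n) : f \in \Lambda\} \subseteq Z(\kk[\symm_n])$, I would use pairwise commutativity of the $J_k$'s (so that $f(\Xi_n)$ is well-defined by substitution) together with the eigenvalue relation $J_k\,\pi_T = \cont_k(T)\,\pi_T$. Since $f$ is symmetric, the resulting scalar $f(\cont_1(T), \ldots, \cont_n(T))$ depends only on the multiset $\cont(T)$, which in turn depends only on the shape of $T$. Combining this with completeness $\sum_T \pi_T = 1$ and grouping by shape yields the explicit formula
\begin{equation*}
f(\Xi_n) \;=\; \sum_{\lambda \vdash n} f\bigl(\cont(\lambda)\bigr)\, \pi_\lambda,
\end{equation*}
which lies in the center since each isotypic projector $\pi_\lambda$ does.

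For the reverse inclusion, the displayed formula shows that it suffices to realize each $\pi_\mu$ ($\mu \vdash n$) as $f_\mu(\Xi_n)$ for some $f_\mu \in \Lambda$, since $\{\pi_\mu\}_{\mu \vdash n}$ is a basis of $Z(\kk[\symm_n])$. To this end, I would invoke the classical combinatorial fact that the content multisets $\{\cont(\lambda)\}_{\lambda \vdash n}$ are pairwise distinct. Granting this, the evaluation map $f \mapsto (f(\cont(\lambda)))_{\lambda \vdash n}$ factors through the symmetric polynomial ring $\kk[x_1, \ldots, x_n]^{\symm_n} = \kk[e_1, \ldots, e_n]$, and via Lagrange interpolation on the distinct points $(e_1(\cont(\lambda)), \ldots, e_n(\cont(\lambda))) \in \kk^n$ one can produce a polynomial in $e_1, \ldots, e_n$ achieving any prescribed values at these points; call the resulting symmetric polynomial $g_\mu$, chosen so that $g_\mu(\cont(\lambda)) = \delta_{\lambda, \mu}$. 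Lifting $g_\mu$ to $\Lambda$ by reading the same polynomial expression in the infinite-variable elementary symmetric functions produces the desired $f_\mu$, and the formula above gives $f_\mu(\Xi_n) = \pi_\mu$.

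The main obstacle is establishing that distinct partitions of $n$ have distinct content multisets; everything else is formal or routine interpolation. This is classical: the multiplicity of an integer $c$ in $\cont(\lambda)$ equals the length of the $c$-th diagonal of $\lambda$, and these diagonal lengths determine $\lambda$ (for instance, $\lambda_1 = 1 + \max \cont(\lambda)$, after which one recurses on $\lambda$ with its first row removed). With this lemma in hand, the rest of the argument is a direct application of the fundamental theorem of symmetric polynomials and Lagrange interpolation.
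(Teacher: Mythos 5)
Your proof is correct. Note that the paper does not prove this statement itself --- it is quoted from Jucys with a citation --- so there is no in-paper argument to compare against; but your forward inclusion is precisely the computation the paper carries out (in the $q$-deformed setting) in \cref{lemma:isoprojtechnique}, namely $f(\Xi_n)=\sum_{\lambda\vdash n} f(\cont(\lambda))\,\pi_\lambda$ via the eigenvalue relations of \cref{lem:interaction-of-Jucys--Murphys-with-pts}, and your reverse inclusion is the standard surjectivity argument. The two ingredients you rely on both check out: distinct partitions of $n$ do have distinct content multisets (your diagonal-length/recursion argument is fine), and since $\{f(\Xi_n):f\in\Lambda\}$ is the image of a linear map it suffices to hit each basis element $\pi_\mu$ of the center, which your interpolation through $\kk[e_1,\dots,e_n]$ accomplishes because the tuples $\bigl(e_1(\cont(\lambda)),\dots,e_n(\cont(\lambda))\bigr)$ are pairwise distinct (the $e_i$ of a size-$n$ multiset determine the multiset) and $\kk$ is infinite. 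No gaps.
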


Though beyond the scope of our current work, there is an elegant method of studying the center of all the symmetric group algebras simultaneously via the \emph{Farahat--Higman algebra}; see Farahat--Higman's work \cite{farahat1959centres}, as well as a more contemporary discussion in \cite[\S 3]{ryba2021stable}. The content evaluation map yields an isomorphism between $\Lambda$ and the Farahat--Higman algebra. Work of Corteel--Goupil--Schaeffer describes a basis of $\Lambda$ whose elements have content evaluations corresponding to a basis of this algebra \cite{CorteelSchaeffer}.
\begin{example}[$e^j_1(\Xi_n)$]\label{ex:e1evaluation}
Suppose $f = e_1$. Then
\[ e_1(\Xi_n) = e_1(J_1, \ldots, J_n) = \sum_{1 \leq i < j \leq n} (i\,j) = \cc_{2,1^{n-2}}. \]

Now consider
\[ e_1^{n-1}(\Xi_n) = \big( e_1(\Xi_n) \big)^{n-1} =
\Big( \hskip3pt \smashoperator{\sum_{1\leq i<j\leq n}} \ (i\,j) \hskip3pt \Big)^{n-1}. \]
Since every term in the above expression is a product of $n-1$ transpositions,
the coefficient of $\sfc$ is precisely number of ways to obtain  $\sfc$ as a product of $n-1$ transpositions---in other words, $a(n;n-1)$ from the introduction.

More generally,
\begin{align}\label{eq:a in sym alg}
  e_1^k(\Xi_n) =  \Big( \hskip3pt \smashoperator{\sum_{1\leq i<j\leq n}} \ (i\,j) \hskip3pt \Big)^k,
\end{align}
and $a(n;k)$ is the coefficient of $\sfc$ in \cref{eq:a in sym alg}.
\end{example}

\begin{example}[$h_k(\Xi_n)$]
Now consider $f = h_j$. We have
\begin{align}
    h_j(\Xi_n) =  \smashoperator{\sum_{\substack{
                    2\leq t_1\leq \cdots \leq t_j\leq n \\
                    s_1<t_1,\ldots,s_j<t_j
            }}
        }
        \hskip5pt
        (s_1\,t_1) \cdots (s_j\,t_j) \label{eq: b in sym alg}
\end{align}
Thus again the coefficient of $\sfc$ in \cref{eq: b in sym alg} is precisely the number of ways to write $\sfc$ as a product of $j$ transpositions $\tau_1\cdots \tau_{j}$ such that if $\tau_i=(a_i\;b_i)$ with $a_i < b_i$, then $b_1\leq b_2 \leq \cdots \leq b_{j}$. In other words, this coefficient is $b(n;j)$, the number of monotone factorizations of $\sfc$ into $j$ transpositions.
\end{example}

\begin{example}[$e_{(p_1,\ldots,p_m)}(\Xi_n)$]
Consider $f = e_k$.
Using a similar argument to \cref{ex:e1evaluation}, Jucys showed in \cite[\S 3]{jucys1974symmetric} that
\begin{equation} \label{eq:ekevaluation}
    e_k(\Xi_n) =  \sum_{\substack{\lambda  \vdash n \\k = n-\ell(\lambda)}} \ \cc_\lambda \in \kk[\symm_n].
\end{equation}
It follows that for positive integers $(p_1, \ldots, p_m)$,
\begin{align}
    e_{(p_1, \ldots, p_m)}(\Xi_n)
    & = e_{p_1}(\Xi_n) \ e_{p_2}(\Xi_n) \cdots e_{p_m}(\Xi_n) \\
    & =
    \hskip-10pt
    \sum_{\substack{\mu^{(i)} \vdash n \\ \ell(\mu^{(i)}) = n-p_i}}
    \hskip-10pt
        {\cc}_{\mu^{(1)}} \cdots {\cc}_{\mu^{(m)}}. \label{eq: c in sym alg}
\end{align}
Since the sum is over all partitions $\mu^{(i)}$ of $n$ of length $n-p_i$,
the coefficient of $\sfc$ in \cref{eq: c in sym alg} is precisely $c(n; p_1, \ldots, p_m)$, the number of ways to write $\sfc$ as a product of permutations $\pi_1 \cdots \pi_m$ where $\pi_i$ has $n-p_i$ cycles.
\end{example}

\subsubsection{Generating functions for $e_k$ and $h_k$.}
We will use repeatedly the following well-known generating function expressions for the elementary and homogeneous symmetric functions:
\begin{align}
    E(t) &= \sum_{k \geq 0} e_k t^k = \prod_{i=1}^{\infty} (1+x_i t) \label{eq:egenfunction}\\
    H(t) &=  \sum_{k \geq 0} h_k t^k = \prod_{i=1}^{\infty} \frac{1}{1-x_i t}.\label{eq:hgenfunction}
\end{align}
Note that there is a duality between \cref{eq:egenfunction} and \cref{eq:hgenfunction}: namely
\[ E(-t) H(t) = 1 = E(t) H(-t).\]

We will also consider the analogous generating functions for $e_k(\Xi_n)$:
\begin{align}
E_{n}(t):=& \prod_{i=1}^{n} \big( 1 + J_i t\big)  = \sum_{k \geq 0} e_k\big( \Xi_n \big) t^k  \label{eqn:En} \\
\tilde{E}_{n}(t):=&  \prod_{i=1}^{n} \big( t + J_i \big)  = \sum_{k \geq 0} e_k\big( \Xi_n \big) t^{n-k} \label{eqn:Etilden}
\end{align}
By \cref{eq:ekevaluation}, we thus have
\begin{align*}
    E_n(t) =& \sum_{\lambda \vdash n} \cc_\lambda  \, t^{n-\ell(\lambda)}, \\
    \tilde{E}_n(t) =& \sum_{\lambda \vdash n} \cc_\lambda \,  t^{\ell(\lambda)}.
\end{align*}
We will generalize $E_n(t)$ and $\tilde{E}_n(t)$ to the Hecke algebra in \cref{subsec:hecke-algebra}, where they will play an important role in our analysis.

\subsection{The Hecke algebra}
\label{subsec:hecke-algebra}
We now turn to the Hecke algebra $\HH_n(q)$, which deforms $\kk[\symm_n]$ by introducing a parameter $q$. We will often treat $q$ as an (invertible) indeterminate, and we will sometimes specialize it to a specific value in $\kk$ when relevant.
\begin{definition}\label{def:heckealgebra}
    The \emph{type $A$ Iwahori--Hecke algebra $\HH_n(q)$} is the associative $\kk$-algebra generated by elements $T_1, \ldots, T_{n-1}$ subject to the following relations:
    \begin{enumerate}
        \item $(T_i - q)(T_i + 1) = 0$, or equivalently, $T_i^2 = (q-1)T_i + q$ for $1 \leq i \leq n-1$; \smallskip
        \item $T_i T_j = T_j T_i$ for $1 \leq i \neq j \leq n-1$ if $|i-j| \geq 2$; \smallskip
        \item $T_i T_{i+1} T_i = T_{i+1} T_i T_{i+1}$ for $1 \leq i \leq n-2$.
    \end{enumerate}
\end{definition}
When $q\neq 0$ or a root of unity, the algebra $\HH_n(q)$ is semisimple. This is the context that we will be interested in for the remainder of this paper. Note that setting $q=1$ recovers the group algebra $\kk[\symm_n]$, thought of as the associative $\kk$-algebra generated by adjacent transpositions $s_i = (i\,\,i{+}1)$ for $1 \leq i \leq n-1$.

If $w = s_{i_1} s_{i_2} \cdots s_{i_\ell}$ is a reduced expression for $w \in \symm_n$, we define
\[ T_w := T_{i_1} T_{i_2} \cdots T_{i_\ell}.\]
It turns out that $T_w$ is independent of the reduced expression for $w$, and that $\{ T_w : w \in \symm_n \}$ is a linear basis of $\HH_n(q)$, which we will refer to as the \emph{natural basis of $\HH_n(q)$}. 

In what follows, we will see that the character theory of $\HH_n(q)$ is determined by elements $T_w$ for which $w$ is of minimal length in its conjugacy class. We pick a unique such element $w^*_\lambda$ for each $\CC_\lambda$, and write
\begin{equation*}
     T_{\lambda} := T_{w^*_\lambda}.
\end{equation*}
(The choice of $w^*_\lambda$ does not matter, but we make a consistent choice to simplify notation and exposition.)

\begin{remark}
Relation (1) in \cref{def:heckealgebra} is not the only convention for defining $\HH_n(q)$. More generally, one can define the type $A$ Iwahori--Hecke algebra $\HH_n(q_1,q_2)$ subject to the relation
\begin{equation} \label{eq:moregeneralheckedef}
(T_i-q_1)(T_i-q_2) = 0.
\end{equation}
The following specializations of $q_1, q_2$, yield isomorphic algebras:
\begin{align}
    q_1 = q & \quad \quad q_2 = -q^{-1} \label{presentation:symmetrized} \\
    q_1 = q & \quad \quad q_2 = -1\label{presentation:traditional} \\
    q_1 = q^2 & \quad \quad q_2 = -1.\label{presentation:traditionalsquare}
\end{align}
\cref{def:heckealgebra} uses \cref{presentation:traditional}, however \cref{presentation:symmetrized} is sometimes more useful in Kazhdan--Lusztig theory.

In general, the presentation chosen will change the formulas one obtains, but one can always move from one to the other without too much difficulty.
\end{remark}
\subsubsection{Character theory for $\HH_n(q)$}
In this section, we will describe the $\HH_n(q)$ analogues of the symmetric group character theory and properties of $Z(\kk[\symm_n])$ discussed in \cref{section:charactertheory for symmetric group}.

When $\HH_n(q)$ is semisimple, its representation theory bears many important similarities to the symmetric group. For example, the irreducible representations of $\HH_n(q)$ are indexed by integer partitions of $n$, and these modules (also typically denoted by $S^\lambda$) have the same branching properties as those of $\symm_n$; see \cite{mathas1998hecke} for an in-depth discussion. There is a Frobenius characteristic map in this case as well; see \cite{WanWang}. 

However, we will see that even in the semisimple case, there are nuances to the character theory for $\HH_n(q)$ that make it far more complicated than the analogous character theory for $\symm_n$. 

Recall that conjugacy classes of the symmetric group depend only on the cycle type of a permutation and can thus be indexed by partitions.
Moreover, the conjugacy class sums form a natural basis for the center $Z(\kk[\symm_n])$.
In the case of the Hecke algebra, the dimension of $Z(\HH_n(q))$ is also given by the number of partitions $\lambda$ of $n$. However, the analogous sum over $T_w$ for $w \in \CC_\lambda$ is not necessarily central.

Instead, the correct analogue of the conjugacy class sum basis $\{\cc_\mu\}$ is the \emph{Geck--Rouquier basis}
\[ \big \{ \Gamma_{\mu} \big \}_{\mu \vdash n} \]
which is a deformation of $\cc_{\mu}$
in the sense that each $\Gamma_{\mu}|_{q=1} = \cc_{\mu}$. In general, an explicit expression for $[T_w]\Gamma_{\mu}$, the coefficient of $T_w$ in $\Gamma_{\mu}$, is not known. However, when $w$ is of minimal length in its conjugacy class, the coefficient $[T_w]\Gamma_{\mu}$ is well understood, as we explain below.

Geck--Rouquier prove in \cite[\S 5]{geck1997centers} the following characterization of the $\{ \Gamma_\mu \}$, which we shall use as our definition; see also \cite{meliot2010products}.

\begin{theorem}[Geck--Rouquier] \label{thm:GRbasis}
There is a unique family of elements $\{ \Gamma_{\mu} \}_{\mu \vdash n} $ in $\HH_n(q)$ characterized by the following conditions: \smallskip
    \begin{enumerate}
        \item $\Gamma_{\mu}$ is in $Z(\HH_n(q))$ \smallskip
        \item $\Gamma_{\mu}|_{q=1} = \cc_{\mu}$, and  \smallskip
        \item writing $\Gamma_{\mu}$ in the natural basis of $\HH_n(q)$:
        \[ \Gamma_{\mu} = \sum_{w \in \symm_n} a_{\mu,w} \  T_w,\]
        we have that
        \[ a_{\mu,w} = \begin{cases} 1 &
        \textrm{if $w$ is of minimal length in its conjugacy class and has cycle type } \mu \\
        0 &
        \textrm{if $w$ is of minimal length in its conjugacy class and has cycle type } \lambda \neq \mu.\\
        \end{cases}\]
    \end{enumerate}
    In fact, these elements form a basis of $Z(\HH_n(q))$.
\end{theorem}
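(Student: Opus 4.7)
The plan is to reduce the entire statement to a single injectivity fact about $Z(\HH_n(q))$, from which uniqueness, existence, and the basis claim all follow at once. For each $\mu \vdash n$, let $w^*_\mu$ denote a fixed minimal-length element of $\CC_\mu$, and consider the linear map
\[
\Psi \colon Z(\HH_n(q)) \longrightarrow \kk^{p(n)},
\qquad
z \longmapsto \bigl([T_{w^*_\mu}] \, z\bigr)_{\mu \vdash n},
\]
where $p(n)$ is the number of partitions of $n$. Both source and target have dimension $p(n)$: the target by definition, the source because $\HH_n(q)$ is semisimple with irreducibles indexed by partitions of $n$. It therefore suffices to prove that $\Psi$ is injective. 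This is the technical heart of the result, and rests on the Geck--Pfeiffer theory of minimal-length elements in Coxeter conjugacy classes: for every $w \in \symm_n$ there exist ``class polynomials'' $f_{w,\mu}(q) \in \kk[q,q^{-1}]$ such that $\chi^\lambda(T_w) = \sum_\mu f_{w,\mu}(q) \, \chi^\lambda(T_{w^*_\mu})$ for every irreducible $\HH_n(q)$-character $\chi^\lambda$. Combined with the invertibility of the $q$-character table $\bigl(\chi^\lambda(T_{w^*_\mu})\bigr)_{\lambda,\mu}$ (a $q$-deformation of the classical character table of $\symm_n$) and the fact that irreducible characters separate points of the center, this forces $\Psi$ to be injective.

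Given the injectivity of $\Psi$, uniqueness is immediate: if $\{\Gamma_\mu\}$ and $\{\Gamma'_\mu\}$ both satisfy conditions (1) and (3), then $\Psi(\Gamma_\mu - \Gamma'_\mu) = 0$, forcing $\Gamma_\mu = \Gamma'_\mu$. A corollary of the Geck--Pfeiffer theory is that for any central $z$, the coefficient $[T_w] z$ depends only on the conjugacy class of $w$ when $w$ is of minimal length, so condition (3) is equivalent to the concise statement $\Psi(\Gamma_\mu) = e_\mu$, the $\mu$-th standard basis vector of $\kk^{p(n)}$. For existence, I would simply define $\Gamma_\mu$ to be the unique preimage of $e_\mu$ under $\Psi$, which exists because $\Psi$ is a bijection. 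Condition (1) is then built in, and condition (3) holds by the corollary just quoted.

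It remains to verify condition (2) and to deduce the basis statement. The construction is defined over $\kk[q,q^{-1}]$ (or a localization on which $\HH_n(q)$ remains semisimple), and specialization at $q = 1$ commutes with $\Psi$, sending $T_{w^*_\mu}$ to $w^*_\mu \in \CC_\mu \subset \symm_n$ and $Z(\HH_n(q))$ to $Z(\kk[\symm_n])$. At $q = 1$, the classical conjugacy class sum $\cc_\mu$ is central with $[w^*_\nu]\cc_\mu = \delta_{\mu,\nu}$ on class representatives, so the $q=1$ version of uniqueness gives $\Gamma_\mu|_{q=1} = \cc_\mu$. Finally, since $\Psi$ is a linear isomorphism carrying $\{\Gamma_\mu\}_{\mu \vdash n}$ bijectively onto the standard basis of $\kk^{p(n)}$, the family $\{\Gamma_\mu\}$ is a basis of $Z(\HH_n(q))$. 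The main obstacle, and the step I would cite rather than reprove, is the injectivity of $\Psi$: this ultimately depends on the Geck--Pfeiffer result that any two minimal-length elements of a conjugacy class in $\symm_n$ are connected by cyclic shifts, a delicate inductive argument on Coxeter length; once it is in hand, everything else is formal.
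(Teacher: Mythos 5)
First, a point of reference: the paper does not prove this statement at all — it quotes it from Geck--Rouquier \cite[\S 5]{geck1997centers} and explicitly uses it as the \emph{definition} of the $\Gamma_\mu$. So there is no in-paper proof to compare against; your proposal has to stand on its own, and its overall architecture (reduce everything to invertibility of the map $\Psi$ recording coefficients on minimal-length class representatives, with Geck--Pfeiffer's minimal-length/class-polynomial theory as the imported input) is indeed how this result is established in the literature. Uniqueness and the basis claim do follow formally once $\Psi$ is known to be bijective, and your use of the corollary that $[T_w]z$ is constant on minimal-length elements of a class (for central $z$) to upgrade $\Psi(\Gamma_\mu)=e_\mu$ to the full condition (3) is correct.

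Two steps are softer than you present them. (a) Your derivation of the injectivity of $\Psi$ is missing a bridge. The class polynomials tell you that \emph{trace functions} (in particular the irreducible characters) are determined by their values on the $T_{w^*_\mu}$; to convert this into the statement that a \emph{central element} is determined by its coefficients on the $T_{w^*_\mu}$, you need the duality between $Z(\HH_n(q))$ and trace functions given by the symmetrizing form $\tau$ (equivalently, the explicit expansion of $\pi_\lambda$ in the $T_w$-basis, \cref{prop:projectorformula}): the coefficient $[T_{w^{-1}}]z$ equals $q^{-\ell(w)}\tau(zT_w)$, and it is the trace function $h\mapsto\tau(zh)$ to which the class-polynomial expansion applies. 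Without this, "invertibility of the $q$-character table plus characters separate the center" does not by itself yield injectivity of $\Psi$, since $\chi^\lambda(z)$ involves the coefficients of $z$ on \emph{all} $T_w$, not just the minimal-length ones. (b) Condition (2) is where the real content hides. Defining $\Gamma_\mu:=\Psi^{-1}(e_\mu)$ over $\kk(q)$ is cheap, but to specialize at $q=1$ you must show $\Gamma_\mu$ has no pole there; this is not automatic from "the construction is defined over $\kk[q,q^{-1}]$" (inverting $\Psi$ can introduce denominators, and integrality of the $\Gamma_\mu$ is a substantive part of what Geck--Rouquier prove). A fillable route: the $\pi_\lambda$ of \cref{prop:projectorformula} give a basis of $Z(\HH_n(q))$ whose coefficients are regular at $q=1$ and which specializes to the idempotent basis of $Z(\kk[\symm_n])$; the matrix of $\Psi$ in that basis is then regular at $q=1$ with invertible specialization (the $q=1$ map is trivially injective on class sums), so $\Psi^{-1}$ is regular at $q=1$ and your specialization argument goes through. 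As written, though, this step is asserted rather than proved, and it is the one place where your sketch would need genuine additional work.
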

We will refer to the basis elements $\Gamma_\mu$ as the \emph{Geck--Rouquier basis}.
Note that condition (3) does not impose any conditions on the coefficients $a_{\mu,w}$ when $w$ is not of minimal length in its conjugacy class.
\begin{example}
We show below the Geck--Rouquier basis of $Z(\HH_4(q))$.
Elements $T_w$ are written below with $w$ in cycle notation; the terms in blue correspond to those $T_w$ for which $w$ is of minimal length in the appropriate conjugacy class, and thus their coefficient is $1$.
\begin{align*}
    \Gamma_{(4)} & =
    \begin{multlined}[t]
        (1 - q^{-1})\Big((1-q^{-2})T_{(14)(23)} + (1-q^{-1})T_{(14)} + T_{(142)} + T_{(143)} + T_{(13)(24)} + T_{(134)} + T_{(124)} \Big) + \\
        (1-q^{-1}+q^{-2})T_{(1423)} + (1-q^{-1}+q^{-2})T_{(1324)} + {\color{RoyalBlue}T_{(1432)} + T_{(1342)} + T_{(1243)} + T_{(1234)}},
    \end{multlined} \smallskip
    \\
    \Gamma_{(3,1)} & =
    \begin{multlined}[t]
        (1-q^{-1})\Big( (q^{-1}-q^{-2}) T_{(14)(23)} + q^{-1}T_{(1423)} + q^{-1}T_{(1324)} + 2q^{-1}T_{(14)} + T_{(13)} + T_{(24)} \Big) + \\
        q^{-1}T_{(142)} + q^{-1}T_{(143)} + q^{-1}T_{(134)} + q^{-1}T_{(124)} + {\color{RoyalBlue}T_{(132)} + T_{(123)} + T_{(243)} + T_{(234)}},
    \end{multlined} \smallskip
        \\
    \Gamma_{(2,2)} & = q^{-2}T_{(14)(23)} + q^{-1}T_{(13)(24)} + {\color{RoyalBlue}T_{(12)(34)}},
  \smallskip  \\ 
    \Gamma_{(2,1,1)} & = q^{-2}T_{(14)} + q^{-1}T_{(13)} + q^{-1}T_{(24)} + {\color{RoyalBlue}T_{(12)} + T_{(34)} + T_{(23)}},
 \smallskip   \\
    \Gamma_{(1,1,1,1)} & = {\color{RoyalBlue}T_{\epsilon}}.
\end{align*}
\end{example}

In \cite{ram1991frobenius}, Ram develops the character theory for $\HH_n(q)$ in
the semisimple case. One way of interpreting his results is as a
$q$-deformation of the Murnahan--Nakayama rule for symmetric group characters.
We write $\chi^{\lambda}$ for the character of the irreducible
representation indexed by $\lambda$, and refer the reader to
\cite[Thm~5.4]{ram1991frobenius} for details on this formula.

Similarly to the Geck--Rouquier basis, the only explicit formulas for the characters $\chi^\lambda$ of $\HH_n(q)$ are for elements $T_w$ where $w$ is of minimal length in its conjugacy class. The character values of all other elements $T_w$ in $\HH_n(q)$ are determined by $\chi^{\lambda}(T_{\lambda})$ and can be computed recursively (see \cite[Corollary 5.2]{ram1991frobenius}).

\begin{example}
Consider the character $\chi^\lambda$ for any $\lambda$, evaluated at $T_{\sfc}$. Ram's formulas imply the following:
    \label{ex:qchar-longcycle}
    \[ \chi^\lambda(T_{\sfc}) = \begin{cases}
        (-1)^{k}q^{n-k-1} & \lambda = (n-k,1^k) \\
        0 & \textrm{ otherwise}
    \end{cases}. \]
This computation will be useful in what follows, and in fact hints at why factorization results for $\sfc$ and $T_{\sfc}$ are more tractible than other, arbitrary permutations.
\end{example}

Ram also gives an analogue of \cref{eq:isotypic-projectors} for isotypic projectors $\{ \pi_\lambda \}_{\lambda \vdash n}$ in $\HH_n(q)$ in \cite[Eqn.~2.5]{ram1991frobenius}. His formula involves the dimensions of certain irreducible representations of ${\sf GL}_n(\mathbb{F}_q)$ called unipotent representations (see \cite[\S 4.2, 4.6, 4.7]{grinberg2014hopf} and \cite[\S 3.2]{brauner2023invariant} for more on this class of representations).

In particular, write $f^\lambda(q)$ to be the dimension of the irreducible unipotent representation of ${\sf GL}_n(\mathbb{F}_q)$ indexed by $\lambda$. The following explicit formula for $f^\lambda(q)$ is proved in \cite[Eqn. 4.2]{lewis2014reflection}:
\begin{equation}\label{eq:f^lam}
    f^\lambda(q) = (1-q) (1-q^2) \cdots (1-q^n) \frac{q^{n(\lambda)}}{\prod_{(i,j) \in \lambda} (1-q^{h(i,j)})},
\end{equation}
where $h(i,j)$ is the hook-length of the cell $(i,j)$ in $\lambda$ from \cref{eq:hooklength}, and
\[ n(\lambda) = \sum_{i \geq 1} (i-1) \lambda_i.\]
Equivalently, multiplying and dividing \cref{eq:f^lam} by $(1-q)^n$,
\[
    f^\lambda(q) = \dfrac{q^{n(\lambda)} [n]!_q}{\prod_{(i,j) \in \lambda} [h(i,j)]_q}.
\]

\begin{example}\label{ex:qhookdimension}
    Let $\lambda = (n-k,1^k)$. Then the hook length of the cells of $\lambda$ are given by the multiset
    \begin{equation*}
        \big\{\!\! \big\{ \
        \lefteqn{\overbrace{\phantom{1, \ 2, \ \ldots, \ k, \ n}}^{\text{first column}}}
        1, \ 2, \ \ldots, \ k, \ \underbrace{n, \ n - k - 1, \ n - k - 2, \ \ldots, \ 2, \ 1}_{\text{first row}}
        \ \big\}\!\! \big\}
    \end{equation*}
    where the cells are read from bottom-to-top along the first column
    and then left-to-right along the first row.
    Thus,
    \[ n(\lambda) = 0 (n-k) + 1 + 2 + \cdots + k = \binom{k+1}{2}  \]
    and
    \begin{align*}
        f^{(n-k,1^k)}(q)
        &= \dfrac{q^{\binom{k+1}{2}} [n]!_q}{[1]_q\cdots[k]_q [1]_q \cdots [n-k-1]_q [n]_q} \\
         &= q^{\binom{k+1}{2}} \frac{[n-1]!_q}{[k]!_q [n-k-1]!_q}.
    \end{align*}
\end{example}

Ram proves the following, which gives an analogue of \cref{eq:pi.to.c} for $\HH_n(q)$:
\begin{proposition}[\cite{ram2024lusztig}]\label{prop:projectorformula}
The element $\pi_\lambda$ has the following expression in the natural basis of $\HH_n(q)$:
\begin{equation}\label{eq:equationforp}
    \pi_\lambda = \frac{f^\lambda(q)}{[n]!_q} \sum_{w \in S_n} \chi^\lambda(T_w) q^{-\ell(w)} T_{w^{-1}}.
\end{equation}
Thus, $\pi_\lambda$ can be expressed in the Geck--Rouquier basis of $Z(\HH_n(q))$ as

    \begin{equation} \label{eq:projectorsinGRbasis}
\pi_\lambda = \frac{f^\lambda(q)}{[n]!_q}\sum_{\mu \vdash n} \chi^\lambda(T_{\mu}) q^{-\ell(w^*_\mu)} \Gamma_\mu.\end{equation}
\end{proposition}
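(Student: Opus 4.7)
The plan is to build on the fact that $\HH_n(q)$ is a split semisimple symmetric algebra in the generic setting, with symmetrizing trace $\tau \colon \HH_n(q) \to \kk$ determined by $\tau(T_w) = \delta_{w,\epsilon}$ on the natural basis. A direct computation using the quadratic relation $T_i^2 = (q-1)T_i + q$ shows that the $\tau$-dual basis of $\{T_w\}$ is the collection $\{q^{-\ell(w)}\, T_{w^{-1}}\}_{w \in \symm_n}$. Once this is in hand, general Schur--Wedderburn theory for symmetric algebras yields, for each $\lambda \vdash n$, the central primitive idempotent
\begin{equation*}
    \pi_\lambda \;=\; \frac{1}{c_\lambda} \sum_{w \in \symm_n} \chi^\lambda(T_w)\, q^{-\ell(w)}\, T_{w^{-1}},
\end{equation*}
where $c_\lambda$ is the Schur element of $\HH_n(q)$ attached to $\lambda$, i.e.\ the scalar in the trace decomposition $\tau = \sum_\lambda c_\lambda^{-1}\, \chi^\lambda$.

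To match the formula in the statement, one must identify $c_\lambda = [n]!_q / f^\lambda(q)$. A self-contained route is to evaluate $\chi^\mu$ on the above expression for $\pi_\lambda$: the left-hand side gives $\chi^\mu(\pi_\lambda) = \delta_{\lambda,\mu}\, \dim(S^\mu)$, while the right-hand side, via Schur (bi)orthogonality of the irreducible characters with respect to $\tau$, gives $\delta_{\lambda,\mu}\, \dim(S^\lambda)^2 / c_\lambda$; thus $c_\lambda = \dim(S^\lambda)$ under the normalization of $\tau$. The identification $\dim(S^\lambda) = [n]!_q / f^\lambda(q)$ is then a standard hook-length computation matching the Schur elements of type $A$ Hecke algebras against the unipotent dimension formula \cref{eq:f^lam}. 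This proves \cref{eq:equationforp}.

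The second formula \cref{eq:projectorsinGRbasis} then follows by re-expanding the natural-basis expression in the Geck--Rouquier basis. Write $\pi_\lambda = \sum_\mu a_{\lambda,\mu} \Gamma_\mu$, which is possible because $\pi_\lambda \in Z(\HH_n(q))$ and $\{\Gamma_\mu\}$ is a basis of the center by \cref{thm:GRbasis}. The defining property \cref{thm:GRbasis}(3) says that $[T_{w^*_\mu}]\Gamma_\nu = \delta_{\mu,\nu}$, so comparing coefficients of $T_{w^*_\mu}$ gives $a_{\lambda,\mu} = [T_{w^*_\mu}]\, \pi_\lambda$. Reading this coefficient off \cref{eq:equationforp} (noting that $T_{w^{-1}} = T_{w^*_\mu}$ precisely when $w = (w^*_\mu)^{-1}$) yields
\begin{equation*}
    a_{\lambda,\mu} \;=\; \frac{f^\lambda(q)}{[n]!_q}\, \chi^\lambda\bigl(T_{(w^*_\mu)^{-1}}\bigr)\, q^{-\ell(w^*_\mu)}.
\end{equation*}
Since $(w^*_\mu)^{-1}$ also has minimal length in its (same) conjugacy class of type $\mu$, and since $\chi^\lambda$ of $T_w$ for $w$ of minimal length depends only on the cycle type (a theorem of Geck--Pfeiffer that is the basis for \cref{thm:GRbasis}), $\chi^\lambda(T_{(w^*_\mu)^{-1}}) = \chi^\lambda(T_\mu)$, giving exactly \cref{eq:projectorsinGRbasis}.

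The main technical obstacle is the identification of the Schur element $c_\lambda$ with $[n]!_q/f^\lambda(q)$, which in my approach hinges on the Schur orthogonality relations for $\HH_n(q)$ and, implicitly, on Ram's character formula to supply the required nondegeneracy. A secondary but nontrivial ingredient is verifying that $\{q^{-\ell(w)} T_{w^{-1}}\}$ is genuinely $\tau$-dual to $\{T_w\}$: this reduces to computing $\tau(T_w\, q^{-\ell(v)} T_{v^{-1}})$ for all pairs $v,w$ using the quadratic and braid relations, and is elementary but notation-heavy.
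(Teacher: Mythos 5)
You should note at the outset that the paper does not actually prove this proposition---it is quoted from Ram \cite{ram2024lusztig}---so you are attempting more than the source does. Your overall framework is the standard and correct one: $\HH_n(q)$ is a split semisimple symmetric algebra with symmetrizing trace $\tau(T_w)=\delta_{w,\epsilon}$, the $\tau$-dual basis of $\{T_w\}$ is $\{q^{-\ell(w)}T_{w^{-1}}\}$, and the central primitive idempotents are $\pi_\lambda = c_\lambda^{-1}\sum_w \chi^\lambda(T_w)\,q^{-\ell(w)}T_{w^{-1}}$ with $c_\lambda$ the Schur element. Your derivation of \cref{eq:projectorsinGRbasis} from \cref{eq:equationforp} is correct and clean: extracting the coefficient of $T_{w^*_\mu}$ via \cref{thm:GRbasis}(3) and replacing $\chi^\lambda(T_{(w^*_\mu)^{-1}})$ by $\chi^\lambda(T_\mu)$ using the Geck--Pfeiffer minimal-length result (the paper's \cref{lemma:character_minlength}) is exactly the right argument.

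The gap is in your identification of the Schur element. First, the orthogonality computation is circular: the relation $\sum_w \chi^\lambda(T_w)\,q^{-\ell(w)}\chi^\mu(T_{w^{-1}}) = \delta_{\lambda,\mu}\,c_\lambda\,\chi^\lambda(1)$ already contains $c_\lambda$ (it is equivalent to the decomposition $\tau=\sum_\lambda c_\lambda^{-1}\chi^\lambda$ that defines the Schur elements), so applying $\chi^\mu$ to both sides of the idempotent formula returns the tautology $\delta_{\lambda,\mu}f^\lambda = \delta_{\lambda,\mu}f^\lambda$ and determines nothing. Second, the value you extract, $c_\lambda=\dim(S^\lambda)$, is wrong, and the subsequent claim $\dim(S^\lambda)=\qfact{n}/f^\lambda(q)$ is false: $\dim(S^\lambda)=f^\lambda$ is an integer (the number of standard Young tableaux), whereas $\qfact{n}/f^\lambda(q)$ is a genuine polynomial in $q$ that specializes to $n!/f^\lambda$ at $q=1$; for instance $\dim S^{(n)}=1$ while $\qfact{n}/f^{(n)}(q)=\qfact{n}$. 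The correct statement is that the Schur element equals $\qfact{n}/f^\lambda(q)$, i.e.\ the generic degrees of the type $A$ Hecke algebra are the unipotent dimensions $f^\lambda(q)$ of \cref{eq:f^lam}. This is a genuinely nontrivial computation (Steinberg, Hoefsmit; see Geck--Pfeiffer) that cannot be recovered from the orthogonality relations alone and must either be cited or established independently, for example by evaluating $\tau$ on the seminormal idempotents $\pi_T$ and computing the resulting hook-length product.
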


Just as in the case of \cref{eq:c.to.pi} in the symmetric group, there is an elegant inverse change of basis from the $\Gamma_\mu$ to the $\pi_\lambda$; while this fact is likely known, we provide a proof for completeness.
\begin{proposition}\label{prop:xintermsofp}
The element $\Gamma_\mu$ has the following expression in the basis $\{\pi_\lambda\}_{\lambda \vdash n}$ of $Z(\HH_n(q))$:
    \[
            \Gamma_\mu =  \sum_\lambda \frac{\chi^\lambda(\Gamma_\mu)}{f^\lambda} \pi_\lambda.
    \]
\end{proposition}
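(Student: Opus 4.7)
The plan is to mimic the derivation of \cref{eq:c.to.pi} in the symmetric-group setting, using that $\Gamma_\mu$ is central and that the $\pi_\lambda$ are (by construction) isotypic projectors. The crux is to prove the orthogonality relation
\[ \chi^\nu(\pi_\lambda) \;=\; \delta_{\lambda,\nu}\, f^\lambda, \]
after which the coefficients in the $\pi_\lambda$-expansion of $\Gamma_\mu$ can be read off by pairing against the irreducible characters.

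First, since $\Gamma_\mu \in Z(\HH_n(q))$ by \cref{thm:GRbasis} and $\{\pi_\lambda\}_{\lambda \vdash n}$ is a basis of $Z(\HH_n(q))$, there exist unique scalars $a_\lambda \in \kk$ such that
\[ \Gamma_\mu \;=\; \sum_{\lambda \vdash n} a_\lambda\, \pi_\lambda. \]
The task is therefore to identify $a_\nu = \chi^\nu(\Gamma_\mu)/f^\nu$.

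Second, I would establish the orthogonality $\chi^\nu(\pi_\lambda) = \delta_{\lambda,\nu}\, f^\lambda$ by the following representation-theoretic argument. The central idempotent $\pi_\lambda$ is the isotypic projector onto the $S^\lambda$-component of any $\HH_n(q)$-module (this being the Hecke-algebra analogue of the remark following \cref{eq:isotypic-projectors}, and consistent with \cref{prop:projectorformula}). Hence $\pi_\lambda$ acts on the irreducible $S^\nu$ as the identity when $\nu = \lambda$ and as zero otherwise. Since $\HH_n(q)$ is semisimple in our setting, $\dim_\kk S^\nu = f^\nu$, so the trace of this action on $S^\nu$ is $\delta_{\lambda,\nu}\, f^\lambda$, which is precisely $\chi^\nu(\pi_\lambda)$.

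Finally, applying $\chi^\nu$ to both sides of the expansion and invoking the orthogonality yields
\[ \chi^\nu(\Gamma_\mu) \;=\; \sum_\lambda a_\lambda\, \chi^\nu(\pi_\lambda) \;=\; a_\nu\, f^\nu, \]
which solves to $a_\nu = \chi^\nu(\Gamma_\mu)/f^\nu$, giving the claimed formula. The only delicate step is the orthogonality itself; once the isotypic-projector interpretation of $\pi_\lambda$ is secured (which can alternatively be verified by a direct calculation from Ram's formula \cref{eq:equationforp} combined with Schur orthogonality of Hecke characters in the semisimple case), the rest is a short linear-algebra computation.
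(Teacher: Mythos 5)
Your proposal is correct and follows essentially the same route as the paper: both express $\Gamma_\mu$ in the $\{\pi_\lambda\}$ basis of $Z(\HH_n(q))$, use the fact that $\pi_\lambda$ acts as $\delta_{\lambda,\nu}$ on the irreducible indexed by $\nu$ to get $\chi^\nu(\pi_\lambda)=\delta_{\lambda,\nu}f^\nu$, and then pair with $\chi^\nu$ to solve for the coefficients. No gaps; your extra remark on verifying the projector property via Ram's formula is optional but harmless.
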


\begin{proof}
The element $\pi_\lambda$ acts as multiplication by $\delta_{\lambda,\eta}$ on the irreducible representation of $\HH_n(q)$ indexed by~$\eta$. Therefore,
\[ \chi^\eta(\pi_\lambda) = \begin{cases}
    \chi^\eta(1) = f^\eta & \lambda = \eta\\
    0 & \lambda \neq \eta.
\end{cases} \]
Write $\Gamma_\mu = \sum_{\lambda \vdash n} c_{\mu,\lambda} \pi_\lambda$ and apply $\chi^\eta$ to both sides of the equality to obtain:
\[
\chi^\eta(\Gamma_\mu) = \sum_{\lambda \vdash n} c_{\mu,\lambda} \chi^\eta(\pi_\lambda) = c_{\mu,\eta} f^\eta.
\]
Solving for $c_{\mu,\eta}$ yields the result.
\end{proof}

\subsubsection{The $q$-Jucys--Murphy elements}
We now turn to the $\HH_n(q)$-analogues of the Jucys--Murphy elements and results in \cref{section.JMforsymmetricgroup}. We first define the \emph{Jucys--Murphy elements of $\HH_n(q)$.}
\begin{definition}
    For any $1 \leq k \leq n$, define $J_k(q) \in \HH_n(q)$ to be the sum
   \begin{equation} \label{eqn.JMforHecke} 
       J_k(q) := q^{1-k} \  T_{(1 \, k)} + q^{2-k} \ T_{(2 \, k)} + \cdots + q^{-1} T_{(k{-}1 \, k)},
    \end{equation}
    where $T_{(i \, k)}$ is the natural basis element indexed by the transposition $(i \; k)$.
    As in the case of the symmetric group, we adopt the convention that $J_1(q) = 0$.
    Observe that $J_k(1) = J_k \in \kk[\symm_n]$.
\end{definition}
\begin{remark}

The elements in \cref{eqn.JMforHecke} are sometimes referred to the \emph{additive Jucys--Murphy elements}, since there are other elements, defined for instance in \cite{elias2017categorical}, that are \emph{multiplicative Jucys--Murphy elements}. These names come from the fact that the $J_k(q)$ in \cref{eqn.JMforHecke} have a joint eigenbasis for $\HH_n(q)$ whose corresponding eigenvalues are an (additive) $q$-analogue of content $\cont_k(T)$, as in \cref{def:qcontent} below. The multiplicative Jucys--Murphy elements also pair-wise commute, and have a shared eigenbasis of $\HH_n(q)$ whose corresponding eigenvalues are a multiplicative $q$-analogue of $\cont_k(T)$, i.e. $q^{\cont_k(T)}$. The latter elements are useful in the context of categorification, but (unlike the $J_k(q)$ above) do not specialize to $J_k$ when $q=1$.
\end{remark}

We will see that the $J_k(q)$ have many of the same remarkable properties as their symmetric group counterparts. First, they also pairwise commute, and thus share a joint eigenbasis of $\HH_n(q)$.

To make this precise, we first define a $q$-analogue of content, which is precisely the $q$-analogue $[\cont_k(T)]_q$.

\begin{definition}\label{def:qcontent}
    For any $T \in \syt(\lambda)$ and $k \in [n]$, the \emph{$q$-content of $T$ at $k$} is
    \[ \qcont_k(T):= \qint{\cont_k(T)} = [\col_k(T) - \row_k(T)]_q.\]
    The \emph{multiset of $q$-contents of $T$} is then
    \[ \qcont(T):= \{\! \{ \qcont_k(T): 1 \leq k \leq n \} \! \}. \]
Once more, for any partition $\lambda$ we set $\qcont(\lambda):= \qcont(T)$, where $T \in \syt(\lambda)$ is any standard Young tableau of shape $\lambda$.
\end{definition}
Note that $\qcont_k(T)$ is not necessarily a polynomial in $q$, since $\cont_k(T)$ can be a negative number; in general it is a Laurent polynomial in $q$.
\begin{example}
Following \cref{ex:contents}, the $q$-contents of $\lambda = (5,4,2,2,1)$ are
\[\qcont\big((5,4,2,2,1)\big) = \big\{\!\!\big\{ \qint{-4},\qint{-3},\qint{-2},\qint{-2},\qint{-1},\qint{-1},\qint{0},\qint{0},\qint{1},\qint{1},\qint{2},\qint{2},\qint{3},\qint{4} \big\}\!\!\big\}.\]
\end{example}
There are seminormal forms $\pi_T \in \HH_n(q)$ satisfying precisely the properties outlined of \cref{lem:interaction-of-Jucys--Murphys-with-pts}, replacing $J_k$ with $J_k(q)$ and $\cont_k(T)$ with $\qcont_k(T)$. The most pertinent of these properties are the following (see \cite{mathas1998hecke}, \cite[Prop. 2.11]{axelrod2024spectrum}, and the references therein):
    \begin{equation}
        \label{eq:isotypic-projectors-properties-hecke}
        \sum_{\lambda \vdash n} \pi_\lambda = 1,
        \qquad
        \sum_{T \in \syt(\lambda)} \pi_T = \pi_\lambda,
        \qquad
        J_k(q) \, \pi_T = \qcont_k(T) \, \pi_T.
    \end{equation}

Finally, the center $Z(\HH_n(q))$ is determined by the $J_k(q)$ in the following sense. Given $f \in \Lambda$, define
\[ f\big(\Xi_n(q)\big):=f \big( J_1(q), J_2(q), \ldots, J_n(q) \big) \in \HH_n(q),\]
so that $f(\Xi_n(1)) = f(\Xi_n)$. We call the map $f \mapsto f(\Xi_n(q))$ the \emph{content evaluation map}.

The importance of the content evaluation map is the following analogue of \cref{thm:isomorphism.center.evaluation}, which was conjectured by Dipper--James in \cite{dipper1987blocks} and proved by Francis--Wang in \cite{francis2006centres}:
\begin{theorem}[Francis--Wang]
    \label{thm:FrancisWang}
    For any $n \geq 1$, the element $z \in \HH_n(q)$ is in the center $Z(\HH_n(q))$ if and only if it can be expressed as
    $ z = f\big(\Xi_n(q)\big) $
    for some $f \in \Lambda$.
    In other words,
    \[Z(\HH_n(q)) = \big\{ f\big(\Xi_n(q)\big) : f \in \Lambda \big\}. \]
\end{theorem}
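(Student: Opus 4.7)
I would prove the two inclusions of the equality $Z(\HH_n(q)) = \{f(\Xi_n(q)) : f \in \Lambda\}$ separately, paralleling the symmetric group case (\cref{thm:isomorphism.center.evaluation}). The easy direction — content evaluations are central — uses the joint eigenbasis of the $q$-Jucys--Murphy elements. The harder direction — every central element is a content evaluation — reduces to a combinatorial fact about content multisets plus polynomial interpolation.

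\emph{Content evaluations are central.} For any $f \in \Lambda$, I would compute the action of $f(\Xi_n(q))$ on the seminormal basis. By \cref{eq:isotypic-projectors-properties-hecke}, for each $T \in \syt_n$,
\[f(\Xi_n(q)) \, \pi_T = f\bigl(\qcont_1(T), \ldots, \qcont_n(T)\bigr) \, \pi_T.\]
Symmetry of $f$ makes this scalar depend only on the multiset $\qcont(T)$, and hence only on the shape $\lambda = \sh(T)$. Summing over $T \in \syt(\lambda)$ yields $f(\Xi_n(q)) \, \pi_\lambda = f(\qcont(\lambda)) \, \pi_\lambda$, and summing again over $\lambda \vdash n$ expresses $f(\Xi_n(q))$ as a $\kk$-linear combination of the central idempotents $\pi_\lambda$.

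\emph{Every central element is a content evaluation.} By semisimplicity, $\{\pi_\lambda : \lambda \vdash n\}$ is a basis of $Z(\HH_n(q))$, so it suffices to prove the linear map $\Phi : \Lambda \to Z(\HH_n(q))$ defined by $f \mapsto \sum_{\lambda \vdash n} f(\qcont(\lambda)) \, \pi_\lambda$ is surjective. This is equivalent to the tuples $\bigl(f(\qcont(\lambda))\bigr)_{\lambda \vdash n}$ spanning $\kk^N$, where $N$ is the number of partitions of $n$. Granting that the multisets $\qcont(\lambda)$ are pairwise distinct (see below), a Lagrange-style interpolation produces, for each $\lambda_0 \vdash n$, a symmetric polynomial $f_{\lambda_0}$ with $f_{\lambda_0}(\qcont(\lambda)) = \delta_{\lambda,\lambda_0}$, which completes surjectivity.

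\emph{Central lemma and main obstacle: distinctness of content multisets.} Since $q$ is not a root of unity, the map $a \mapsto \qint{a}$ is injective on $\Z$, so distinctness of $\qcont(\lambda)$ reduces to the $q=1$ statement that distinct partitions of $n$ have distinct multisets $\cont(\lambda) = \{j - i : (i,j) \in \lambda\}$. I would prove this via the generating function $F_\lambda(x) := \sum_{(i,j) \in \lambda} x^{j-i}$, which is determined by $\cont(\lambda)$: a direct telescoping computation gives, for any $k \geq \ell(\lambda)$,
\[(x - 1) F_\lambda(x) + \sum_{i=0}^{k-1} x^{-i} = \sum_{i=1}^{k} x^{\lambda_i - i + 1}.\]
The $k$ exponents on the right are distinct, and sorting them in decreasing order as $s_1 > s_2 > \cdots > s_k$ recovers $\lambda$ via $\lambda_i = s_i + i - 1$. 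Taking $k = n$ uniformly handles all $\lambda \vdash n$. This distinctness is the crux of the argument; once it is in hand, the remainder combines the eigenbasis analysis, the semisimplicity of $\HH_n(q)$, and polynomial interpolation in a routine way. Care is required only in the generating-function step, to handle trailing zero parts of $\lambda$ consistently and to extract the correct beta-number set from the resulting Laurent polynomial.
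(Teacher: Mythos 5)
Your proof is correct in the setting the paper actually works in, but note that the paper does not prove this statement at all: it is quoted as a theorem of Francis--Wang (resolving a conjecture of Dipper--James) and used as a black box. So there is nothing in the paper to compare against line by line; what you have written is a self-contained argument for the semisimple case. Your forward direction is essentially the same eigenvector computation the paper later carries out in \cref{lemma:isoprojtechnique} (which, somewhat redundantly, cites \cref{thm:FrancisWang} for centrality even though its own calculation yields $f(\Xi_n(q))=\sum_{\lambda\vdash n}f(\qcont(\lambda))\pi_\lambda$, a combination of the central idempotents). Your backward direction -- surjectivity of $f\mapsto\bigl(f(\qcont(\lambda))\bigr)_\lambda$ via distinctness of the content multisets plus symmetric-function interpolation -- is the standard argument in the semisimple case, and your telescoping identity $(x-1)F_\lambda(x)+\sum_{i=0}^{k-1}x^{-i}=\sum_{i=1}^{k}x^{\lambda_i-i+1}$ checks out row by row; the exponents $\lambda_i-i+1$ are strictly decreasing, so $\cont(\lambda)$ does determine $\lambda$.

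Two caveats worth recording. First, your argument is not a proof of the Francis--Wang theorem in its full strength: their result is established integrally over $\Z[q,q^{-1}]$ and hence holds for all specializations, including non-semisimple ones, whereas your interpolation argument needs $\{\pi_\lambda\}$ to be a basis of the center, i.e.\ semisimplicity, together with the seminormal idempotents $\pi_T$ of \cref{eq:isotypic-projectors-properties-hecke} as input. Second, reducing distinctness of $\qcont(\lambda)$ to distinctness of $\cont(\lambda)$ requires $a\mapsto\qint{a}$ to be injective on $\{-(n-1),\ldots,n-1\}$, i.e.\ $q^j\neq 1$ for $1\leq j\leq 2n-2$; this is slightly stronger than bare semisimplicity of $\HH_n(q)$ (which only forbids $q^j=1$ for $2\leq j\leq n$), though it is automatic when $q$ is an indeterminate, which is the paper's default assumption. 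With these provisos stated, the proof is sound and is a reasonable substitute for the citation in the semisimple regime the paper uses.
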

The content evaluation map plays an important role in the \emph{$q$-Farahat--Higman algebra}, which studies the centers of all Hecke algebras simultaneously.
In particular, in \cite{ryba2023stable} Ryba shows the map can be used to define an isomorphism between the ring of symmetric functions (tensored with a certain coefficient ring) and the $q$-Farahat--Higman algebra.

Once again, some of the most important examples of evaluations $f(\Xi_n(q))$ come from the elementary symmetric functions.
We define the following $q$-deformation of \cref{eqn:Etilden}, which will play a crucial rule in what follows.
\begin{definition}\label{def:etilde}
    Define, for any $n \geq 1$,
    \begin{align*}
    E_{n}(q;t):=& \prod_{i=1}^{n} \big( 1 + J_i(q) t\big)  = \sum_{k \geq 0} e_k\big( \Xi_n(q) \big) t^k   \\
    \tilde{E}_{n}(q;t):=&  \prod_{i=1}^{n} \big( t + J_i(q) \big)  = \sum_{k \geq 0} e_k\big( \Xi_n(q) \big) t^{n-k}
    \end{align*}
\end{definition}

In \cite[Proposition 7.4]{francis2006centres}, Francis--Graham gives a description of $e_k(\Xi_n(q))$, which in turn gives an alternate formulation for $E_{n}(q ; t)$.

\begin{theorem}[Francis--Graham] \label{thm:ryba JM}
For any $n,k \geq 1$, we have
\[
e_k(\Xi_n(q))  = q^{-k} \sum_{\substack{\lambda \vdash n\\ n-\ell(\lambda) = k}} \Gamma_\lambda.
\]
Thus,
\begin{align*}
E_{n}(q;t)  &= \sum_{\lambda \vdash n} \Gamma_{\lambda} \ t^{n-\ell(\lambda)} \ q^{-(n-\ell(\lambda))} \\
\tilde{E}_n(q;t) &= \sum_{\lambda \vdash n} \Gamma_{\lambda} \ t^{\ell(\lambda)} \ q^{-(n-\ell(\lambda))}.
\end{align*}
\end{theorem}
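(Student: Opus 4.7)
The plan is to use the centrality of $e_k(\Xi_n(q))$ to express it in the Geck--Rouquier basis, and then compute the coefficients by extracting $[T_{w^*_\mu}]$ from a direct expansion. By \cref{thm:FrancisWang}, $e_k(\Xi_n(q)) \in Z(\HH_n(q))$, so we may write $e_k(\Xi_n(q)) = \sum_{\mu \vdash n} c_\mu(q)\, \Gamma_\mu$. By the defining property of the Geck--Rouquier basis (\cref{thm:GRbasis}), $[T_{w^*_\mu}] \Gamma_{\mu'} = \delta_{\mu, \mu'}$, so $c_\mu(q) = [T_{w^*_\mu}]\, e_k(\Xi_n(q))$. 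It therefore suffices to prove $c_\mu(q) = q^{-k}$ when $n - \ell(\mu) = k$ and $c_\mu(q) = 0$ otherwise; the two generating function identities then follow immediately by substituting this formula into \cref{def:etilde} and re-indexing the sums by $\mu$ instead of $k$.

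To compute the coefficient, I would expand directly
\[
e_k(\Xi_n(q)) = \sum_{1 \le i_1 < \cdots < i_k \le n}\;\; \sum_{\substack{a_j < i_j \\ 1 \le j \le k}} q^{\sum_j (a_j - i_j)}\; T_{(a_1\,i_1)} T_{(a_2\,i_2)} \cdots T_{(a_k\,i_k)},
\]
and, for each $\mu$, isolate the contribution to $[T_{w^*_\mu}]$ after rewriting each Hecke product in the natural basis. The classical Jucys theorem (recovered by specializing $q=1$) rests on two combinatorial facts: (a) every $\sigma \in \symm_n$ with $n - \cyc(\sigma) = k$ has a \emph{unique} factorization $\sigma = (a_1\,i_1)(a_2\,i_2)\cdots(a_k\,i_k)$ with $i_1 < \cdots < i_k$ and $a_j < i_j$; and (b) contributions to permutations with more cycles cancel via a sign-reversing involution on ``wasted'' transposition pairs (ones that split and then re-merge cycles).

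The heart of the proof is to upgrade both facts to the Hecke level. For $\mu$ with $n - \ell(\mu) = k$, I would show that the unique canonical factorization of $w^*_\mu$ contributes exactly $q^{-k}\, T_{w^*_\mu}$ to the expansion, and that no other factorization contributes to $[T_{w^*_\mu}]$. Small cases suggest the mechanism: a length-additive canonical factorization contributes $q^{\sum_j (a_j - i_j)} = q^{-k}$ directly when every transposition is adjacent; for non-adjacent transpositions, the quadratic relation $T_i^2 = (q-1)T_i + q$ produces a compensating power of $q$ so that the total coefficient of $T_{w^*_\mu}$ is again $q^{-k}$. The main obstacle, which I expect to be the most delicate step, is the second statement: showing that for $\mu$ with $n - \ell(\mu) < k$, all contributions to $[T_{w^*_\mu}]$ cancel. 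This requires a $q$-weighted sign-reversing involution extending the classical Jucys involution, with careful bookkeeping of the Hecke multiplication coefficients to verify that paired factorizations contribute equal and opposite weights. This is exactly the content of Francis--Graham's proof of \cite[Prop.~7.4]{francis2006centres}, which we invoke to complete the argument.
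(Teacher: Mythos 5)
The paper offers no proof of this statement at all: it is quoted directly from Francis--Graham \cite[Prop.~7.4]{francis2006centres}, and your argument likewise defers the essential computation (that $[T_{w^*_\mu}]\,e_k(\Xi_n(q))$ equals $q^{-k}$ or $0$ according to whether $n-\ell(\mu)=k$) to that same reference, so the two are in essence the same. Your added scaffolding --- centrality via \cref{thm:FrancisWang}, coefficient extraction via the Geck--Rouquier normalization $[T_{w^*_\mu}]\Gamma_{\mu'}=\delta_{\mu,\mu'}$, and the re-indexing that yields the two generating-function identities --- is correct and makes the logical structure explicit.
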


\section{Main results}
\label{sec:main-results}

In this section, we prove our main results.
We begin by highlighting in \cref{ssec:isotypic-projectors-technique} the main technique that will be used in our proofs.
Among other things, this technique provides a uniform and simple method for proving many
known symmetric group factorization results.
We illustrate this by proving Jackson's \cref{thm:introsymmetricsummary}(3);
to the best of our knowledge, this proof is original.
It will also serve as a blueprint for many of our proofs.

We then prove \cref{thm:intro_q_summary_thm}(1) in \cref{ssec:a-q-proof}, \cref{thm:intro_q_summary_thm}(2) in \cref{ssec:q-b-proof}, and \cref{thm:intro_q_summary_thm}(3) in \cref{ssec:q-c-proof}. \cref{ssec:evaluations-other-symmetric-functions}
uses the results in \cref{ssec:isotypic-projectors-technique} and \cref{thm:intro_q_summary_thm} to prove \cref{thm:intro_reciprocity} evaluating other notable symmetric functions.
\subsection{Using isotypic projectors to evaluate symmetric functions at Jucys--Murphy elements}
\label{ssec:isotypic-projectors-technique}

Recall that we are interested in computing the coefficient of
$T_{\sfc}$ in $f(\Xi_n(q))$, the evaluation of a symmetric function $f$ at the
$q$-Jucys--Murphy elements.
We can break this down into two steps, as follows.
\begin{enumerate}
    \item By \cref{thm:FrancisWang}, the element $f(\Xi_n(q))$
belongs to the center of $\HH_n(q)$.
Since the isotypic projectors $\pi_\lambda$ form a basis of the center,
$f(\Xi_n(q))$ can be expressed as a linear
combination of the $\{\pi_\lambda\}$. \smallskip
\item We determine the coefficients of $T_{\sfc}$ in the
individual isotypic projectors $\pi_{\lambda}$,
which combine to give an expression for the coefficient of
$T_{\sfc}$ in $f(\Xi_n(q))$.
\end{enumerate}

We begin by expressing $f(\Xi_n(q))$ as a linear combination of the isotypic projectors.

\begin{lemma}\label{lemma:isoprojtechnique}
    Let $f$ be any symmetric function. Then
    $f(\Xi_n(q))$ is an element of $Z(\HH_n(q))$ and
    \[ f(\Xi_n(q)) = \sum_{\lambda \vdash n} f\big(\qcont(\lambda)\big) \, \pi_\lambda. \]
\end{lemma}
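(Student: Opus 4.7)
The plan is to exploit the eigenbasis description of the seminormal idempotents $\{\pi_T\}_{T \in \syt_n}$ collected in \eqref{eq:isotypic-projectors-properties-hecke}. First I would note that because the $q$-Jucys--Murphy elements pairwise commute, the element $f(\Xi_n(q))$ is unambiguously defined for any $f \in \Lambda$ (viewing $f$ as a polynomial in finitely many variables by setting all but the first $n$ variables to $0$, which is consistent with the convention $J_1(q) = 0$ and $\qcont_1(T) = 0$). Centrality will follow automatically once the claimed formula is proved, since each $\pi_\lambda$ lies in $Z(\HH_n(q))$; alternatively it is already guaranteed by \cref{thm:FrancisWang}.

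The heart of the argument is to evaluate $f(\Xi_n(q))\,\pi_T$ for a single tableau $T \in \syt_n$. Using completeness and orthogonality of the seminormal idempotents,
\[
f(\Xi_n(q)) \;=\; f(\Xi_n(q)) \cdot \Big(\sum_{T \in \syt_n} \pi_T\Big) \;=\; \sum_{T \in \syt_n} f(\Xi_n(q))\,\pi_T.
\]
Writing $f$ as a polynomial $g(x_1,\dots,x_n)$ in $n$ commuting indeterminates, a short induction on monomials (relying on the fact that the $J_k(q)$ pairwise commute and that $\pi_T$ is simultaneously an eigenvector for each of them) yields
\[
g\bigl(J_1(q),\ldots,J_n(q)\bigr)\,\pi_T \;=\; g\bigl(\qcont_1(T),\ldots,\qcont_n(T)\bigr)\,\pi_T.
\]
Since $f$ is symmetric, the scalar on the right depends only on the multiset $\qcont(T)$, so it equals $f(\qcont(T))$.

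The final step is to regroup the sum by shape. Because $\qcont(T)$ depends only on $\lambda = \sh(T)$, the scalar $f(\qcont(\lambda))$ can be pulled out of the inner sum, and then the refinement $\pi_\lambda = \sum_{T \in \syt(\lambda)} \pi_T$ from \eqref{eq:isotypic-projectors-properties-hecke} gives
\[
f(\Xi_n(q)) \;=\; \sum_{\lambda \vdash n} f\bigl(\qcont(\lambda)\bigr) \sum_{T \in \syt(\lambda)} \pi_T \;=\; \sum_{\lambda \vdash n} f\bigl(\qcont(\lambda)\bigr)\,\pi_\lambda,
\]
which simultaneously confirms $f(\Xi_n(q)) \in Z(\HH_n(q))$. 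There is no real obstacle: the argument is a direct consequence of the seminormal eigenvalue setup, and the only point requiring care is the bookkeeping that makes the "$f$ as polynomial in $n$ variables" reading compatible with $J_1(q) = 0$.
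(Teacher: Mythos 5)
Your proof is correct and follows essentially the same route as the paper's: decompose via the complete family of seminormal idempotents $\pi_T$, use the simultaneous eigenvalue property $J_k(q)\,\pi_T = \qcont_k(T)\,\pi_T$ monomial by monomial, invoke the symmetry of $f$ and the shape-dependence of $\qcont(T)$, and regroup by $\pi_\lambda = \sum_{T \in \syt(\lambda)} \pi_T$. The only (harmless) variation is that you observe centrality also follows a posteriori from the formula itself, whereas the paper simply cites \cref{thm:FrancisWang}.
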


\begin{proof}
    From \cref{thm:FrancisWang}, we have that $f(\Xi_n(q))$ belongs to the center of $\HH_n(q)$.

    Recall from~\cref{eq:isotypic-projectors-properties-hecke}
    that the elements $\pi_T$ satisfy
    \begin{equation*}
        \sum_{\lambda \vdash n} \pi_\lambda = 1,
        \qquad
        \sum_{T \in \syt(\lambda)} \pi_T = \pi_\lambda,
        \qquad
        J_k(q) \, \pi_T = \qcont_k(T) \, \pi_T.
    \end{equation*}
    Repeatedly applying the last property, we have
    \begin{equation*}
        J_1(q)^{a_1} \cdots J_n(q)^{a_n} \, \pi_T
        = \qcont_1(T)^{a_1} \cdots \qcont_n(T)^{a_n} \, \pi_T.
    \end{equation*}
    Writing $f = \sum_{\alpha \vDash n} f_\alpha x^\alpha$,
    where $f_\alpha$ is the coefficient of the monomial $x^\alpha$,
    it follows that
    \begin{align*}
        f\big(\Xi_n(q)\big) \, \pi_{T}
        & = \bigg(\sum_{\alpha \vDash n} f_\alpha \, x_1^{\alpha_1} x_2^{\alpha_2} \cdots x_n^{\alpha_n}\bigg) \bigg|_{x_k = J_k(q)} \pi_{T} \\
        & = \sum_{\alpha \vDash n} f_\alpha \, J_1(q)^{\alpha_1} J_2(q)^{\alpha_2} \cdots J_n(q)^{\alpha_n} \, \pi_{T} \\
        & = \sum_{\alpha \vDash n} f_\alpha \, \qcont_1(T)^{\alpha_1} \qcont_2(T)^{\alpha_2} \cdots \qcont_n(T)^{\alpha_n} \, \pi_{T} \\
        & = \bigg(\sum_{\alpha \vDash n} f_\alpha \, x_1^{\alpha_1} x_2^{\alpha_2} \cdots x_n^{\alpha_n}\bigg) \bigg|_{x_k = \qcont_k(T)} \pi_{T} \\
        & = f\big(\qcont_1(T), \qcont_2(T), \ldots, \qcont_n(T)\big) \, \pi_{T} \\
        & = f\big(\qcont(\sh(T))\big) \, \pi_{T},
    \end{align*}
    where the last equality follows from the observation that $\qcont(T)$ only
    depends on $\sh(T)$.
    Consequently,
    \begin{equation*}
        \begin{aligned}[b]
            f\big(\Xi_n(q)\big)
            & = \sum_{\lambda \vdash n} f\big(\Xi_n(q)\big) \, \pi_\lambda
            & \text{(because $\textstyle 1 = \sum_{\lambda \vdash n} \pi_\lambda$)}
            \\
            & = \sum_{\lambda \vdash n} \sum_{T \in \syt(\lambda)} f\big(\Xi_n(q)\big) \, \pi_T
            & \text{($\textstyle \pi_\lambda = \sum_{T \in \syt(\lambda)} \pi_T$)}
            \\
            & = \sum_{\lambda \vdash n} \sum_{T \in \syt(\lambda)} f\big(\qcont(\lambda)\big) \, \pi_T
            & \text{\qquad ($\textstyle f(\Xi_n(q)) \pi_{T} = f(\qcont(\lambda)) \pi_{T}$)}
            \\
            & = \sum_{\lambda \vdash n} f\big(\qcont(\lambda)\big) \sum_{T \in \syt(\lambda)} \pi_T
            \\
            & = \sum_{\lambda \vdash n} f\big(\qcont(\lambda)\big) \, \pi_\lambda
            & \text{($\textstyle \sum_{T \in \syt(\lambda)} \pi_T = \pi_\lambda$)}
        \end{aligned}
        \qedhere
    \end{equation*}
\end{proof}

\cref{lemma:isoprojtechnique} expresses $f(\Xi_n(q))$ as a
linear combination of the isotypic projectors $\pi_\lambda$,
reducing the computation of the coefficient of $T_{\sfc}$ in
$f(\Xi_n(q))$ to the computation of its coefficient in each $\pi_\lambda$.

In order to compute the coefficient of $T_{\sfc}$ in $\pi_\lambda$, we need the following result on evaluating
the irreducible characters of the Hecke algebra.

\begin{lemma}\label{lemma:character_minlength}
    Let $\chi^\lambda$ be the character of the irreducible representation
    of $\HH_n(q)$ indexed by $\lambda \vdash n$.
    If $w \in \symm_n$ is of minimal length in its conjugacy class, then
    \[ \chi^\lambda(T_w) = \chi^\lambda(T_{w^{-1}}). \]
\end{lemma}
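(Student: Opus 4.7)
The plan is to leverage two well-known facts: inversion in $\symm_n$ preserves both length and conjugacy class, and character values of $\HH_n(q)$ on minimal-length representatives depend only on the conjugacy class (not on the specific representative).

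First, observe that $w$ and $w^{-1}$ have the same cycle type, hence lie in the same conjugacy class $\CC_\mu$. Moreover $\ell(w) = \ell(w^{-1})$, since reversing a reduced expression $w = s_{i_1} \cdots s_{i_\ell}$ yields a reduced expression $w^{-1} = s_{i_\ell} \cdots s_{i_1}$ of the same length. Thus, if $w$ is of minimal length in $\CC_\mu$, so is $w^{-1}$.

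Second, invoke Ram's $q$-analogue of the Murnaghan--Nakayama rule \cite[Theorem~5.4]{ram1991frobenius}: for any $u \in \symm_n$ of minimal length in its conjugacy class $\CC_\mu$, the value $\chi^\lambda(T_u)$ is given by an explicit combinatorial sum over border-strip tableaux of shape $\lambda$ whose weights depend only on $\mu$. Consequently $\chi^\lambda(T_u)$ depends only on the cycle type of $u$. Applying this to $w$ and $w^{-1}$, which share a cycle type and are both of minimal length, yields $\chi^\lambda(T_w) = \chi^\lambda(T_{w^{-1}})$.

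The main difficulty is that the argument invokes a nontrivial structural result from Hecke algebra character theory. A more self-contained alternative would be to introduce the $\kk$-linear anti-involution $\iota : \HH_n(q) \to \HH_n(q)$ defined on generators by $\iota(T_i) = T_i$; this is well defined since the quadratic and braid relations in \cref{def:heckealgebra} are invariant under reversal of products, and one checks on reduced expressions that $\iota(T_w) = T_{w^{-1}}$. The observation on lengths and cycle types above, combined with the Geck--Rouquier characterization in \cref{thm:GRbasis}, shows that $\iota$ fixes each $\Gamma_\mu$, and hence fixes $Z(\HH_n(q))$ pointwise; in particular $\iota(\pi_\lambda) = \pi_\lambda$. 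Comparing coefficients in the natural basis on the two sides of Ram's formula \cref{eq:equationforp} then yields $\chi^\lambda(T_w) = \chi^\lambda(T_{w^{-1}})$ for \emph{every} $w \in \symm_n$, which is stronger than the stated claim.
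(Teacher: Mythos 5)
Your proposal is correct. Your first argument is essentially the route the paper takes: both reduce the claim to the facts that (i) $w^{-1}$ lies in the same conjugacy class as $w$ and satisfies $\ell(w)=\ell(w^{-1})$, so it is again of minimal length in that class, and (ii) irreducible characters of $\HH_n(q)$ take the same value on $T_w$ and $T_{w'}$ whenever $w,w'$ are both minimal-length elements of the same class. The only difference is the source you cite for (ii): the paper invokes Geck--Pfeiffer \cite[Theorem 1.1(b)]{GePf93irrchar}, which says $T_w$ and $T_{w'}$ are actually conjugate in $\HH_n(q)$; note that Ram's formula \cite[Thm.~5.4]{ram1991frobenius} by itself computes $\chi^\lambda$ on one \emph{chosen} standard representative per class, so the independence from the choice of minimal-length representative is precisely the Geck--Pfeiffer input rather than a consequence of the combinatorial formula alone --- your attribution should be adjusted accordingly. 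Your second, alternative argument is genuinely different and worth noting: the anti-involution $\iota(T_i)=T_i$ is well defined (all relations in \cref{def:heckealgebra} are reversal-invariant), satisfies $\iota(T_w)=T_{w^{-1}}$, and fixes each $\Gamma_\mu$ by the uniqueness in \cref{thm:GRbasis} (centrality is preserved by an anti-automorphism, inversion fixes each $\cc_\mu$ at $q=1$, and inversion permutes the minimal-length elements within each class); hence $\iota$ fixes $Z(\HH_n(q))$ pointwise, and comparing coefficients of $T_w$ on both sides of $\iota(\pi_\lambda)=\pi_\lambda$ via \cref{eq:equationforp} gives $\chi^\lambda(T_w)=\chi^\lambda(T_{w^{-1}})$ for \emph{all} $w\in\symm_n$. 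This buys a strictly stronger conclusion and avoids the conjugacy theorem, at the cost of relying on the Geck--Rouquier characterization and Ram's projector formula.
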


\begin{proof}
Geck and Pfeiffer show in \cite[Theorem 1.1(b)]{GePf93irrchar} that if $w,w' \in \symm_n$ are both of minimal length in their conjugacy class, then $T_{w}$ and $T_{w'}$ are conjugate in $\HH_n(q)$.
In particular, $\chi(T_w) = \chi(T_{w'})$ for any character $\chi$ of $\HH_n(q)$. In $\symm_n$, any element is conjugate to its inverse and $\ell(w) = \ell(w^{-1})$. The claim thus follows. 
\end{proof}

We can now describe the coefficient of $T_{\sfc}$ in $\pi_\lambda$.

\begin{lemma}\label{lemma:coefficient_longcycle_isotypic}
For $\lambda \vdash n$, the coefficient of $T_{\sfc}$ in the isotypic projector $\pi_{\lambda}$ is
\begin{align*}
[T_{\sfc}] \, \pi_{\lambda}
&=\begin{cases}
    \displaystyle \frac{ (-1)^k q^{\binom{k}{2}}}{\qfact{n}} \qbinom{n-1}{k}{q} &\text{ if } \lambda =(n-k,1^k)\\[1ex]
    0& \text{otherwise}.
\end{cases}
\end{align*}
\end{lemma}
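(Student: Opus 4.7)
The plan is to invoke Ram's explicit formula for the isotypic projector in the natural basis, namely
\[
\pi_\lambda = \frac{f^\lambda(q)}{\qfact{n}} \sum_{w \in \symm_n} \chi^\lambda(T_w)\, q^{-\ell(w)}\, T_{w^{-1}},
\]
from \cref{prop:projectorformula}. Extracting the coefficient of $T_{\sfc}$ is then immediate: only the term with $w^{-1} = \sfc$ survives, so
\[
[T_{\sfc}]\,\pi_\lambda = \frac{f^\lambda(q)}{\qfact{n}}\, \chi^\lambda(T_{\sfc^{-1}})\, q^{-\ell(\sfc^{-1})}.
\]
Since $\sfc = \ncycle$ is of minimum length $n-1$ in its conjugacy class, and taking inverses preserves both length and conjugacy class in $\symm_n$, \cref{lemma:character_minlength} gives $\chi^\lambda(T_{\sfc^{-1}}) = \chi^\lambda(T_{\sfc})$. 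Thus the computation reduces to plugging in two already-computed ingredients.

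The first ingredient is the character value $\chi^\lambda(T_{\sfc})$, which \cref{ex:qchar-longcycle} gives as $(-1)^k q^{n-k-1}$ when $\lambda = (n-k,1^k)$ is a hook, and $0$ otherwise. This instantly yields the vanishing statement for non-hook $\lambda$ and leaves only the hook case. The second ingredient is the closed form for $f^{(n-k,1^k)}(q)$ from \cref{ex:qhookdimension}, namely
\[
f^{(n-k,1^k)}(q) = q^{\binom{k+1}{2}}\, \frac{\qfact{n-1}}{\qfact{k}\,\qfact{n-k-1}} = q^{\binom{k+1}{2}} \qbinom{n-1}{k}{q}.
\]

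Assembling the three pieces for $\lambda = (n-k,1^k)$:
\[
[T_{\sfc}]\,\pi_{(n-k,1^k)} = \frac{q^{\binom{k+1}{2}} \qbinom{n-1}{k}{q}}{\qfact{n}} \cdot (-1)^k q^{n-k-1} \cdot q^{-(n-1)} = \frac{(-1)^k\, q^{\binom{k+1}{2}-k}}{\qfact{n}} \qbinom{n-1}{k}{q},
\]
and the identity $\binom{k+1}{2} - k = \binom{k}{2}$ finishes the proof.

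There is essentially no obstacle; the work is entirely in assembling ingredients already available in the paper. The only points requiring a moment of care are (i) tracking the $w \leftrightarrow w^{-1}$ substitution in Ram's formula so that one correctly picks off the $w = \sfc^{-1}$ summand, and (ii) justifying the replacement $\chi^\lambda(T_{\sfc^{-1}}) = \chi^\lambda(T_{\sfc})$, which is exactly what \cref{lemma:character_minlength} is designed to supply.
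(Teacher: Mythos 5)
Your proof is correct and is essentially identical to the paper's own argument: both extract the $w=\sfc^{-1}$ term from Ram's formula (\cref{prop:projectorformula}), invoke \cref{lemma:character_minlength} to replace $\chi^\lambda(T_{\sfc^{-1}})$ by $\chi^\lambda(T_{\sfc})$, and then substitute the character value from \cref{ex:qchar-longcycle} and the hook dimension from \cref{ex:qhookdimension}, finishing with $\binom{k+1}{2}-k=\binom{k}{2}$. No gaps.
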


\begin{proof}
Note that the length of the long cycle $\sfc$ is $n-1$,
and by \cref{ex:qhookdimension}
\[ f^{(n-k,1^k)}(q) = \frac{q^{\binom{k+1}{2}} \qfact{n-1}}{\ \qfact{k} \ \qfact{n-k-1}}.  \]
Since $\sfc$ is of minimal length in the conjugacy class indexed by the partition $(n)$,
we have that $\chi^\lambda(T_w) = \chi^\lambda(T_{w^{-1}})$
by \cref{lemma:character_minlength}.
By \cref{prop:projectorformula},
we have $[T_{\sfc}] \pi_\lambda = 0$ if $\lambda \neq (n - k, 1^k)$;
and when $\lambda = (n - k, 1^k)$,
\begin{align*}
      [T_{\sfc}] \pi_{(n-k,1^k)}
      &= \tfrac{f^\lambda(q)}{\qfact{n}} \ \chi^\lambda(T_{\sfc}) \ q^{-(n-1)} \\
      & = \frac{q^{\binom{k+1}{2}} }{\qint{n} \ \qfact{k} \ \qfact{n-k-1}} \ \chi^{(n-k,1^k)}(T_{\sfc}) \ q^{-(n-1)} \\[1ex]
      &=\frac{q^{\binom{k+1}{2}} \cdot (-1)^k q^{n-k-1} \cdot q^{-n+1} }{\qint{n} \ \qfact{k} \ \qfact{n-k-1}}
      & (\text{\cref{ex:qchar-longcycle}}) \\[1ex]
      &= \frac{(-1)^k q^{\binom{k+1}{2} + n-k-1 -n +1}  }{\qint{n} \ \qfact{k} \ \qfact{n-k-1}} \\[1ex]
      &= \frac{(-1)^k q^{\binom{k}{2}} }{\qfact{n}} \qbinom{n-1}{k}{q}.
      &\qedhere
\end{align*}
\end{proof}

The above combine to produce the following expression for
the coefficient of $T_{\sfc}$ in $f(\Xi_n(q))$.

\begin{proposition}
    \label{prop:isoprojtechnique}
    Let $f$ be any symmetric function.
    Then the coefficient of $T_{\sfc}$ in $f(\Xi_n(q))$ is
    \begin{align*}
        [T_{\sfc}] \, f(\Xi_n(q))
        & = \frac{1}{\qfact{n}} \ \sum_{k = 0}^{n - 1} (-1)^k q^{\binom{k}{2}} \qbinom{n-1}{k}{q} f\big(\qcont((n - k, 1^k))\big).
    \end{align*}
\end{proposition}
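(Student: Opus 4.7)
The plan is to simply combine the two preceding lemmas. \cref{lemma:isoprojtechnique} already expresses $f(\Xi_n(q))$ as a linear combination of isotypic projectors, and \cref{lemma:coefficient_longcycle_isotypic} already computes the coefficient of $T_{\sfc}$ in each $\pi_\lambda$, so all that remains is to take the coefficient of $T_{\sfc}$ on both sides of the expansion and assemble the pieces.

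Concretely, I would first apply \cref{lemma:isoprojtechnique} to write
\[
    f(\Xi_n(q)) = \sum_{\lambda \vdash n} f\big(\qcont(\lambda)\big) \, \pi_\lambda.
\]
The map $z \mapsto [T_{\sfc}]z$ is $\kk$-linear on $\HH_n(q)$ (and $f(\qcont(\lambda))$ is a scalar, independent of $T_{\sfc}$), so extracting the coefficient of $T_{\sfc}$ from both sides yields
\[
    [T_{\sfc}] f(\Xi_n(q)) = \sum_{\lambda \vdash n} f\big(\qcont(\lambda)\big) \, [T_{\sfc}] \pi_\lambda.
\]
Next I would invoke \cref{lemma:coefficient_longcycle_isotypic}, which tells us that $[T_{\sfc}]\pi_\lambda$ vanishes unless $\lambda$ is a hook $(n-k,1^k)$ for some $0 \leq k \leq n-1$, in which case the coefficient equals $(-1)^k q^{\binom{k}{2}} \qbinom{n-1}{k}{q} / \qfact{n}$. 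Substituting these values and reindexing the sum by $k$ instead of $\lambda$ immediately gives the stated formula.

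The main (modest) obstacle has already been handled upstream: \cref{lemma:isoprojtechnique} required knowing that the $\pi_T$ are simultaneous eigenvectors of the $J_k(q)$ with $q$-content eigenvalues, and \cref{lemma:coefficient_longcycle_isotypic} required both Ram's explicit formula for $\pi_\lambda$ in the natural basis (\cref{prop:projectorformula}), the vanishing of $\chi^\lambda(T_{\sfc})$ off hook shapes (\cref{ex:qchar-longcycle}), and the Geck--Pfeiffer fact that $\chi^\lambda(T_w) = \chi^\lambda(T_{w^{-1}})$ for $w$ of minimal length in its conjugacy class (used via \cref{lemma:character_minlength}, since the expression for $\pi_\lambda$ involves $T_{w^{-1}}$ rather than $T_w$). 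Given those ingredients, the present proposition is a one-line consequence, amounting to a linearity argument followed by restriction of the sum to hook partitions.
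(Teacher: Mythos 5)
Your proposal is correct and matches the paper's proof exactly: the paper likewise applies \cref{lemma:isoprojtechnique}, extracts the coefficient of $T_{\sfc}$ by linearity, and then substitutes the values from \cref{lemma:coefficient_longcycle_isotypic}, which restricts the sum to hook shapes $(n-k,1^k)$. No gaps.
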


\begin{proof}
    This follows by combining \cref{lemma:isoprojtechnique,lemma:coefficient_longcycle_isotypic}:
    \begin{equation*}
        \begin{aligned}[b]
            [T_{\sfc}] \, f(\Xi_n(q))
            & = \sum_{\lambda \vdash n} f(\qcont(\lambda)) \, [T_{\sfc}] \, \pi_\lambda \\
            & = \sum_{k = 0}^{n - 1} f(\qcont((n - k, 1^k)) \, [T_{\sfc}] \, \pi_{(n - k, 1^k)} \\
            & = \frac{1}{\qfact{n}} \ \sum_{k = 0}^{n - 1} (-1)^k q^{\binom{k}{2}} \qbinom{n-1}{k}{q} f\big(\qcont((n - k, 1^k))\big).
        \end{aligned}
        \qedhere
    \end{equation*}
\end{proof}

\subsubsection{Prototypical example for the symmetric group}
\label{sec:prototypical-example-isotypic-projectors-technique}

We demonstrate how \cref{prop:isoprojtechnique} can be used to prove
\cref{thm:introsymmetricsummary}(3) in the case of the symmetric group ($q = 1$).
To the best of our knowledge, this proof is original,
and it serves as a blueprint for our proof
of its $q$-analogue in the Hecke algebra (\cref{thm:HeckeJacksonMs}).

\begin{proof}[Proof of \cref{thm:introsymmetricsummary}(3)]
    Let $F_n({\bf t})$ be the right-hand-side of \cref{eq:JacksonSn},
    where ${\bf t} = (t_1, t_2, \ldots, t_m)$.
    Note that we can express $F_n({\bf t})$ as the coefficient
    of $\sfc$ of a generating function with coefficients
    in $\kk[\symm_n]$:
    \begin{equation} \label{eq: F in terms of Es}
        F_n({\bf t}) = [\sfc] \ \tilde{E}_n(t_1) \, \tilde{E}_n(t_2) \, \cdots \, \tilde{E}_n(t_m),
    \end{equation}
    where $\tilde{E}_n$ is as in \cref{eqn:Etilden}; that is,
    \begin{equation}
        \label{eq:En-defn}
        \tilde{E}_n(t_j) = (t_j + J_1) (t_j + J_2) \cdots (t_j + J_n)
        = \sum_{i=0}^{n} e_i(\Xi_n) \, t_j^{n-i}.
    \end{equation}
    Combining \cref{eq: F in terms of Es,eq:En-defn}, we have
    \begin{equation}
        \label{eq:blueprint-F-expanded}
        \begin{aligned}[b]
            F_n({\bf t})
            & =
            [\sfc]
            \Big( \tilde{E}_n(t_1) \ \tilde{E}_n(t_2) \ \cdots \ \tilde{E}_n(t_n) \Big)
            \\ & =
            [\sfc]
            \Big(
                \sum_{\alpha_1 = 0}^{n} e_{\alpha_1}(\Xi_n) t_1^{n - \alpha_1}
                \sum_{\alpha_2 = 0}^{n} e_{\alpha_2}(\Xi_n) t_2^{n - \alpha_2}
                \cdots
                \sum_{\alpha_m = 0}^{n} e_{\alpha_m}(\Xi_n) t_m^{n - \alpha_m}
            \Big)
            \\ & =
            \sum_{0 \leq \alpha_1, \alpha_2, \ldots, \alpha_m \leq n}
            \underbrace{[\sfc] \, e_{\alpha_1}(\Xi_n) \cdots e_{\alpha_m}(\Xi_n)}_{[\sfc] \, e_\alpha(\Xi_n)} \, t_1^{n - \alpha_1} \cdots t_m^{n - \alpha_m}
        \end{aligned}
    \end{equation}
    Applying \cref{prop:isoprojtechnique} to $[\sfc] \, e_{\alpha}(\Xi_n)$ yeilds,
    \begin{equation}
        \label{eq:blueprint-F-using-tilde-E}
        \begin{aligned}[b]
            F_n({\bf t})
            & =
            \sum_{\alpha} \Bigg(\frac{1}{n!} \sum_{k=0}^{n-1} (-1)^k \binom{n-1}{k} e_\alpha\big( \cont((n{-}k,1^k)) \big)\Bigg) \, t_1^{n - \alpha_1} \cdots t_m^{n - \alpha_m}
            \\ & =
            \frac{1}{n!} \sum_{k=0}^{n-1} (-1)^k \binom{n-1}{k}
            \prod_{j=1}^{m}
            \Bigg(\sum_{\alpha_j = 0}^{n} e_{\alpha_j}\big(\cont((n{-}k,1^k))\big) t_j^{n - \alpha_j}\Bigg)
            \\ & =
            \frac{1}{n!} \sum_{k=0}^{n-1} (-1)^k \binom{n-1}{k} \tilde{E}_{n,k}(t_1) \cdots \tilde{E}_{n,k}(t_m),
        \end{aligned}
    \end{equation}
    where
    \begin{equation}
        \tilde{E}_{n, k}(t) \ := \ \sum_{j = 0}^{n} e_{j}\big(\cont((n{-}k,1^k))\big) t^{n - j}
        \ = \quad \smashoperator{\prod_{c \in \cont((n{-}k, 1^k))}} \quad \big(t_{j} + c \big).
    \end{equation}

    Next, we compute $\tilde{E}_{n, k}(t)$.
    We have that
    \begin{equation*}
        \cont\big((n{-}k,1^k)\big) = \big\{\! \big\{
            -k, -k + 1, \ldots, -k + (n - 2), -k + (n - 1)
        \big\}\! \big\}
    \end{equation*}
    since the coordinates of the cells in $(n{-}k, 1^k)$
    are (reading bottom-to-top along the first column
    and then left-to-right along the first row):
    \begin{equation*}
        \lefteqn{\overbrace{\phantom{(k{+}1, 1), \ (k, 1), \ \ldots, \ (2, 1), \ (1, 1)}}^{\text{first column}}}
        (k{+}1, 1), \ (k, 1), \ \ldots, \ (2, 1), \ \underbrace{(1, 1), \ (1, 2), \ \ldots, \ (1, n{-}k)}_{\text{first row}}.
    \end{equation*}
    Hence,
    \begin{equation}
        \label{eq:tildeE-symm}
        \tilde{E}_{n, k}(t)
        = \prod_{c \in \cont((n{-}k, 1^k))} \big(t_{j} + c \big)
        = (t_{j} - k)^{(n)},
    \end{equation}
    where $(t_{j} - k)^{(n)}$ denotes the \emph{rising factorial}
    \begin{equation*}
        (t - k)^{(n)} := (t - k) (t - k + 1) \cdots (t + (n - k - 1)).
    \end{equation*}
    Now we perform a change of basis,
    expressing each rising factorial $(t - k)^{(n)}$ in
    the \emph{binomial basis} $\binom{t}{n - r}$.
    Explicitly, by the \emph{Chu--Vandermonde identity} we have that
    \begin{equation}
        \label{eq:chu-vandermonde}
        (t-k)^{(n)}
        = n!\sum_{r=k+1}^n \binom{n-1-k}{r-1-k}\binom{t}{r}
        = n!\sum_{r = 0}^{n - k - 1} \binom{n-1-k}{r}\binom{t}{n - r}.
    \end{equation}

    Therefore,
    \begin{align*}
        F_n({\bf t})
        & =
        \frac{1}{n!} \sum_{k=0}^{n-1} (-1)^k \binom{n-1}{k} \, (t_1 - k)^{(n)} \, (t_2 - k)^{(n)} \, \cdots \, (t_m - k)^{(n)}
        \\ & =
        n!^{m-1}
        \sum_{k=0}^{n-1}
        \sum_{0 \leq r_1,\ldots,r_m \leq n-k-1}
        (-1)^k \binom{n-1}{k} \left(\prod_{i = 1}^{m} \binom{n-1-k}{r_i}\right)
        \binom{t_1}{n - r_1} \cdots \binom{t_m}{n - r_m}
        \\ & =
        n!^{m-1}
        \sum_{0 \leq r_1,\ldots,r_m \leq n-1}
        \left(
        \sum_{k=0}^{n-1 - \max_i\{r_i\}}
        (-1)^k \binom{n-1}{k} \prod_{i = 1}^{m} \binom{n-1-k}{r_i}
        \right)
        \binom{t_1}{n - r_1} \cdots \binom{t_m}{n - r_m}.
    \end{align*}
    Using an inclusion-exclusion argument (e.g. see \S4.2.2 or \cite[Lemma 2.2.23]{moralesPhd}),
    the alternating sum in parentheses above can be shown to be
    \begin{equation} \label{eq:rel Ms}
        \sum_{k=0}^{n-1 - \max_i\{r_i\}}
        (-1)^k \binom{n-1}{k} \prod_{i = 1}^{m} \binom{n-1-k}{r_i}
        =
        \M^{n-1}_{(r_1, r_2, \ldots, r_m)}.
    \end{equation}
    Thus,
    \begin{equation*}
        F_n({\bf t})
        = n!^{m-1} \sum_{0 \leq r_1, \ldots, r_m \leq n - 1} \M^{n - 1}_{(r_1, \ldots, r_m)}
        \binom{t_1}{n - r_1} \cdots \binom{t_m}{n - r_m}.
        \qedhere
    \end{equation*}
\end{proof}

\begin{remark}
The coefficients $\M_{\bf r}^n$ satisfy several natural cancellation-free recurrences. For example, consider whether each set $S_i$ in the tuple $(S_1,\ldots,S_m)$ contains the element $n$; we see that  \begin{equation} \label{eq:key recurrence M}
\M^n_{\bf r} \,=\, \sum_{\varnothing \neq T\subseteq [m]} \M^{n-1}_{{\bf r} - {\bf e}_T},
\end{equation}
where ${\bf e}_T$ is the indicator vector of the set $T$. We will prove a $q$-analogue of \cref{eq:key recurrence M} in \cref{prop:recursion-qMs}.

Alternatively, by 
\begin{enumerate}
    \item first picking $(S_1,\ldots,S_{m-1})$ (there are $\binom{n}{i}\cdot \M^{i}_{(r_1,\ldots,r_{m-1})}$ choices if the cardinality of $T_{m-1} := S_1 \cup \cdots \cup S_{m-1}$ is $i$),
    \item and then choosing $S_m$ (fully determined by its intersection with $T_{m-1}$, which must have cardinality $r_m-n+i$),
\end{enumerate}
we obtain the following recursion:
\begin{equation}
\label{eq:Bolan recurrence}
\M^n_{(r_1,\ldots,r_m)} = \sum_{i=0}^n \binom{n}{i}\binom{i}{r_m-n+1} \M^i_{(r_1,\ldots,r_{m-1})}.
\end{equation}
We give a $q$-analogue of \cref{eq:Bolan recurrence} in \cref{prop:positive recurrence Ms}, originally due to Matthew Bolan.
\end{remark}

\begin{remark} \label{rem:gf Ms}
The coefficients $\M_{\bf r}^n$ also have the following generating polynomial:
\begin{equation} \label{eq:gf for Ms}
\sum_{0\leq r_1,\ldots, r_m\leq n} \M^n_{\bf r} x_1^{n-r_1}\cdots x_m^{n-r_m} = \Bigl( (1+x_1)\cdots (1+x_m)-x_1\cdots x_m  \Bigr)^n.
\end{equation}
In fact, this is how these coefficients were first implicitly defined by Jackson in \cite[Thm. 4.3]{Jackson0}. This identity can be obtained using the recurrence \cref{eq:key recurrence M}.
\end{remark}

\subsection{Proof of \cref{thm:intro_q_summary_thm}(1): $a_q(n;j)$ generating function}\label{ssec:a-q-proof}

We will now prove \cref{thm:intro_q_summary_thm}(1); this is a $q$-analogue of \cref{thm:introsymmetricsummary}(1), which
describes the generating function for $a(n;j)$, the number of factorizations of $\sfc$
into $j$ transpositions.

Recall that we define
\begin{align}
    a_q(n;k) :=& [T_{\sfc}] \,\, {\color{RoyalBlue} e_{1^k}(\Xi_n(q))}  \\
    =& [T_{\sfc}] \,\,
    {\color{RoyalBlue}
        \Big( \hskip3pt \smashoperator{\sum_{1\leq i<j\leq n}} \ q^{i-j} T_{(i\,j)} \hskip3pt \Big)^k
    }. \label{eq:a-in-hecke}
\end{align}
Note that \cref{eq:a-in-hecke} is obtained by evaluating the polynomial $e_1^k(x_1, \ldots, x_n)$ at $J_1(q), \ldots, J_n(q)$.

\begin{example} \label{ex:def aq}
For $n=3$ and $k=2$ we have that 
\begin{align*}
a_q(3,2) 
&= [T_{\sfc[3]}]\,  \big( J_2(q) + J_3(q) \big)^2 \\
&= [T_{\sfc[3]}]\, \left( q^{-1}T_{(1,2)} + q^{-2}T_{(1,3)}+q^{-1}T_{(2,3)}\right)^2\\
&= [T_{\sfc[3]}]\,\Big(q^{-2} T_{(1,2)}^2 + q^{-2} T_{(1,2)}T_{(2,3)} + q^{-3}T_{(1,2)}T_{(1,3)} + q^{-2}T_{(2,3)}T_{(1,2)} + q^{-2}T_{(2,3)}^2 + \\
& \hspace*{.3\linewidth} q^{-3}T_{(2,3)}T_{(1,3)} + q^{-3}T_{(1,3)}T_{(1,2)} + q^{-3}T_{(1,3)}T_{(2,3)} + q^{-4} T_{(1,3)}^2 \Big)\\
&= 0 + q^{-2} + 0 + 0 +0+ q^{-2}+q^{-2} + q^{-2} + (q^3-2q^{-2}+q^{-1})\\
&= q^{-3}+q^{-2}+q^{-1}\\
&= q^{-3}\ [3]_q 
\end{align*}
\end{example}

We will use the following lemma to compute the generating function for $a_q(n;j)$ in \cref{thm:tree conj} below.
\begin{lemma}\label{lemma:q-form-prodfac}
    For all integers $0 \leq k < n$,
    \begin{equation}
    \label{eq:q-form-prodfac}
        \qfact{n-1-k} \qfact{k}
        = \dfrac{(-1)^k q^{\binom{n}{2} + \binom{k}{2}}}{(1-q)^{n-1}} \prod_{ \substack{0 \leq j \leq n-1 \\ j \neq k} } (q^{-j} - q^{-k}).
    \end{equation}
\end{lemma}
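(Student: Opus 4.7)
The proof is a direct manipulation of products; nothing deep is hidden in it. My plan is to rewrite the right-hand side so that every factor is put in the form $q^i - 1$, and then match it against the standard expansion of the $q$-factorials on the left.

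First, I would pull out a factor of $q^{-k}$ from each term of the product, writing $q^{-j} - q^{-k} = q^{-k}(q^{k-j} - 1)$, which yields an overall prefactor of $q^{-k(n-1)}$. Then I would split the remaining product at $j = k$: for $0 \le j < k$ the exponents $k-j$ run through $1, 2, \ldots, k$, giving $\prod_{i=1}^{k}(q^i - 1)$; for $k < j \le n-1$ the exponents $k-j$ are $-1, -2, \ldots, -(n-1-k)$, and using $q^{-i} - 1 = -q^{-i}(q^i - 1)$ I rewrite this half as
\[
(-1)^{n-1-k} q^{-\binom{n-k}{2}} \prod_{i=1}^{n-1-k}(q^i - 1).
\]

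Combining these with the claimed prefactor $(-1)^k q^{\binom{n}{2}+\binom{k}{2}} / (1-q)^{n-1}$ on the right-hand side, I would get
\[
\text{RHS} \;=\; \frac{(-1)^{n-1}\, q^{E}}{(1-q)^{n-1}}\;\prod_{i=1}^{k}(q^i-1)\;\prod_{i=1}^{n-1-k}(q^i-1),
\]
where $E = \binom{n}{2} + \binom{k}{2} - k(n-1) - \binom{n-k}{2}$. On the other hand, expanding the $q$-factorials on the left gives
\[
\qfact{n-1-k}\qfact{k} \;=\; \frac{1}{(q-1)^{n-1}}\prod_{i=1}^{k}(q^i-1)\prod_{i=1}^{n-1-k}(q^i-1) \;=\; \frac{(-1)^{n-1}}{(1-q)^{n-1}}\prod_{i=1}^{k}(q^i-1)\prod_{i=1}^{n-1-k}(q^i-1).
\]

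So the entire lemma reduces to verifying the identity $E = 0$. Using $\binom{n}{2} - \binom{n-k}{2} = \tfrac{k(2n-k-1)}{2}$, I get $E = \tfrac{k(2n-k-1)}{2} + \tfrac{k(k-1)}{2} - k(n-1) = k(n-1) - k(n-1) = 0$. There is no real obstacle; the only place one could slip is in bookkeeping the sign $(-1)^{n-1-k}$ against the prefactor sign $(-1)^k$ and the sign of $(q-1)^{n-1}$ versus $(1-q)^{n-1}$, so I would present those carefully.
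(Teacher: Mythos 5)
Your proof is correct and is essentially the paper's argument run in the opposite direction: the paper rewrites each factor $1-q^i$ of the $q$-factorials as a multiple of $(q^{-j}-q^{-k})$, while you rewrite each $(q^{-j}-q^{-k})$ as a multiple of $(q^i-1)$; the sign and exponent bookkeeping ($E=0$) checks out.
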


\begin{proof}
    For $1 \leq i \leq n-1-k$, write $1 - q^i = q^{k+i} (q^{-(k+i) - q^{-k})}$ to obtain
    \[
        \qfact{n-1-k} = \frac{q^{\sum_{j = k+1}^{n-1} j}}{(1-q)^{n-1-k}} \prod_{k < j \leq n-1} (q^{-j} - q^{-k}).
    \]
    Similarly, for $1 \leq i \leq k$ write $1 - q^i = -q^{k} (q^{-(k-i) - q^{-k})}$ to obtain
    \[
        \qfact{k} = \frac{(-1)^k q^{k^2}}{(1-q)^k} \prod_{0 \leq j < k} (q^{-j} - q^{-k}).
    \]
    The result follows since
    \[ k^2 = \sum_{j = 1}^k j + \sum_{j = 1}^{k-1} j \]
    and therefore
    \[
     k^2 + \sum_{j = k+1}^{n-1} j   = \binom{n}{2} + \binom{k}{2}. \qedhere \]
\end{proof}

We now give a closed formula for the generating function of the numbers $a_q(n;j)$.

\begin{theorem}
\label{thm:tree conj}
    For a fixed $n \in \Z_{\geq 0}$, the ordinary generating function of the numbers $\{a_q(n;j)\}_j$ is
    \begin{equation}
        \label{eq:q gen fcn for e's}
         \sum_{j \geq 0} a_q(n;j) t^j
      = \dfrac{q^{-\binom{n}{2}} \qint{n}^{n-2} t^{n-1}}{\prod_{j=0}^{n-1} \left( 1 - \left( \tfrac{n - q^{-j}\qint{n}}{1-q} \right)t \right)}.
    \end{equation}
\end{theorem}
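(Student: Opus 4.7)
The plan is to apply the isotypic-projector technique of \cref{prop:isoprojtechnique} to $f = e_1^j$ and then sum over $j$ to form the generating function. By \cref{prop:isoprojtechnique},
\[
    a_q(n;j) \;=\; \frac{1}{\qfact{n}} \sum_{k=0}^{n-1} (-1)^k q^{\binom{k}{2}} \qbinom{n-1}{k}{q} \, e_1\bigl(\qcont((n-k,1^k))\bigr)^j.
\]
The first step is therefore to compute $e_1$ on the $q$-content multiset of the hook $(n-k,1^k)$. Its ordinary contents are the integers $-k, -k+1, \ldots, n-k-1$, so $e_1(\qcont((n-k,1^k))) = \sum_{i=-k}^{n-k-1}\qint{i}$; a short telescoping calculation using $\qint{i} = (q^i-1)/(q-1)$ yields the closed form
\[
    \alpha_k \;:=\; \frac{n - q^{-k}\qint{n}}{1-q},
\]
which is precisely the reciprocal of the linear factors appearing in the denominator of the claimed generating function.

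Next I would exchange the order of the sums in $j$ and $k$ and sum the resulting geometric series to obtain
\[
    \sum_{j \geq 0} a_q(n;j) \, t^j \;=\; \frac{1}{\qfact{n}} \sum_{k=0}^{n-1} \frac{(-1)^k q^{\binom{k}{2}} \qbinom{n-1}{k}{q}}{1 - \alpha_k t}.
\]
The crux of the argument is to recognize this as a partial-fraction decomposition of the claimed rational function. Clearing the common denominator $\prod_{j=0}^{n-1}(1-\alpha_j t)$ reduces the task to the polynomial identity
\[
    \sum_{k=0}^{n-1} (-1)^k q^{\binom{k}{2}} \qbinom{n-1}{k}{q} \prod_{\substack{0 \leq j \leq n-1 \\ j \neq k}} (1-\alpha_j t) \;=\; \qfact{n} \, q^{-\binom{n}{2}} \, \qint{n}^{n-2} \, t^{n-1},
\]
between polynomials in $t$ of degree at most $n-1$.

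To verify this identity I would substitute $t = 1/\alpha_\ell$ for each $\ell \in \{0, 1, \ldots, n-1\}$, which kills every term on the left except the one with $k = \ell$. Using $\alpha_\ell - \alpha_j = \qint{n}(q^{-j} - q^{-\ell})/(1-q)$ and rewriting $\alpha_\ell^{n-1}\prod_{j \neq \ell}(1-\alpha_j/\alpha_\ell) = \prod_{j \neq \ell}(\alpha_\ell - \alpha_j)$, each point-evaluation collapses into precisely the product formula of \cref{lemma:q-form-prodfac}. The main obstacle is the bookkeeping of the $q$-powers and $\qint{n}$-powers --- specifically, tracking how $\qbinom{n-1}{\ell}{q} \cdot \qfact{\ell}\qfact{n-1-\ell} = \qfact{n-1}$, the $(-1)^\ell$ from \cref{lemma:q-form-prodfac}, and the factor $q^{\binom{n}{2}+\binom{\ell}{2}}$ there combine to leave exactly $\qfact{n} \, q^{-\binom{n}{2}} \qint{n}^{n-2}$. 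Since the $\alpha_\ell$ are pairwise distinct in $\mathbb{Q}(q)$ and both sides of the polynomial identity have degree at most $n-1$, agreement at $n$ points forces identical equality. The specialization $a_q(n;n-1) = q^{-\binom{n}{2}} \qint{n}^{n-2}$ then falls out by evaluating the denominator at $t=0$ and reading off the coefficient of $t^{n-1}$.
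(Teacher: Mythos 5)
Your proposal is correct and follows essentially the same route as the paper's proof: apply \cref{prop:isoprojtechnique} to the geometric series for $\sum_j e_1^j t^j$, compute $e_1(\qcont((n-k,1^k))) = \frac{n-q^{-k}\qint{n}}{1-q}$, clear denominators to reduce to a degree-$(n-1)$ polynomial identity in $t$, and verify it at the $n$ points $t = 1/\alpha_\ell$ using \cref{lemma:q-form-prodfac}. No meaningful differences to report.
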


\begin{proof}
We will make use of \cref{prop:isoprojtechnique},
which requires evaluating the symmetric function $e_1$
at the $q$-content of the hook partition $(n - k, 1^k)$.
Since
\[
    \qcont((n-k, 1^k)) = \big\{\!\!\big\{ \qint{j} : -k \leq j \leq n - k - 1 \big\}\!\!\big\},
\]
we have
\[
    e_1\big(\qcont((n-k,1^k))\big)
    = \sum_{j = -k}^{n-k-1} \qint{j}
    =  \frac{\sum_{j = -k}^{n-k-1}(1-q^j)}{1-q}
    = \frac{n-q^{-k}\qint{n}}{1-q}.
\]
We apply \cref{prop:isoprojtechnique} to the geometric series
\[
    \sum_{j\geq 0} e_1(x_1,\ldots,x_n)^j t^j = \frac{1}{1-e_1(x_1,\ldots,x_n) t}
\]
to obtain
\[
    \sum_{j \geq 0} a_q(n;j) t^j
    = \frac{1}{\qfact{n}} \sum_{k=0}^{n-1} (-1)^kq^{\binom{k}{2}} \qbinom{n-1}{k}{q} \cdot \frac{1}{1-\left(\frac{n - q^{-k}\qint{n}}{1-q}\right)t}.
\]
We would thus like to prove that
\[
    \frac{1}{\qfact{n}}
    \sum_{k=0}^{n-1} (-1)^kq^{\binom{k}{2}} \qbinom{n-1}{k}{q} \cdot \frac{1}{1-\left(\frac{n - q^{-k}\qint{n}}{1-q}\right)t}
    = \dfrac{q^{-\binom{n}{2}} \qint{n}^{n-2} t^{n-1}}{\prod_{j=0}^{n-1} \left( 1 - \left( \tfrac{n - q^{-j}\qint{n}}{1-q} \right)t \right)}.
\]

After multiplying both sides by
\[ \prod_{j=0}^{n-1} \left( 1 - \left( \frac{n - q^{-j}\qint{n}}{1-q} \right)t \right), \]
this is equivalent to proving the following equality:
\begin{equation}\label{eq:restating-a-equality}
    \frac{1}{\qfact{n}}
    \sum_{k=0}^{n-1} \Bigg( (-1)^k q^{\binom{k}{2}} \qbinom{n-1}{k}{q} \cdot \prod_{ \substack{0 \leq j \leq n-1 \\ j \neq k} } \left( 1 - \left( \frac{n - q^{-j}\qint{n}}{1-q} \right)t \right) \Bigg) = q^{-\binom{n}{2}} \qint{n}^{n-2} t^{n-1}.
\end{equation}
Since \cref{eq:restating-a-equality} is an equality of polynomials in $t$ of degree $n-1$, it suffices to verify it for $n$ different values of $t$. Substituting
\[ t = \frac{1-q}{n - q^{-k}\qint{n}} \] for $k = 0,1,\ldots,n-1$ in the left-hand-side, all but one of the summands become $0$, and the expression reduces to
\begin{align*}
    & \frac{(-1)^k q^{\binom{k}{2}}}{\qfact{n}} \qbinom{n-1}{k}{q} \cdot \prod_{ \substack{0 \leq j \leq n-1 \\ j \neq k} } \left( 1 - \left( \frac{n - q^{-j}\qint{n}}{n - q^{-k}\qint{n}} \right) \right)
    \\
    & = \frac{(-1)^k q^{\binom{k}{2}}}{\qint{n}\qfact{k}\qfact{n-k-1}} \prod_{ \substack{0 \leq j \leq n-1 \\ j \neq k} } \left( \frac{(q^{-j} - q^{-k})\qint{n}}{n - q^{-k}\qint{n}} \right) \\
    & = q^{-\binom{n}{2}} \qint{n}^{n-2} \left( \frac{1-q}{n - q^{-k}\qint{n}} \right)^{n-1},
\end{align*}
which is exactly the right-hand-side of \cref{eq:restating-a-equality} evaluated at $t = \tfrac{1-q}{n - q^{-k}\qint{n}}$. The last equality above follows from \cref{lemma:q-form-prodfac}.
\end{proof}

Extracting the coefficient of the lowest-degree nonzero term in \cref{thm:tree conj}, we deduce the following result (see \cref{ex:def aq}).

\begin{corollary}
    For all $n \in \Z_{\geq 0}$,
    \begin{equation}
        \label{eq: q identity e's}
        a_q(n;n-1)
        = q^{\binom{n - 2}{2} - 1}(\qint[q^{-1}]{n})^{n-2} = q^{-\binom{n}{2}} \qint{n}^{n-2}.
    \end{equation}
\end{corollary}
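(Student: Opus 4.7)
The plan is to extract the coefficient of $t^{n-1}$ from both sides of the generating-function identity established in Theorem \ref{thm:tree conj}. Since $a_q(n;j) = 0$ for $j < n-1$ (as any factorization of $\sfc$ into transpositions needs at least $n-1$ factors, which persists in the Hecke setting because the leading $q=1$ behavior kills smaller-degree contributions, and indeed the right-hand side of \cref{eq:q gen fcn for e's} has $t^{n-1}$ as its lowest nonzero term), the coefficient of $t^{n-1}$ on the left-hand side is exactly $a_q(n;n-1)$.

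Next, I would read off this coefficient from the right-hand side
\[
\dfrac{q^{-\binom{n}{2}} \qint{n}^{n-2} t^{n-1}}{\prod_{j=0}^{n-1} \left( 1 - \left( \tfrac{n - q^{-j}\qint{n}}{1-q} \right)t \right)}.
\]
The denominator is a polynomial in $t$ with constant term $1$, so its power series expansion starts at $1 + O(t)$. Multiplying by the numerator $q^{-\binom{n}{2}}\qint{n}^{n-2} t^{n-1}$ and collecting the $t^{n-1}$ coefficient gives immediately
\[
a_q(n;n-1) = q^{-\binom{n}{2}} \qint{n}^{n-2},
\]
which is the second equality in the claim.

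For the first equality, the task reduces to the elementary identity
\[
\qint[q^{-1}]{n} = 1 + q^{-1} + q^{-2} + \cdots + q^{-(n-1)} = q^{-(n-1)} \qint{n},
\]
which I would verify by factoring out $q^{-(n-1)}$ from the left-hand sum. Raising to the $(n-2)$-th power yields $(\qint[q^{-1}]{n})^{n-2} = q^{-(n-1)(n-2)} \qint{n}^{n-2}$, so it remains to check the exponent identity
\[
\binom{n-2}{2} - 1 - (n-1)(n-2) = -\binom{n}{2}.
\]
This is a routine calculation: expanding both sides gives $-(n-2)(n+1)/2 - 1 = -(n^2-n)/2$, which holds. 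This completes the derivation of both forms.

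There is no real obstacle here; the only step requiring any care is confirming that the lowest-degree term of the generating function on the right is indeed $t^{n-1}$ (so that the denominator contributes only its constant term $1$), which is visible from the explicit formula.
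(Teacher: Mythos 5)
Your argument is correct and matches the paper's, which likewise obtains this corollary by extracting the coefficient of the lowest-degree nonzero term $t^{n-1}$ from the generating function in \cref{thm:tree conj}. The verification of the first equality via $\qint[q^{-1}]{n} = q^{-(n-1)}\qint{n}$ and the exponent identity $\binom{n-2}{2}-1-(n-1)(n-2)=-\binom{n}{2}$ is also accurate.
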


More generally, since the generating function of the homogeneous symmetric
functions $h_k$ satisfies
\[
    \sum_{k \geq 0} h_k(x_1, x_2, \ldots) t^k = \prod_{i=1}^{\infty} \frac{1}{1-x_i t}
\]
we can rewrite the denominator on the right-hand-side of~\cref{eq:q gen fcn for e's}
to obtain the following formula.

\begin{corollary}
    For all $n$ and $m$ in $\Z_{\geq 0}$, we have
    \[
        a_q(n;n+m-1) =
        q^{-\binom{n}{2}} \qint{n}^{n-2} h_m\left(\frac{n-\qint{n}}{1-q},\frac{n-q^{-1}\qint{n}}{1-q},\ldots,\frac{n-q^{-n+1}\qint{n}}{1-q}\right).
    \]
\end{corollary}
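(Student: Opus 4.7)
The plan is to read off this corollary directly from the closed form for the generating function $\sum_{j \geq 0} a_q(n; j)\, t^j$ established in \cref{thm:tree conj}. That formula factors as $q^{-\binom{n}{2}} \qint{n}^{n-2} \, t^{n-1}$ times a product of geometric series, so extracting the coefficient of a specific power of $t$ is essentially automatic.

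Concretely, set
\[
x_j := \frac{n - q^{-j} \qint{n}}{1 - q} \qquad \text{for } 0 \leq j \leq n-1,
\]
so that \cref{thm:tree conj} becomes
\[
\sum_{j \geq 0} a_q(n; j)\, t^j \;=\; q^{-\binom{n}{2}} \qint{n}^{n-2} \, t^{n-1} \cdot \prod_{j=0}^{n-1} \frac{1}{1 - x_j t}.
\]
The classical generating function identity \cref{eq:hgenfunction}, restricted to the finite alphabet $x_0, x_1, \ldots, x_{n-1}$, gives
\[
\prod_{j=0}^{n-1} \frac{1}{1 - x_j t} \;=\; \sum_{m \geq 0} h_m(x_0, x_1, \ldots, x_{n-1})\, t^m.
\]
Substituting this expansion and then taking the coefficient of $t^{n+m-1}$ on both sides yields the desired formula for $a_q(n; n+m-1)$.

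There is no real obstacle here; the only thing to check is the bookkeeping of the shift by $t^{n-1}$, which moves the extraction from the coefficient of $t^{n+m-1}$ in the full generating function to the coefficient of $t^m$ in the product $\prod_{j=0}^{n-1} (1 - x_j t)^{-1}$. The previous corollary (giving $a_q(n; n-1) = q^{-\binom{n}{2}} \qint{n}^{n-2}$) is exactly the $m = 0$ case of this argument, using $h_0 = 1$.
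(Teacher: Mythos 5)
Your proposal is correct and matches the paper's proof essentially verbatim: both arguments substitute the finite-alphabet expansion $\prod_{j=0}^{n-1}(1-x_jt)^{-1}=\sum_{m\geq 0}h_m(x_0,\ldots,x_{n-1})t^m$ into the closed form from \cref{thm:tree conj} and extract the coefficient of $t^{n+m-1}$. Nothing further is needed.
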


\begin{proof}
    By \cref{thm:tree conj} and \cref{eq:hgenfunction},
    \begin{align*}
        \sum_{j \geq 0} a_q(n;j) t^j
        & =
        q^{-\binom{n}{2}} \qint{n}^{n-2} t^{n-1} \dfrac{1}{\prod_{j=0}^{n-1} \left( 1 - \left( \tfrac{n - q^{-j}\qint{n}}{1-q} \right)t \right)}
        \\ & =
        q^{-\binom{n}{2}} \qint{n}^{n-2} \sum_{k \geq 0} h_{k}
        \left(\frac{n-\qint{n}}{1-q},\frac{n-q^{-1}\qint{n}}{1-q},\ldots,\frac{n-q^{-n+1}\qint{n}}{1-q}\right) t^{k + n - 1}.
    \end{align*}
    Extracting the coefficient of $t^{n + m - 1}$ on both sides, we have
    \begin{align*}
        a_q(n;n+m-1)
        & =
        q^{-\binom{n}{2}} \qint{n}^{n-2} h_{m}
        \left(\frac{n-\qint{n}}{1-q},\frac{n-q^{-1}\qint{n}}{1-q},\ldots,\frac{n-q^{-n+1}\qint{n}}{1-q}\right).
        \qedhere
    \end{align*}
\end{proof}

\subsection{Proof of \cref{thm:intro_q_summary_thm}(2):  generating function for $b_q(n;j)$}\label{ssec:q-b-proof}

We now prove \cref{thm:intro_q_summary_thm}(2), which gives a $q$-analogue of \cref{thm:introsymmetricsummary}(2) describing the generating function for $b(n;j)$, the number of \emph{monotone} factorizations of $\sfc$
into $j$ transpositions.

Recall that we define
\begin{align}
    b_q(n;j) :=& \, \, [T_{\sfc}] \,\, {\color{RoyalBlue} h_{j}(\Xi_n(q))} \smallskip \\
    =& \, \, [T_{\sfc}] \,\,
    {\color{RoyalBlue}
        \hskip13pt
        \smashoperator{\sum_{\substack{
                    2\leq t_1\leq\cdots \leq t_j\leq n \\
                    s_1<t_1,\ldots,s_j<t_j
            }}
        }
        \hskip5pt
        q^{\sum_{i} (s_i - t_i)} T_{(s_1\,t_1)} \cdots T_{(s_j\,t_j)}. \label{eq:b in hecke}
    }
\end{align}
\cref{eq:b in hecke} comes from plugging $J_1(q), \ldots, J_n(q)$ into the symmetric function $h_j$.

\begin{example} \label{ex:def bq}
For $n=3$ and $j=2$, we have that 
\begin{align*}
b_q(3,2) &= [T_{\sfc[3]}] \big( J_2(q)^2 + J_2(q)J_3(q) + J_3(q)^2 \big)\\
&= [T_{\sfc[3]}] \Big(q^{-2} T_{(1,2)}^2 + q^{-2} T_{(1,2)}T_{(2,3)} + q^{-3}T_{(1,2)}T_{(1,3)} + q^{-2}T_{(2,3)}T_{(1,2)} + q^{-2}T_{(2,3)}^2 + \\
& \hspace*{.3\linewidth} q^{-3}T_{(2,3)}T_{(1,3)} + q^{-3}T_{(1,3)}T_{(1,2)} + q^{-3}T_{(1,3)}T_{(2,3)} + q^{-4} T_{(1,3)}^2 \Big)\\
&= 0 + q^{-2} + 0 +0+ q^{-2} + q^{-2} + (q^3-2q^{-2}+q^{-1}) \\
&= q^{-3}+q^{-1} = q^{-3} C_2(q).
\end{align*}
\end{example}

To provide a closed formula for the generating function for $b_q(n;j)$, we will use the following $q$-analogue of the Vandermonde identity.

\begin{lemma}[$q$-Chu--Vandermonde identity]
    \label{lemma:qvandermonde}
    For nonnegative integers $a,b,c$,
    \begin{align}
        \qbinom{a + b}{c}{q}
        &= \sum_k q^{k(k+b-c)} \qbinom{a}{k}{q} \qbinom{b}{c-k}{q}  \label{eq:qVand1}\\
        &= \sum_j q^{(c-j)(b-j)} \qbinom{b}{j}{q} \qbinom{a}{c-j}{q}. \label{eq:qVand2}
    \end{align}
\end{lemma}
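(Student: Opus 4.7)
My plan is to deduce both identities simultaneously from the (finite) $q$-binomial theorem
\[ \prod_{i=0}^{n-1}(1+q^i x) \;=\; \sum_{k=0}^{n} q^{\binom{k}{2}}\qbinom{n}{k}{q} x^k, \]
by factoring the left-hand side with $n=a+b$ in two different ways.

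For \cref{eq:qVand1}, I would write
\[ \prod_{i=0}^{a+b-1}(1+q^i x) \;=\; \Bigl(\prod_{i=0}^{b-1}(1+q^i x)\Bigr)\cdot \Bigl(\prod_{i=0}^{a-1}(1+q^{b+i} x)\Bigr), \]
apply the $q$-binomial theorem to each of the three products, and extract the coefficient of $x^c$ on both sides. This yields
\[ q^{\binom{c}{2}}\qbinom{a+b}{c}{q} \;=\; \sum_{k} q^{\binom{c-k}{2}+\binom{k}{2}+bk} \qbinom{b}{c-k}{q}\qbinom{a}{k}{q}. \]
The only real computation is then to simplify the exponent: using the elementary identity
\[ \binom{c-k}{2}+\binom{k}{2}-\binom{c}{2} \;=\; k^{2}-ck, \]
one finds the exponent equals $k(k+b-c)$, which after dividing by $q^{\binom{c}{2}}$ gives exactly \cref{eq:qVand1}.

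Finally, I would note that \cref{eq:qVand2} is not an independent statement: the substitution $j=c-k$ sends
\[ q^{k(k+b-c)}\qbinom{a}{k}{q}\qbinom{b}{c-k}{q} \;\longmapsto\; q^{(c-j)(b-j)}\qbinom{a}{c-j}{q}\qbinom{b}{j}{q}, \]
so \cref{eq:qVand2} follows from \cref{eq:qVand1} by a simple reindexing of the sum.

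The only step that is more than bookkeeping is the exponent simplification above, and even that is straightforward; there is no real obstacle, which reflects the classical nature of the $q$-Chu--Vandermonde identity.
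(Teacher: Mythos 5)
Your proposal is correct. For the reduction of \cref{eq:qVand2} to \cref{eq:qVand1} you do exactly what the paper does (the substitution $k = c - j$, under which $k(k+b-c)$ becomes $(c-j)(b-j)$). For \cref{eq:qVand1} itself the paper simply cites \cite[Solution to exercise 1.100]{EC1}, whereas you supply a self-contained derivation from the finite $q$-binomial theorem $\prod_{i=0}^{n-1}(1+q^i x)=\sum_k q^{\binom{k}{2}}\smallqbinom{n}{k}{q}x^k$ by splitting the product over $i=0,\ldots,a+b-1$ at $i=b$ and comparing coefficients of $x^c$. I checked the details: the shifted factor contributes $q^{\binom{k}{2}+bk}\smallqbinom{a}{k}{q}$, and the exponent identity $\binom{c-k}{2}+\binom{k}{2}-\binom{c}{2}=k^2-ck$ is correct, giving the stated exponent $k(k+b-c)$. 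Your version buys a proof rather than a reference, at the cost of a few lines; either is acceptable for a classical identity like this one.
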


\begin{proof}
The first formula is \cite[Solution to exercise 1.100]{EC1}.
The second is obtained by the substitution $k = c - j$.
\end{proof}
We now give a generating function for $b_q(n;j)$.
\begin{theorem}
    \label{thm:cat conj}
   For any fixed $n \in \Z_{\geq 0}$,
    the ordinary generating function of the numbers $\{b_q(n;j)\}_j$ is
    \begin{equation}
    \label{eq:thm-cat-conj}
       \sum_{j \geq 0} b_q(n;j) t^j =
       \dfrac{q^{-\binom{n}{2}} C_{n-1}(q) t^{n-1}}{\prod_{j=-(n-1)}^{n-1} (1 - \qint{j}t)},
    \end{equation}
    where $C_n(q) = \dfrac{1}{\qint{n+1}} \qbinom{2n}{n}{q}$ is a $q$-analogue of the Catalan numbers.
\end{theorem}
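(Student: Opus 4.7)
The plan is to mirror the strategy of \cref{thm:tree conj}: apply \cref{prop:isoprojtechnique} to rewrite $\sum_j b_q(n;j)\,t^j$ as a sum over $k$ of rational functions in $t$, then clear denominators and verify the resulting polynomial identity at sufficiently many values of $t$. First, substituting the generating series $\sum_{j \geq 0} h_j(x_1, \ldots, x_n)\,t^j = \prod_{i=1}^n (1 - x_i t)^{-1}$, evaluated at $\qcont((n-k, 1^k)) = \{\!\!\{ \qint{j} : -k \leq j \leq n-k-1 \}\!\!\}$, into \cref{prop:isoprojtechnique} yields
\begin{equation*}
\sum_{j \geq 0} b_q(n;j)\,t^j \;=\; \frac{1}{\qfact{n}} \sum_{k=0}^{n-1} (-1)^k q^{\binom{k}{2}} \qbinom{n-1}{k}{q} \prod_{j=-k}^{n-k-1} \frac{1}{1-\qint{j}t}.
\end{equation*}
Multiplying both sides of \cref{eq:thm-cat-conj} by the common denominator $\prod_{j=-(n-1)}^{n-1}(1-\qint{j}t)$ (the $j=0$ factor is trivial since $\qint{0}=0$) reduces the theorem to a polynomial identity in $t$ of degree at most $n-1$. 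I would verify this identity at the $n$ distinct values $t = 1/\qint{\ell}$ for $\ell = 1, \ldots, n-1$, together with $t = 0$.

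For $t = 1/\qint{\ell}$, only the summands with $0 \leq k \leq n-\ell-1$ survive. Applying the elementary identity $1 - \qint{j}/\qint{\ell} = q^j \qint{\ell-j}/\qint{\ell}$ (together with $\qint{-m} = -q^{-m}\qint{m}$ to handle negative arguments arising when $j > \ell$) rewrites each surviving factor as a ratio of $q$-integers. The sign $(-1)^k$ produced by the negative arguments cancels the original $(-1)^k$, and the combined $q$-exponent collapses via $\binom{k}{2} + \binom{k+1}{2} = k^2$ into the clean form $k(k+\ell) - \binom{n}{2}$. The $k$-dependent content thus reduces to
\begin{equation*}
q^{k(k+\ell)-\binom{n}{2}}\,\qbinom{n-1}{k}{q}\qbinom{n-1}{n-1-\ell-k}{q},
\end{equation*}
and the sum over $k$ is evaluated by the $q$-Chu--Vandermonde identity (\cref{lemma:qvandermonde}, with $a=b=n-1$ and $c = n-1-\ell$) to produce $\qbinom{2n-2}{n-1-\ell}{q}$. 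The remaining $q$-factorial prefactors telescope into $C_{n-1}(q)/\qint{n}$, yielding precisely $q^{-\binom{n}{2}}\,C_{n-1}(q)\,\qint{\ell}^{-(n-1)}$, which is the value of the right-hand-side of \cref{eq:thm-cat-conj} at $t = 1/\qint{\ell}$. At $t = 0$, both sides vanish for $n \geq 2$: the right-hand-side via the factor $t^{n-1}$, and the left-hand-side via the $q$-binomial identity $\sum_{k=0}^{n-1}(-1)^k q^{\binom{k}{2}}\qbinom{n-1}{k}{q} = 0$; the case $n = 1$ is immediate.

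The main obstacle will be the careful bookkeeping of $q$-exponents and signs arising from the mixture of positive and negative arguments of $\qint{m}$ in the expansion of $P_k(1/\qint{\ell})$; once these are correctly tracked, every subsequent simplification follows from standard $q$-identities, with $q$-Chu--Vandermonde doing the essential work. The formula $b_q(n;n-1) = q^{-\binom{n}{2}}C_{n-1}(q)$ from \cref{eq: q-Catalan} will then follow by reading off the coefficient of $t^{n-1}$ in the closed form.
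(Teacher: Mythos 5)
Your proposal is correct and follows essentially the same route as the paper's proof: apply \cref{prop:isoprojtechnique} to the generating series of the $h_j$, clear denominators to get a polynomial identity of degree $n-1$ in $t$, evaluate at $n$ points where all but an initial range of summands vanish, and finish with $q$-Chu--Vandermonde. The only (harmless) deviations are cosmetic: you evaluate at $t=0$ together with $t=1/\qint{\ell}$ for $1\le \ell\le n-1$ (handling $t=0$ via $\sum_k(-1)^kq^{\binom{k}{2}}\smallqbinom{n-1}{k}{q}=0$), whereas the paper parametrizes the points as $t=1/\qint{n-1-\ell}$, and correspondingly you instantiate \cref{eq:qVand1} with $a=b=n-1$, $c=n-1-\ell$ rather than $a=\ell$, $b=2n-2-\ell$, $c=n-1$.
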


\begin{proof}
As in the proof of \cref{thm:tree conj}, we apply \cref{prop:isoprojtechnique} to the power series
\[
    \sum_{j \geq 0} h_j(x_1,\ldots,x_n) t^j = \prod_{j = 1}^n \frac{1}{1-x_i t}
\]
to obtain
\begin{align}
    \sum_{j \geq 0} b_q(n;j) t^j
    &=
    \frac{1}{\qfact{n}} \sum_{k=0}^{n-1} (-1)^kq^{\binom{k}{2}} \qbinom{n-1}{k}{q} \cdot \prod_{j = -k}^{n-1-k} \frac{1}{1-\qint{j}t}
    \\
    &=
    \sum_{k=0}^{n-1} \frac{(-1)^kq^{\binom{k}{2}}}{\qint{n}\qfact{k}\qfact{n-k-1}} \cdot \prod_{j = -k}^{n-1-k} \frac{1}{1-\qint{j}t}
    \label{eq:gen f for eval of hs}.
\end{align}
We must prove that the right-hand-sides of \cref{eq:thm-cat-conj,eq:gen f for eval of hs} are equal.
Clearing the denominator in the right-hand-side of \cref{eq:thm-cat-conj},
this is equivalent to proving the following equality between two polynomials of degree $n - 1$ in $t$:
\begin{equation} \label{eq:gen f for eval of hs: identity to prove}
    \sum_{k=0}^{n-1} \frac{(-1)^k q^{\binom{k}{2}}}{\qint{n}\qfact{k}\qfact{n-k-1}}
    \prod_{ \substack{ j \in [-(n-1),n-1] \\ j \notin [-k,n-1-k]}} (1 - \qint{j}t)
    = q^{-\binom{n}{2}} C_{n-1}(q) t^{n-1}.
\end{equation}
We do so by proving this equality when $t = \frac{1}{\qint{n-1-\ell}}$ for $\ell \in [0,n-1]$.
Observe that
\begin{align*}
    \prod_{j=k+1}^{n-1} \left( 1 - \frac{\qint{-j}}{\qint{n-1-\ell}} \right)
    &= \frac{1}{\qint{n-1-\ell}^{n-k-1}} \prod_{j=k+1}^{n-1} q^{-j}\qint{n-1-\ell+j}\\
    &= \frac{q^{-\binom{n}{2}+\binom{k+1}{2}}\qfact{2n-2-\ell}}{\qint{n-1-\ell}^{n-k-1} \qfact{n-1-\ell+k}}, \\
    \prod_{j=n-k}^{n-1} \left( 1 - \frac{\qint{j}}{\qint{n-1-\ell}} \right)
    &= \frac{1}{\qint{n-1-\ell}^{k}} \prod_{j=n-k}^{n-1} - q^{n-1-\ell}\qint{\ell+1-n+j} \\
    &= \frac{(-1)^k q^{k(n-1-\ell)}\qfact{\ell}}{\qint{n-1-\ell}^{k} \qfact{\ell-k}}.
\end{align*}
Thus, plugging $t = \frac{1}{\qint{n-1-\ell}}$ in the left-hand-side of \cref{eq:gen f for eval of hs: identity to prove} yields
\begin{align}
    \dfrac{q^{-\binom{n}{2}}}{\qint{n}\qint{n-1-\ell}^{n-1}} \sum_{k=0}^\ell q^{k(n-1-\ell+k) } \qbinom{2n-2-\ell}{n-1-k}{q} \qbinom{\ell}{k}{q}
    &= \dfrac{q^{-\binom{n}{2}}}{\qint{n}\qint{n-1-\ell}^{n-1}} \qbinom{2n-2}{n-1}{q} \\
    &= \dfrac{q^{-\binom{n}{2}}C_{n-1}(q)}{\qint{n-1-\ell}^{n-1}}, \label{eq:apply q chu vandermonde}
\end{align}
which is exactly the right-hand-side of \cref{eq:gen f for eval of hs: identity to prove} evaluated at $t = \frac{1}{\qint{n-1-\ell}}$.
The equality in \cref{eq:apply q chu vandermonde} follows by applying the $q$-Chu--Vandermonde identity \cref{eq:qVand1} with $a = \ell$, $b = 2n-2-\ell$, and $c = n-1$.
\end{proof}

Extracting the coefficient of the lowest-degree nonzero term in \cref{thm:cat conj}, we deduce the following result (see \cref{ex:def bq}).

\begin{corollary}
    For all $n \in \Z_{\geq 0}$,
    \begin{equation}
        \label{eq: identity h's}
        b_q(n;n-1) =
        q^{\binom{n - 2}{2} - 1} C_{n-1}(q^{-1}) = q^{-\binom{n}{2}} C_{n-1}(q).
    \end{equation}
\end{corollary}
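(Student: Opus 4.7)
The plan is to deduce this corollary as an immediate consequence of \cref{thm:cat conj} by extracting the coefficient of the lowest-degree nonzero term in the generating function, and then reconcile the two equivalent closed forms via standard $q$-reciprocity.

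First I would observe that in the denominator $\prod_{j=-(n-1)}^{n-1}(1-\qint{j}t)$, the factor with $j=0$ is simply $1$ since $\qint{0} = 0$, and every other factor has constant term $1$. Therefore the reciprocal of the denominator is a formal power series of the form $1 + O(t)$. Multiplying by the numerator $q^{-\binom{n}{2}} C_{n-1}(q) t^{n-1}$, the coefficient of $t^{n-1}$ in $\sum_{j \geq 0} b_q(n;j) t^j$ is exactly $q^{-\binom{n}{2}} C_{n-1}(q)$. Since the generating function has no nonzero terms of degree smaller than $n-1$, this is precisely $b_q(n;n-1)$, establishing the second equality.

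To obtain the first equality, I would apply the standard $q$-reciprocity identities $\qint[q^{-1}]{m} = q^{-(m-1)} \qint{m}$ and the analogous identity for $q$-factorials. Together these yield $\qbinom{2n-2}{n-1}{q^{-1}} = q^{-(n-1)^2} \qbinom{2n-2}{n-1}{q}$, which combines with the reciprocity of $\qint{n}$ to give $C_{n-1}(q^{-1}) = q^{(n-1)-(n-1)^2} C_{n-1}(q)$. A direct computation then verifies the exponent identity $\binom{n-2}{2} - 1 + (n-1) - (n-1)^2 = -\binom{n}{2}$, confirming the equivalence of the two forms.

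The proof should be immediate from the theorem, so no significant obstacle is anticipated. The only minor bookkeeping is the verification of exponents in the $q$-reciprocity for the $q$-Catalan number; conceptually, the vanishing of $\qint{0}$ makes it transparent that the generating function in \cref{thm:cat conj} has a zero of order exactly $n-1$ at $t = 0$.
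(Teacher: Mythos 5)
Your proposal is correct and matches the paper's approach: the paper likewise obtains this corollary by extracting the coefficient of the lowest-degree term $t^{n-1}$ from the generating function of \cref{thm:cat conj}, which works exactly as you describe since the denominator is $1+O(t)$. Your explicit verification of the $q$-reciprocity exponents for the first equality (which the paper leaves implicit) is also correct.
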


More generally, we obtain the following.
\begin{corollary}
For all $n$ and $m$ in $\Z_{\geq 0}$,
\[
    b_q(n;n+m-1) =
    q^{-\binom{n}{2}} C_{n-1}(q) h_m\big(\qint{-(n-1)},\qint{-(n-2)},\ldots,\qint{n-1}\big).
\]
\end{corollary}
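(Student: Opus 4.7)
The plan is to mirror the strategy used in the preceding corollary for $a_q(n;n+m-1)$, essentially reading off coefficients on both sides of the generating function identity in \cref{thm:cat conj}. The denominator in that theorem is a product of $2n-1$ factors of the form $(1-\qint{j}t)$ indexed by $j \in \{-(n-1), -(n-2), \ldots, n-1\}$, which is exactly the shape of the generating function \cref{eq:hgenfunction} for the complete homogeneous symmetric functions evaluated at the finite alphabet $\bigl(\qint{-(n-1)}, \qint{-(n-2)}, \ldots, \qint{n-1}\bigr)$.

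Concretely, I would first rewrite
\begin{equation*}
    \prod_{j=-(n-1)}^{n-1} \frac{1}{1-\qint{j}t}
    = \sum_{k \geq 0} h_k\bigl(\qint{-(n-1)}, \qint{-(n-2)}, \ldots, \qint{n-1}\bigr) \, t^k,
\end{equation*}
and substitute this into the right-hand side of \cref{eq:thm-cat-conj} to obtain
\begin{equation*}
    \sum_{j \geq 0} b_q(n;j)\, t^j
    = q^{-\binom{n}{2}} C_{n-1}(q) \sum_{k \geq 0} h_k\bigl(\qint{-(n-1)}, \ldots, \qint{n-1}\bigr) \, t^{k+n-1}.
\end{equation*}
Extracting the coefficient of $t^{n+m-1}$ on both sides (so $k = m$) yields the claimed formula immediately.

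There is no real obstacle here: the entire content was already packaged inside \cref{thm:cat conj}, and this corollary is simply the repackaging of its generating function into the $h_m$-basis via the standard identity \cref{eq:hgenfunction}. The only small point to verify is the bookkeeping on the indexing of the denominator (that the $2n-1$ linear factors really correspond to the alphabet $\qint{j}$ for $-(n-1) \leq j \leq n-1$), which is transparent from the statement of \cref{thm:cat conj}. This parallels exactly the proof of the analogous corollary for $a_q(n;n+m-1)$ given just above.
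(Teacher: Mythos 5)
Your proposal is correct and is exactly the argument the paper intends (the paper omits the proof, but it is the verbatim analogue of the preceding corollary for $a_q(n;n+m-1)$): expand the denominator of \cref{eq:thm-cat-conj} via \cref{eq:hgenfunction} on the alphabet $\qint{-(n-1)},\ldots,\qint{n-1}$ and extract the coefficient of $t^{n+m-1}$. Nothing further is needed.
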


\subsection{Proof of \cref{thm:intro_q_summary_thm}(3): generating function for $c_q(n; p_1, \ldots, p_m)$}\label{ssec:q-c-proof}
We now prove \cref{thm:intro_q_summary_thm}(3), giving a $q$-analogue of the generating function for $c(n;p_1, \ldots, p_m)$.
Recall that
\[ c(n;p_1, \ldots,p_m) = [\sfc] \, \, e_{(p_1, \ldots, p_m)}(\Xi_n) \]
counts the number of ways to factor $\sfc$ into a product of $m$ permutations $\pi_1 \cdots \pi_m$, where $\pi_i$ has $n-p_i$ cycles. We are interested in the $q$-analogue
\begin{align}
    c_q(n; p_1,\ldots,p_m)
    :=& \, \, [T_{\sfc}] \,\, {\color{RoyalBlue} e_{(p_1,\ldots,p_m)}(\Xi_n(q))} \\
    =& \, \, [T_{\sfc}] \,\,
    {\color{RoyalBlue}
        \hskip5pt
        \smashoperator{
            \sum_{
                \substack{\mu^{(i)} \vdash n \\ \ell(\mu^{(i)})=n-p_i}
            }
        }
        \hskip3pt
        q^{-(p_1 + \cdots + p_m)} \, {\Gamma}_{\mu^{(1)}} \cdots {\Gamma}_{\mu^{(m)}}
    }.\label{eq: c in hecke}
\end{align}

Let ${\bf t} = (t_1,\ldots,t_m)$ and define
\begin{equation} \label{eq:F in terms of qE}
    F_{n}(q;{\bf t}) := [T_{\sfc}] \, \tilde{E}_n(q;t_1) \, \tilde{E}_n(q;t_2) \, \cdots \, \tilde{E}_n(q;t_m),
\end{equation}
where by \cref{def:etilde} and \cref{thm:ryba JM} we have
\[
    \tilde{E}_n(q;t)
    = \big(t + J_1(q)\big) \big(t + J_2(q)\big) \cdots \big(t + J_n(q)\big)
    = \sum_{i = 0}^{n} e_i\big(\Xi_n(q)\big) \, t^{n-i}.
\]
As in \cref{eq:blueprint-F-expanded},
the coefficient of $t_1^{n-p_1} \cdots t_m^{n-p_m}$
in $F_n(q, {\bf t})$ is
\begin{equation}
    [t_1^{n-p_1} \cdots t_m^{n-p_m}] \, F_n(q;{\bf t})
    = [T_{\sfc}] \, e_{(p_1, \ldots, p_m)}\big(\Xi_n(q)\big)
    = c_q(n; p_1, \ldots, p_m).
\end{equation}

Recall that our $q$-analogue of the binomial basis is
\begin{equation}\label{eq:fallfactorialdef}
\binom{t}{k}_q := \frac{1}{\qfact{k}} \prod_{i=0}^{k-1} (t-q^{-i}\qint{i}) = \frac{1}{\qfact{k}} \prod_{i=0}^{k-1} (t + \qint{-i}),
\end{equation}
and our $q$-analogue of $\M^{n}_{(r_1,\ldots,r_m)}$ is
\begin{equation}
\label{eq:def-Mqs}
\M^{n}_{(r_1,\ldots,r_m)}(q):= \# \bigg \{ (V_1,\ldots,V_m): V_i \subset \mathbb{F}_q^{n} \textrm{ with } \dim(V_i) = r_i \textrm{ and } V_1 + \cdots + V_m = \mathbb{F}_q^{n} \bigg\}.
\end{equation}

\begin{theorem} \label{thm:HeckeJacksonMs}
    For all $n$ and $m$ in $\Z_{\geq 0}$,
    the multivariate generating function $F_n(q; {\bf t})$ satisfies
    \begin{equation}
        F_n(q;{\bf t}) =
        \qfact{n}^{m-1} \ \ \smashoperator{\sum_{0 \leq r_1,\ldots,r_m \leq n-1}} \ \ q^{-\sum_i (n-r_i)r_i} \ \M^{n-1}_{(r_1,\ldots,r_m)}(q) \ \binom{t_1}{n-r_1}_q \ \binom{t_2}{n-r_2}_q \  \cdots \binom{t_m}{n-r_m}_q.
    \end{equation}

    In particular, when $\lambda$ is a composition of $n-1$,
    \begin{equation} \label{long-cycle-in-e-lambda-hecke-q}
        c_q(n; \lambda)
        = [T_{\sfc}]\,\, e_{\lambda}(\Xi_n(q))
        = q^{\sum_i \binom{\lambda_i}{2}-\binom{n}{2}} \frac{1}{\qint{n}} \prod_{i} \qbinom{n}{\lambda_i}{q}.
    \end{equation}
\end{theorem}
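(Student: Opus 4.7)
The plan is to follow the blueprint of \cref{sec:prototypical-example-isotypic-projectors-technique} (the proof of \cref{thm:introsymmetricsummary}(3)), now in the Hecke-algebra setting. Expanding $\tilde{E}_n(q; t) = \sum_a e_a(\Xi_n(q))\,t^{n-a}$ turns $F_n(q; \mathbf{t})$ into the generating function for $c_q(n; p_1, \ldots, p_m)$, and applying \cref{prop:isoprojtechnique} to each coefficient $[T_{\sfc}]\, e_{(p_1, \ldots, p_m)}(\Xi_n(q))$, then swapping the order of summation, yields
\begin{equation*}
F_n(q; \mathbf{t}) = \frac{1}{\qfact{n}}\sum_{k=0}^{n-1}(-1)^k q^{\binom{k}{2}}\qbinom{n-1}{k}{q}\prod_{i=1}^{m}\tilde{E}_{n,k}(q; t_i),
\end{equation*}
where $\tilde{E}_{n,k}(q; t) := \prod_{c \in \qcont((n-k,1^k))}(t+c) = \prod_{j=-k}^{n-k-1}(t+\qint{j})$ is the natural $q$-analogue of the rising factorial $(t-k)^{(n)}$ appearing in \cref{eq:tildeE-symm}.

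The main technical step is to establish the following $q$-analogue of the Chu--Vandermonde expansion \cref{eq:chu-vandermonde}:
\begin{equation*}
\tilde{E}_{n,k}(q; t) = \qfact{n}\sum_{r=0}^{n-1-k} q^{-r(n-r)}\qbinom{n-1-k}{r}{q}\binom{t}{n-r}_q.
\end{equation*}
I would prove this by induction on $n$, using the factorization $\tilde{E}_{n,k}(q;t) = (t+\qint{n-k-1})\tilde{E}_{n-1,k}(q;t)$ together with the multiplication rule
\begin{equation*}
(t+\qint{j})\binom{t}{m}_q = \qint{m+1}\binom{t}{m+1}_q + q^{-m}\qint{j+m}\binom{t}{m}_q,
\end{equation*}
which itself follows from the identity $\qint{j} - \qint{-m} = q^{-m}\qint{j+m}$ and the defining factorization of $\binom{t}{m+1}_q$. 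After reindexing the two resulting sums, the inductive step reduces to verifying the $q$-integer identity
\begin{equation*}
q^r\qint{n-r}\qint{n-1-k-r} + \qint{r}\qint{2n-k-r-1} = \qint{n}\qint{n-1-k},
\end{equation*}
which is immediate from $\qint{a}\qint{b} = (q^{a+b} - q^a - q^b + 1)/(q-1)^2$.

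Substituting this expansion back into $F_n(q; \mathbf{t})$ and interchanging the sums over $k$ and $\mathbf{r}$ produces
\begin{equation*}
F_n(q; \mathbf{t}) = \qfact{n}^{m-1}\sum_{\mathbf{r}} q^{-\sum_i r_i(n-r_i)}\Bigl(\prod_i\binom{t_i}{n-r_i}_q\Bigr)\sum_k (-1)^k q^{\binom{k}{2}}\qbinom{n-1}{k}{q}\prod_i\qbinom{n-1-k}{r_i}{q}.
\end{equation*}
To finish, I would identify the inner alternating sum as $\M^{n-1}_{\mathbf{r}}(q)$ via M\"obius inversion on the subspace lattice of $\F_q^{n-1}$: partitioning the valid tuples $(V_1, \ldots, V_m)$ by the sum $W = V_1 + \cdots + V_m$ and using the M\"obius function $\mu(W, \F_q^{n-1}) = (-1)^{n-1-\dim W}q^{\binom{n-1-\dim W}{2}}$, together with the count $\qbinom{\dim W}{r_i}{q}$ of $r_i$-dimensional subspaces of each $W$, recovers exactly this alternating sum.

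For the specialization at $\lambda \vdash n-1$, extracting $[t_1^{n-\lambda_1}\cdots t_m^{n-\lambda_m}]$ forces $r_i = \lambda_i$ (since $\binom{t_i}{n-r_i}_q$ has degree $n-r_i$ in $t_i$, with leading coefficient $1/\qfact{n-\lambda_i}$). Because $\sum_i\lambda_i = \dim \F_q^{n-1}$, the defining condition of $\M^{n-1}_\lambda(q)$ becomes a direct-sum-decomposition condition, and a standard count gives $\M^{n-1}_\lambda(q) = q^{\sum_{i<j}\lambda_i\lambda_j}\qbinom{n-1}{\lambda_1, \ldots, \lambda_m}{q}$. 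The elementary identities $\sum_{i<j}\lambda_i\lambda_j = \binom{n-1}{2}-\sum_i\binom{\lambda_i}{2}$ and $\sum_i\lambda_i(n-\lambda_i) = n(n-1)-\sum_i\lambda_i^2$ then collapse the $q$-exponent to $\sum_i\binom{\lambda_i}{2}-\binom{n}{2}$, yielding the product formula. I expect the $q$-Chu--Vandermonde identity to be the main obstacle, since it requires first pinpointing the correct $q$-power $q^{-r(n-r)}$ and then executing the inductive algebra above; everything else is bookkeeping and a direct application of M\"obius inversion.
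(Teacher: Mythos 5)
Your proposal is correct, and its overall architecture coincides with the paper's: expand $F_n(q;\mathbf{t})$ via \cref{prop:isoprojtechnique} into a sum over hooks of products $\prod_i \tilde{E}_{n,k}(q;t_i)$, change basis to the $\binom{t}{j}_q$, identify the resulting alternating sum as $\M^{n-1}_{\mathbf{r}}(q)$ by M\"obius inversion on the subspace lattice (your version, inverting over containment of $V_1+\cdots+V_m$ directly, is equivalent to the paper's, which passes through orthogonal complements), and finish the composition case with the direct-sum count $\M^{n-1}_{\lambda}(q)=q^{\binom{n-1}{2}-\sum_i\binom{\lambda_i}{2}}\smallqbinom{n-1}{\lambda_1,\ldots,\lambda_m}{q}$, which is the paper's \cref{lem:qMs-qmultinomial}. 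The one genuinely different ingredient is your proof of the change-of-basis formula (\cref{prop:etilde}): you induct on $n$ via the factorization $\tilde{E}_{n,k}=(t+\qint{n-k-1})\tilde{E}_{n-1,k}$ and the three-term rule $(t+\qint{j})\binom{t}{m}_q=\qint{m+1}\binom{t}{m+1}_q+q^{-m}\qint{j+m}\binom{t}{m}_q$, reducing to the identity $q^r\qint{n-r}\qint{n-1-k-r}+\qint{r}\qint{2n-k-r-1}=\qint{n}\qint{n-1-k}$, which I have checked and which does hold (and the boundary cases $r=0$, $r=n-1-k$, and the base case $k=n-1$ all come out right). The paper instead verifies the degree-$n$ polynomial identity at the $n+1$ points $t=-\qint{-m}$ for $m>n$ and invokes the $q$-Chu--Vandermonde identity. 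Your induction buys a self-contained, $q$-Chu--Vandermonde-free argument at the cost of some reindexing bookkeeping; the interpolation proof is shorter once the right evaluation points are guessed. Both are valid, so there is no gap.
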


Before proving \cref{thm:HeckeJacksonMs}, we highlight
the $m=2$ case, which is particularly nice.

\begin{corollary} \label{cor:Jackson qMs 2 factors}
    For all $n \in \Z_{\geq 0}$,
    \begin{equation} \label{eq:JacksonMs 2 factors}
F_n(q;t_1,t_2) = \qfact{n} \sum_{0 \leq r_1, \ r_2 \leq n} q^{{a}} \qbinom{n-1}{r_1-1,r_2-1,n+1-r_1-r_2}{q} \binom{t_1}{r_1}_q \ \binom{t_2}{r_2}_q,
\end{equation}
where
\[ a=(r_1-1)(r_2-1)-(n-r_1)r_1-(n-r_2)r_2.\]
In particular, for $\lambda = (k,n-1-k)$ we have that
\begin{equation} \label{eq:qNarayana}
[T_{\sfc}]\,\, e_{k,n-1-k}(\Xi_n(q)) = q^{1-(k+1)(n-k)} N_{n,k+1}(q),
\end{equation}
where $N_{n,k+1}(q)$ is the $q$-Narayana number
\[N_{n,k+1}(q):=\dfrac{1}{\qint{n}}\qbinom{n}{k+1}{q}\qbinom{n}{k}{q}.\]
\end{corollary}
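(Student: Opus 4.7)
The plan is to derive both parts of the corollary directly from \cref{thm:HeckeJacksonMs} by evaluating the two-subspace count $\M^{n-1}_{(n-r_1,\,n-r_2)}(q)$ in closed form. Specializing \cref{thm:HeckeJacksonMs} to $m = 2$ and then performing the change of variables $r_i \mapsto n - r_i$ (which rewrites $\binom{t_i}{n-r_i}_q$ as $\binom{t_i}{r_i}_q$ and shifts the summation range to $1 \le r_1, r_2 \le n$) reduces \cref{eq:JacksonMs 2 factors} to the purely combinatorial identity
\[
\M^{n-1}_{(n-r_1,\,n-r_2)}(q) \;=\; q^{(r_1-1)(r_2-1)} \qbinom{n-1}{r_1-1,\,r_2-1,\,n+1-r_1-r_2}{q}.
\]
Extending the summation range to $0 \le r_1, r_2 \le n$ in \cref{eq:JacksonMs 2 factors} is harmless since the multinomial coefficient vanishes whenever $r_1 = 0$, $r_2 = 0$, or $r_1 + r_2 > n + 1$.

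The main step is therefore this subspace count. Assume $r_1, r_2 \ge 1$ and $r_1 + r_2 \le n + 1$; otherwise both sides are zero. Writing $d = n + 1 - r_1 - r_2 \ge 0$, a pair $(V_1, V_2)$ of subspaces of $\F_q^{n-1}$ with $\dim V_i = n - r_i$ satisfies $V_1 + V_2 = \F_q^{n-1}$ if and only if $\dim(V_1 \cap V_2) = d$, by the standard dimension formula. I would stratify by $W := V_1 \cap V_2$: there are $\qbinom{n-1}{d}{q}$ choices for $W$, and after passing to the quotient $U := \F_q^{n-1}/W$, which has dimension $r_1 + r_2 - 2$, the images $V_1/W$ and $V_2/W$ form complementary subspaces of $U$ of dimensions $r_2 - 1$ and $r_1 - 1$ respectively. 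The number of such ordered complementary pairs is $\qbinom{r_1+r_2-2}{r_2-1}{q} \cdot q^{(r_1-1)(r_2-1)}$, using the standard fact that a fixed subspace of codimension $c$ in an $n$-dimensional $\F_q$-space admits exactly $q^{c(n-c)}$ complements. Multiplying these counts and regrouping via the $q$-multinomial product identity $\qbinom{n-1}{d}{q} \cdot \qbinom{r_1+r_2-2}{r_2-1}{q} = \qbinom{n-1}{r_1-1,\,r_2-1,\,d}{q}$ gives exactly the claimed closed form.

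The Narayana identity \cref{eq:qNarayana} is then immediate: applying \cref{long-cycle-in-e-lambda-hecke-q} (already established within \cref{thm:HeckeJacksonMs}) to $\lambda = (k, n-1-k)$ yields $q^{\binom{k}{2} + \binom{n-1-k}{2} - \binom{n}{2}} \tfrac{1}{\qint{n}} \qbinom{n}{k}{q} \qbinom{n}{n-1-k}{q}$, and this equals $q^{1-(k+1)(n-k)} N_{n,k+1}(q)$ once one uses the symmetry $\qbinom{n}{n-1-k}{q} = \qbinom{n}{k+1}{q}$ together with the elementary identity $\binom{k}{2} + \binom{n-1-k}{2} - \binom{n}{2} = 1 - (k+1)(n-k)$. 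The only non-routine step in the whole argument is the subspace count, and within it the main pitfall is keeping the dimensions $r_1 - 1$ and $r_2 - 1$ straight after passing to the quotient, since they appear swapped relative to the original codimensions of $V_1$ and $V_2$; everything else is bookkeeping, namely the change of variables, the vanishing of boundary terms, and the elementary simplification of the Narayana exponent.
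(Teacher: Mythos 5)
Your proof is correct and follows essentially the same route as the paper: specialize \cref{thm:HeckeJacksonMs} to $m=2$, reindex $r_i \mapsto n-r_i$, substitute the closed form $\M^{n-1}_{(n-r_1,n-r_2)}(q) = q^{(r_1-1)(r_2-1)}\smallqbinom{n-1}{r_1-1,\,r_2-1,\,n+1-r_1-r_2}{q}$, and obtain \cref{eq:qNarayana} from \cref{long-cycle-in-e-lambda-hecke-q} together with the exponent identity $\binom{k}{2}+\binom{n-1-k}{2}-\binom{n}{2}=1-(k+1)(n-k)$. The only difference is that the paper simply cites \cref{cor:qMs m=2} for the subspace count, whereas you re-derive it inline by fixing $W=V_1\cap V_2$ and counting complementary pairs in the quotient; your count is correct and is a close variant of the paper's own proof of that proposition.
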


\begin{proof}
Applying \cref{thm:HeckeJacksonMs} for $m=2$ and reindexing the right-hand size by $r_i$ to $n-r_i$, we obtain
\begin{align*}
    F_n(q;t_1,t_2) &= \qfact{n} \sum_{0 \leq r_1, \ r_2 \leq n} q^{(n-r_1)r_1+(n-r_2)r_2} \ \M^{n-1}_{(n-r_1,n-r_2)}(q) \cdot \binom{t_1}{r_1}_q \binom{t_2}{r_2}_q.
\end{align*}
The claimed generating polynomial in \cref{eq:JacksonMs 2 factors}  then follows by \cref{cor:qMs m=2}. When $\lambda=(n-1-k,k)$, we apply the formula in \cref{long-cycle-in-e-lambda-hecke-q} and obtain \cref{eq:qNarayana}, as desired.
\end{proof}

The proof of \cref{thm:introsymmetricsummary}(3) outlined in
\S\ref{sec:prototypical-example-isotypic-projectors-technique} serves as a blueprint for our proof of \cref{thm:HeckeJacksonMs}. That proof relied on the Chu--Vandermonde identity~\cref{eq:chu-vandermonde}, which
describes the change of basis of the polynomial ring $\Z[t]$ between the basis
of rising factorials
\[ (t - k)^{(n)} = (t-k)(t-k+1) \cdots (t) (t+1) \cdots (t+n-k-1) = \prod_{j= -k}^{n-k-1} \big(t + j\big), \] and the binomial basis
\[ \binom{t}{k} = \frac{1}{k!} \prod_{i=0}^{k-1} (t-i).  \]

Here we develop a $q$-analogue of this change of basis (\cref{prop:etilde}) that we will use in the
proof of \cref{thm:HeckeJacksonMs}. As discussed above, our $q$-analogue of the binomial basis is given in \cref{eq:fallfactorialdef}. Our analogue of the rising factorial basis $(t-k)^{(n)}$ will be

\begin{equation}\label{eq:etildedef}
    \tilde{E}_{n,k}(q;t)
    := \prod_{c \in \qcont((n-k,1^k))} \big(t + c\big)
    = \prod_{j= -k}^{n-k-1} \big(t + \qint{j}\big),
\end{equation}
for $n > 0$ and $0 \leq k < n$. Note that $\tilde{E}_{n,k}(1;t) = (t-k)^{(n)}$.

We now give an explicit expression for the $\tilde{E}_{n,k}(q;t)$ in
terms of the $\binom{t}{j}_q$.

\begin{proposition}\label{prop:etilde}
    For all $n > 0$ and $0 \leq k < n$,
    \begin{equation}
    \label{eq-prop:etilde}
        \tilde{E}_{n,k}(q;t)
        = \qfact{n} \sum_{j = k+1}^n q^{-(n-j)j} \qbinom{n-1-k}{n-j}{q} \binom{t}{j}_q.
    \end{equation}
\end{proposition}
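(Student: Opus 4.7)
The plan is to verify the identity by treating both sides as polynomials in $t$ of degree $n$ and comparing $n+1$ pieces of data: the leading coefficient plus $n$ evaluation points. The leading coefficient check is immediate. The LHS $\tilde{E}_{n,k}(q;t)$ has leading term $t^n$, while on the RHS only the $j=n$ summand contributes, giving $\qfact{n}\cdot q^{0}\cdot\qbinom{n-1-k}{0}{q}\cdot \frac{1}{\qfact{n}}=1$. So I need $n$ more evaluation points.

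For these I would choose $t_i := q^{-i}\qint{i}$ for $i=0,1,\ldots,n-1$ (distinct as rational functions in $q$). These are adapted to both the $q$-content structure of the LHS and to the $q$-binomial basis of the RHS. Directly from $\qint{m}=(q^m-1)/(q-1)$ one checks the elementary identities
\[
t_i+\qint{j}=q^{-i}\qint{i+j}, \qquad t_i-q^{-s}\qint{s}=q^{-i}\qint{i-s},
\]
from which
\[
\tilde{E}_{n,k}(q;t_i) = q^{-in}\prod_{j=-k}^{n-k-1}\qint{i+j},
\qquad
\binom{t_i}{\ell}_q = q^{-i\ell}\qbinom{i}{\ell}{q}.
\]
When $0\le i\le k$, both sides vanish trivially: the LHS product contains the factor $\qint{0}=0$, and every term of the RHS is killed since it requires $\qbinom{i}{j}{q}$ with $j\ge k+1>i$. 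When $k+1\le i\le n-1$ the LHS simplifies to $q^{-in}\,\qfact{n+i-k-1}/\qfact{i-k-1}$.

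The heart of the argument is matching the RHS in this second range. Substituting the formula for $\binom{t_i}{\ell}_q$ (with $\ell=n-j$) and using the identity $-(n-j)j-ij=-in+(n-j)(i-j)$ to factor out $q^{-in}$, the RHS becomes
\[
q^{-in}\,\qfact{n}\sum_{j=k+1}^{i} q^{(n-j)(i-j)}\qbinom{i}{j}{q}\qbinom{n-1-k}{n-j}{q}.
\]
This is exactly the second form of the $q$-Chu--Vandermonde identity in \cref{lemma:qvandermonde} with parameters $(a,b,c,j)=(n-1-k,\,i,\,n,\,j)$, so the sum collapses to $\qbinom{n-1-k+i}{n}{q}$. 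After multiplying by $\qfact{n}$ and simplifying, the RHS becomes $q^{-in}\,\qfact{n+i-k-1}/\qfact{i-k-1}$, matching the LHS.

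The main obstacle is choosing the right evaluation points. The roots of the LHS, namely $t=-\qint{j}$, reduce the identity to showing certain $q$-binomial sums vanish, which is no easier than the original claim. The points $t_i=q^{-i}\qint{i}$ succeed because they are essentially the roots of the $q$-binomial basis polynomials $\binom{t}{\ell}_q$: the LHS telescopes to a single ratio of $q$-factorials, while the RHS turns into a $q$-Chu--Vandermonde sum. This is precisely the $q$-analogue of the classical Chu--Vandermonde step used in the $q=1$ proof of \cref{thm:introsymmetricsummary}(3) via \cref{eq:chu-vandermonde}.
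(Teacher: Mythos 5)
Your proof is correct and follows essentially the same route as the paper: both compare the two sides as degree-$n$ polynomials in $t$ by evaluating at points of the form $-\qint{-m}=q^{-m}\qint{m}$ (the paper takes $m>n$, you take $0\le i\le n-1$ together with the leading coefficient), telescope the left-hand side into a ratio of $q$-factorials, and collapse the right-hand side with the $q$-Chu--Vandermonde identity \eqref{eq:qVand2}. The only difference is cosmetic: your choice of index range forces you to handle the degenerate cases $0\le i\le k$ where both sides vanish, which you do correctly.
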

\begin{proof}
Note that both the right and left-hand-sides are polynomials of degree $n$ in $t$,
so it is sufficient to check they agree on $n+1$ distinct values of $t$.

To do so, we verify the identity for $t = -\qint{-m}$ and any $m > n$. We claim that for $m > n$, both sides evaluated at $t = -\qint{-m}$ are equal to
\begin{equation}\label{eq:rewriteEtilde}
q^{-m n} \qfact{n-k-1} \qbinom{m+n-k-1}{m}{q} \qint{m} \qint{m-1} \cdots \qint{m-k}.
\end{equation}
To prove this, we begin with the left-hand-side of \cref{eq-prop:etilde}.
Note that
\[ - \qint{-m} + \qint{j} = q^{-m} \qint{m+j}, \] so that
\begin{align*}
    \tilde{E}_{n,k}(q;-\qint{-m})
    &= \prod_{j=-k}^{n-k-1} q^{-m} \qint{m+j} = q^{-m n} \left( \prod_{j=1}^{n-k-1} \qint{m+j} \right) \left(  \prod_{j=-k}^{0} \qint{m+j} \right) \\
    &= q^{-m n} \qfact{n-k-1}  \qbinom{m+n-k-1}{m}{q} \qint{m} \qint{m-1} \cdots \qint{m-k}.
\end{align*}
On the other hand, evaluating $\binom{t}{j}_q$ at $t = -\qint{-m}$ gives
\[
    \binom{-\qint{-m}}{j}_q
    = \frac{1}{\qfact{j}} \prod_{i=0}^{j-1} ( \qint{-i} - \qint{-m} )
    = \frac{1}{\qfact{j}} \prod_{i=0}^{j-1} q^{-m} \qint{m-i}
    = \frac{q^{-m j}}{\qfact{j}} ,\qint{m} \qint{m-1} \cdots \qint{m-(j-1)},
\]
and therefore evaluating the right-hand-side of \cref{eq-prop:etilde} at $t = -\qint{-m}$, we obtain
\begin{multline*}
    \qfact{n} \sum_{j = k+1}^n q^{-(n-j)j} \qbinom{n-1-k}{n-j}{j} \binom{-\qint{-m}}{j}_q \\
    \begin{aligned}[t]
        &= \qfact{n}  \sum_{j = k+1}^n q^{-(n-j)j} \qbinom{n-1-k}{n-j}{j} \left( \frac{1}{\qfact{j}}\prod_{i=0}^{j-1} q^{-m} \qint{m-i}\right) \\
        &= \sum_{j = k+1}^n q^{-(n-j)j -m j}
            \frac{\qfact{n} \qfact{n-1-k} \qint{m-(j-1)} \cdots \qint{m}}{\qfact{n-j} \qfact{j-k-1} \qfact{j}}   \\
        &= \qfact{n-1-k} \left( \sum_{j = k+1}^n q^{-(n-j)j -m j} \qbinom{n}{j}{q} \qbinom{m-(k+1)}{m-j}{q} \right) \qint{m-k} \qint{m-k-1} \cdots \qint{m} \\
        &= q^{-m n} \qfact{n-k-1}  \qbinom{m+n-k-1}{m}{q} \qint{m-k} \cdots \qint{m}.
    \end{aligned}
\end{multline*}
The last equality is the $q$-Chu--Vandermonde formula \eqref{eq:qVand2} with $a = m-k-1$, $b = n$, and $c = m$.
\end{proof}

We are now ready to prove \cref{thm:HeckeJacksonMs}.
\begin{proof}[Proof of \cref{thm:HeckeJacksonMs}] We will make use of two results
about the coefficients $\M^{(n-1)}_{(r_1, r_2, \ldots, r_m)}(q)$
whose proofs we postpone to \S\ref{sssec:interpretation-and-combinatorics-of-Ms},
where we study their combinatorial properties
(\cref{prop:explicit-formula-Mq} and~\cref{lem:qMs-qmultinomial}).

Before we do so, we imitate \cref{eq:blueprint-F-expanded}
and \cref{eq:blueprint-F-using-tilde-E},
applying \cref{prop:isoprojtechnique} to \cref{eq:F in terms of qE} to conclude
\begin{equation} \label{eq: qF in terms of etildes}
    F_n(q;{\bf t}) = \frac{1}{\qfact{n}} \sum_{k=0}^{n-1} (-1)^k q^{\binom{k}{2}} \qbinom{n-1}{k}{q}
    \tilde{E}_{n,k}(q;t_1) \cdots \tilde{E}_{n,k}(q;t_m).
\end{equation}

Next, we use \cref{prop:etilde} to rewrite $\tilde{E}_{n,k}(q;t_i)$ in the basis $\left\{\binom{t_i}{n-r_i}_q \right\}$:
\begin{multline*}
    F_n(q;{\bf t}) =
    \qfact{n}^{m-1} \sum_{k=0}^{n-1} (-1)^k q^{\binom{k}{2}} \qbinom{n-1}{k}{q}
        {\sum_{0 \leq r_1,\ldots,r_m \leq n-k-1}} \quad  \prod_i \left( q^{-(n-r_i)r_i} \qbinom{n-1-k}{r_i}{q} \binom{t}{n-r_i}_q\right) \\ =
    \qfact{n}^{m-1} \quad \smashoperator{\sum_{0 \leq r_1,\ldots,r_m \leq n-1}} \;\; q^{-\sum_i (n-r_i)r_i}
         \underbrace{\bigg( \smashoperator[r]{\sum_{k=0}^{n-1-\max_i\{r_i\}}} \;\; (-1)^k q^{\binom{k}{2}} \qbinom{n-1}{k}{q} \prod_i \qbinom{n-1-k}{r_i}{q} \bigg)}_{:=(*)}
            \binom{t}{n-r_1}_q \!\! \cdots \binom{t}{n-r_m}_q.
\end{multline*}
By \cref{prop:explicit-formula-Mq}, the summations $(*)$ in parentheses above are equal to
\[
    (*) = \sum_{k=0}^{n-1-\max_i\{r_i\}} (-1)^k q^{\binom{k}{2}} \qbinom{n-1}{k}{q} \prod_i \qbinom{n-1-k}{r_i}{q}
    = \M^{n-1}_{(r_1,\ldots,r_m)}(q).
\]
Thus,
\[
    F_n(q;{\bf t}) =
        \qfact{n}^{m-1} \sum_{0 \leq r_1,\ldots,r_m \leq n-1} q^{-\sum_i (n-r_i)r_i} \ \M^{n-1}_{(r_1,\ldots,r_m)}(q) \
            \binom{t_1}{n-r_1}_q \binom{t_2}{n-r_2}_q \cdots \binom{t_m}{n-r_m}_q,
\]
which proves the first statement of the theorem.

To prove the second statement,
let $\lambda = (\lambda_1,\ldots,\lambda_m)$ be a partition of $n-1$.
Since $\M^{n-1}_{(r_1,\ldots,r_m)} = 0$ whenever $r_1 + \cdots + r_m < n-1$ and the degree of $\binom{t}{j}_q$ in $t$ is $j$, we have
\[
    [{\bf t}^{{\bf n}-\lambda}] F_n(q;{\bf t})
    = \qfact{n}^{m-1} q^{-\sum_i (n-\lambda_i)\lambda_i}  \ \M^{n-1}_{(\lambda_1,\ldots,\lambda_m)}(q) \
            [{\bf t}^{{\bf n}-\lambda}]\binom{t_1}{n-\lambda_1}_q \binom{t_2}{n-\lambda_2}_q \cdots \binom{t_m}{n-\lambda_m}_q.
\]
Now, using \cref{lem:qMs-qmultinomial} and the fact that the leading coefficient of $\binom{t_i}{n-\lambda_i}_{q}$ is
$ \frac{1}{\qfact{n-\lambda_i}}$,
we have
\begin{align*}
    [T_{\sfc}]\,\, e_{\lambda}(\Xi_n)
    &= [{\bf t}^{{\bf n}-\lambda}] F_n(q;{\bf t}) \\
    &= \qfact{n}^{m-1} \; q^{-\sum_i (n-\lambda_i)\lambda_i + \binom{n-1}{2} - \sum_i \binom{\lambda_i}{2}} \qbinom{n-1}{\lambda_1,\ldots,\lambda_m}{q} \frac{1}{\qfact{n-\lambda_1}\cdots \qfact{n-\lambda_m}} \\
    &= q^{\sum_i \binom{\lambda_i}{2} - \binom{n}{2}} \frac{1}{\qint{n}} \prod_{i} \qbinom{n}{\lambda_i}{q},
\end{align*}
completing the proof of \cref{thm:HeckeJacksonMs}.
\end{proof}

\subsection{Evaluations of other symmetric functions}
\label{ssec:evaluations-other-symmetric-functions}

As highlighted in the introduction, an important consequence of
\cref{thm:HeckeJacksonMs} is the explicit link it provides between the
coefficient of $T_{\sfc}$ in $f(\Xi_n(q))$ and the $q$-principal
specialization of $f$ for \emph{any} homogeneous symmetric function $f$ of
degree $n - 1$.
We prove this (i.e. \cref{thm:intro_reciprocity} from the Introduction) here and use it to determine the coefficient of
$T_{\sfc}$ in $h_{\lambda}(\Xi_n(q))$,
$p_{\lambda}(\Xi_n(q))$, and $s_{\lambda}(\Xi_n(q))$.

\begin{remark}\rm
We learned of the connection between principal specializations of $f$ and the evaluations $f(\Xi_n)$ for the symmetric group via private communication from John Irving, and are grateful to him for pointing out this insight.
Notably, his argument can be extended to the Hecke algebra using our
\cref{thm:HeckeJacksonMs}, as shown below.

\end{remark}
Given a symmetric function $f \in \Lambda$, the
\emph{$q$-principal specialization} of $f$, denoted $ps_f(n; q)$, is
defined as
\begin{equation*}
    ps_f(n; q) := f(1, q, q^2, \ldots, q^{n-1}).
\end{equation*}
The first step is to relate \cref{thm:HeckeJacksonMs} to $q$-principal specializations.

\begin{theorem}
    \label{thm: key lemma q reciprocity}
    Let $f$ be a homogeneous symmetric function of degree $n-1$. Then
    \begin{eqnarray}
        [T_{\sfc}] \, f(\Xi_n(q))
        & = &
        \frac{q^{-\binom{n}{2}}}{\qint{n}} \ ps_f(n; q).
        \label{eq: cycle coeff Hecke JM eval f as psq of f}
    \end{eqnarray}
\end{theorem}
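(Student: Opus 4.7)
The plan is to exploit the linearity of both sides in $f$ and reduce the identity to a check on a convenient basis of $\Lambda_{n-1}$. Both the map $f \mapsto [T_{\sfc}] f(\Xi_n(q))$ (by \cref{thm:FrancisWang}) and the principal specialization $f \mapsto ps_f(n;q)$ are linear in $f$, so it suffices to verify the equality on the elementary symmetric function basis $\{e_\lambda : \lambda \vdash n-1\}$ of $\Lambda_{n-1}$. For such partitions the coefficient $[T_{\sfc}] e_\lambda(\Xi_n(q))$ has already been computed explicitly in \cref{thm:HeckeJacksonMs}, so the proof reduces to matching that formula against $ps_{e_\lambda}(n;q)$.

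Concretely, for $\lambda = (\lambda_1, \ldots, \lambda_m) \vdash n-1$, equation \cref{long-cycle-in-e-lambda-hecke-q} yields
\[
    [T_{\sfc}] \, e_\lambda(\Xi_n(q)) = \frac{q^{\sum_i \binom{\lambda_i}{2} - \binom{n}{2}}}{\qint{n}} \prod_{i} \qbinom{n}{\lambda_i}{q}.
\]
On the other hand, the classical $q$-binomial theorem $\prod_{j=0}^{n-1}(1 + q^j t) = \sum_{k} q^{\binom{k}{2}} \qbinom{n}{k}{q} t^k$ gives $e_k(1, q, \ldots, q^{n-1}) = q^{\binom{k}{2}} \qbinom{n}{k}{q}$. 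Since the principal specialization is multiplicative and $e_\lambda = \prod_i e_{\lambda_i}$, one obtains $ps_{e_\lambda}(n;q) = \prod_i q^{\binom{\lambda_i}{2}} \qbinom{n}{\lambda_i}{q}$. Multiplying by $q^{-\binom{n}{2}}/\qint{n}$ then matches the two expressions on the nose, and the theorem follows by linearity.

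I do not anticipate a substantive obstacle: all of the heavy lifting is already packaged into \cref{thm:HeckeJacksonMs}, and what remains is a single line of $q$-binomial bookkeeping. The only minor care needed is that \cref{long-cycle-in-e-lambda-hecke-q} is stated for compositions of $n-1$, but restricting to partitions is harmless since $e_\lambda$ is symmetric in its parts and $\{e_\lambda : \lambda \vdash n-1\}$ already spans $\Lambda_{n-1}$. An alternative route, worth mentioning but strictly harder, would start from \cref{prop:isoprojtechnique} and attempt to identify the alternating-sum functional $f \mapsto \tfrac{1}{\qfact{n}}\sum_k (-1)^k q^{\binom{k}{2}}\qbinom{n-1}{k}{q} f\bigl(\qcont((n{-}k,1^k))\bigr)$ with $\tfrac{q^{-\binom{n}{2}}}{\qint{n}} \cdot f(1,q,\ldots,q^{n-1})$ via a direct inclusion-exclusion argument; the basis-reduction approach sidesteps this entirely and is what I would write up.
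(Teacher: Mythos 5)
Your proposal is correct and follows essentially the same route as the paper's proof: both reduce by linearity to the elementary symmetric basis $\{e_\lambda\}_{\lambda\vdash n-1}$, invoke \cref{long-cycle-in-e-lambda-hecke-q} from \cref{thm:HeckeJacksonMs} for the coefficient of $T_{\sfc}$, and match it against $ps_{e_\lambda}(n;q)=\prod_i q^{\binom{\lambda_i}{2}}\smallqbinom{n}{\lambda_i}{q}$. No gaps; the write-up you describe is what the paper does.
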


\begin{proof}
    Since both sides of \cref{eq: cycle coeff Hecke JM eval f as psq of f}
    are linear in $f$, it suffices to prove both sides coincide
    on a basis of the space $\Lambda_{n-1}$ of homogeneous symmetric functions of degree $n-1$.
    We will do so for the basis of $\Lambda_{n-1}$ given by the elementary symmetric functions $\{e_\lambda\}_{\lambda \vdash n{-}1}$.

    Let $\lambda$ be a partition of $n-1$.     By \cite[Prop. 7.8.3]{EC2}, the $q$-principal specialization of $e_{\lambda}$ is
    \begin{equation*}
        e_{\lambda}(1,q,\ldots,q^{n-1})=\prod_i e_{\lambda_i}(1,q,\ldots,q^{n-1}) = \prod_i q^{\binom{\lambda_i}{2}}\qbinom{n}{\lambda_i}{q},
    \end{equation*}
    On the other hand, by \cref{long-cycle-in-e-lambda-hecke-q} in
    \cref{thm:HeckeJacksonMs}, we have that
    \begin{equation}
        [T_{\sfc}]\,\, e_{\lambda}(\Xi_n(q))
        =
        q^{\sum_i \binom{\lambda_i}{2}-\binom{n}{2}} \frac{1}{\qint{n}} \prod_{i} \qbinom{n}{\lambda_i}{q}.
    \end{equation}
We can thus substitute in $ps_{e_{\lambda}}(n;q)$ to obtain
    \begin{equation}
        [T_{\sfc}]\,\, e_{\lambda}(\Xi_n(q))
        = \frac{q^{-\binom{n}{2}}}{\qint{n}} \ ps_{e_{\lambda}}(n;q),
    \end{equation}
    as desired.
\end{proof}

Next, we use \cref{thm: key lemma q reciprocity} to determine the
coefficient of $T_{\sfc}$ in the evaluations $h_{\lambda}(\Xi_n(q))$,
$p_{\lambda}(\Xi_n(q))$, and $s_{\lambda}(\Xi_n(q))$.

\begin{theorem}
    \label{thm:other-symmetric-functions}
    Let $\lambda \vdash n - 1$. Then
    \begin{eqnarray}
        [T_{\sfc}] \,\, h_{\lambda}(\Xi_n(q))
        & = &
        \frac{q^{-\binom{n}{2}}}{\qint{n}} \, \prod_{i = 1}^{\ell(\lambda)} \qbinom{n+\lambda_i-1}{\lambda_i}{q},
        \label{long-cycle-in-h-lambda-hecke-q}
        \\\null
        [T_{\sfc}] \,\, p_{\lambda}(\Xi_n(q))
        & = &
        \frac{q^{-\binom{n}{2}}}{\qint{n}} \, \prod_{i=1}^{\ell(\lambda)} \frac{1-q^{n\lambda_i}}{1-q^{\lambda_i}},
        \label{long-cycle-in-p-lambda-hecke-q}
        \\\null
        [T_{\sfc}] \,\, s_{\lambda}(\Xi_n(q))
        & = &
        \frac{q^{b(\lambda)-\binom{n}{2}}}{\qint{n}} \, \prod_{(i,j)\in \lambda} \frac{1-q^{n+j-i}}{1-q^{h(i,j)}},
        \label{long-cycle-in-s-lambda-hecke-q}
    \end{eqnarray}
    where $b(\lambda)=\sum_{i=1}^{\ell(\lambda)} (i-1)\lambda_i$,
    and $h(i,j) = \lambda_i+\lambda'_j-i-j+1$ is the hook-length of the cell $(i,j)$ in $\lambda$.
\end{theorem}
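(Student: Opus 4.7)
The proof essentially assembles itself once Theorem~\ref{thm: key lemma q reciprocity} is in hand: since $h_\lambda$, $p_\lambda$, and $s_\lambda$ are all homogeneous symmetric functions of degree $n-1$ when $\lambda \vdash n-1$, the reciprocity
\[
    [T_{\sfc}] \, f(\Xi_n(q)) = \frac{q^{-\binom{n}{2}}}{\qint{n}} \, ps_f(n; q)
\]
applies in each case. The task thus reduces to computing the $q$-principal specialization of each basis element, which is classical. No further Hecke-algebraic input is needed.

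For the complete homogeneous case, multiplicativity gives $h_\lambda(1, q, \ldots, q^{n-1}) = \prod_i h_{\lambda_i}(1, q, \ldots, q^{n-1})$, and I would invoke the standard evaluation
\[
    h_k(1, q, \ldots, q^{n-1}) = \qbinom{n+k-1}{k}{q}
\]
(see e.g.\ \cite[Prop.~7.8.3]{EC2}), combined with Theorem~\ref{thm: key lemma q reciprocity}, to obtain~\eqref{long-cycle-in-h-lambda-hecke-q}. For the power-sum case, $p_\lambda = \prod_i p_{\lambda_i}$ together with the elementary geometric-series computation
\[
    p_k(1, q, \ldots, q^{n-1}) = \sum_{j=0}^{n-1} q^{jk} = \frac{1-q^{nk}}{1-q^k}
\]
immediately yields~\eqref{long-cycle-in-p-lambda-hecke-q}.

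For the Schur case, I would invoke Stanley's $q$-analogue of the hook-content formula \cite[Cor.~7.21.3]{EC2}, which states
\[
    s_\lambda(1, q, \ldots, q^{n-1}) = q^{b(\lambda)} \prod_{(i,j)\in \lambda} \frac{1-q^{n+j-i}}{1-q^{h(i,j)}},
\]
with $b(\lambda) = \sum_i (i-1)\lambda_i$ and $h(i,j)$ the hook-length, matching the notation of the theorem. Substituting into Theorem~\ref{thm: key lemma q reciprocity} produces~\eqref{long-cycle-in-s-lambda-hecke-q}.

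In short, the theorem is essentially a dictionary translating three classical $q$-principal specialization identities (for $h_\lambda$, $p_\lambda$, $s_\lambda$) into statements about coefficients of $T_{\sfc}$ in evaluations at the $q$-Jucys--Murphy elements. There is no genuine obstacle: the only substantive step is Theorem~\ref{thm: key lemma q reciprocity} itself (already proved via \cref{thm:HeckeJacksonMs}), and the Schur case is the most formula-heavy purely because the hook-content formula is the deepest of the three specializations being quoted.
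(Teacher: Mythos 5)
Your proposal is correct and matches the paper's proof essentially verbatim: both apply \cref{thm: key lemma q reciprocity} to $f = h_\lambda, p_\lambda, s_\lambda$ and then quote the classical $q$-principal specializations (the paper cites \cite[Prop.~7.8.3]{EC2} for $h_k$ and $p_k$ and \cite[Thm.~7.21.2]{EC2} for $s_\lambda$). No differences worth noting.
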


\begin{proof}
    \cref{long-cycle-in-h-lambda-hecke-q}
    follows from \cref{thm: key lemma q reciprocity}
    with $f = h_{\lambda}$ since
    \begin{equation*}
        ps_{h_k}(n; q)
        = h_k(1,q,\ldots,q^{n-1})
        = \qbinom{n+k-1}{k}{q}.
    \end{equation*}

    \cref{long-cycle-in-p-lambda-hecke-q}
    follows from \cref{thm: key lemma q reciprocity}
    with $f = p_{\lambda}$ since
    \begin{equation*}
        ps_{p_k}(n;q)
        = p_k(1,q,\ldots,q^{n-1})
        = \frac{1-q^{nk}}{1-q^k}
    \end{equation*}
    by~\cite[Prop.~7.8.3]{EC2}.

    \cref{long-cycle-in-s-lambda-hecke-q}
    follows from \cref{thm: key lemma q reciprocity}
    with $f = s_{\lambda}$ since
    \begin{equation*}
        ps_{s_{\lambda}}(n;q)
        = s_{\lambda}(1,q,\ldots,q^{n-1})
        = q^{b(\lambda)} \prod_{(i,j) \in \lambda} \frac{1-q^{n+j-i}}{1-q^{h(i,j)}}
    \end{equation*}
    by~\cite[Thm.~7.21.2]{EC2},
    where $h(i,j)=\lambda_i+\lambda'_j-i-j+1$ is the hook-length of the cell $(i,j)$ in $\lambda$.
\end{proof}

\section{Interpretation and combinatorics of $\M^n_{\bf r}(q)$}
\label{sssec:interpretation-and-combinatorics-of-Ms}
In this section, we study the numbers $\M^n_{\bf r}(q)$ that appear in \cref{eq:def-Mqs}.
Given $q$ a prime power and integers $n \in \Z_{\geq 0}$ and ${\bf r} = (r_1, \ldots, r_m) \in \Z_{\geq 0}^m$,
let
\[ \MM_{\bf r}^n := \Big \{ (V_1,\ldots,V_m): V_i \subset \mathbb{F}_q^{n} \textrm{ with } \dim(V_i) = r_i \textrm{ and } V_1 + \cdots + V_m = \mathbb{F}_q^{n} \Big\},\]
so that, by definition, $\M^{n}_{\bf r}(q):= \# \MM_{\bf r}^n$.

While the $\M^n_{\bf r}(q)$ appear in \cref{thm:HeckeJacksonMs}, it is not evident that they
\begin{enumerate}
    \item lie in $\Z_{\geq 0}[q]$ (as a function of $q$), or
    \item are a $q$-analogue of the numbers $\M^{n}_{\bf r}$ in \cref{thm:introsymmetricsummary}(1), in the sense that they \emph{$q$-count} the objects in $\M_{\bf r}^n$ with an appropriate statistic.
\end{enumerate}
We will show that both of these things are true (see \cref{prop:positive recurrence Ms,prop:M-via-q-statistic}, respectively), as well as several recurrences and formulas for $\M^n_{\bf r}(q)$.

We begin by showing that $\M^n_{\bf r}(q)$ has an elegant alternating sum formula.
To do so, we consider the bilinear form $\langle x , y \rangle = x^t y$ on $\F_q^n$,
with respect to which we take orthogonal subspaces $W^{\perp}$ for any $W \subset \F_q^n$.
The map $W \to W^\perp$ is inclusion reversing and satisfies $\dim(W) + \dim(W^\perp) = n$ (even if $W + W^\perp \neq \F_q^n$).

\begin{proposition}
\label{prop:explicit-formula-Mq}
For positive integers $n \in \Z_{\geq 0}$ and ${\bf r} \in \Z_{\geq 0}^m$, and any prime power $q$, we have
\begin{equation}
    \label{eq:formula-Mq}
    \M^n_{\bf r}(q) = \sum_{k=0}^{n-\max(r_i)} (-1)^k q^{\binom{k}{2}} \qbinom{n}{k}{q}\prod_{i=1}^m \qbinom{n-k}{r_i}{q}.
\end{equation}
Thus, $\M^n_{\bf r}(q)$ is a polynomial function of $q$.
\end{proposition}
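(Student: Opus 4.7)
The plan is to prove the formula by $q$-binomial inversion applied to a natural stratification of tuples of subspaces by their span.

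First, I would classify all tuples $(V_1, \ldots, V_m)$ of subspaces of $\F_q^n$ with $\dim V_i = r_i$ according to the subspace $U = V_1 + \cdots + V_m$ they span. For each $U \subseteq \F_q^n$ of dimension $j$, the tuples spanning exactly $U$ are in bijection with spanning tuples inside a fixed $j$-dimensional ambient space, so they are counted by $\M^{j}_{\bf r}(q)$. Since there are $\qbinom{n}{j}{q}$ choices of such $U$, this gives the identity
\begin{equation}\label{eq:span-stratification}
\prod_{i=1}^m \qbinom{n}{r_i}{q} \;=\; \sum_{j=0}^n \qbinom{n}{j}{q}\, \M^{j}_{\bf r}(q).
\end{equation}

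Second, I would invoke the standard $q$-binomial inversion formula: if $a_n = \sum_{j \geq 0} \qbinom{n}{j}{q}\, b_j$ for all $n \geq 0$, then
\[ b_n \;=\; \sum_{j=0}^n (-1)^{n-j}\, q^{\binom{n-j}{2}}\, \qbinom{n}{j}{q}\, a_j. \]
Applying this with $a_n = \prod_i \qbinom{n}{r_i}{q}$ and $b_n = \M^{n}_{\bf r}(q)$ (legitimate by~\eqref{eq:span-stratification}) yields
\[ \M^n_{\bf r}(q) \;=\; \sum_{j=0}^n (-1)^{n-j}\, q^{\binom{n-j}{2}}\, \qbinom{n}{j}{q}\, \prod_{i=1}^m \qbinom{j}{r_i}{q}. \]

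Finally, the substitution $k = n - j$ and the identity $\qbinom{n}{n-k}{q} = \qbinom{n}{k}{q}$ rewrite this as the claimed sum, and the observation that $\qbinom{n-k}{r_i}{q} = 0$ whenever $n - k < r_i$ truncates the range of summation to $0 \leq k \leq n - \max_i r_i$, recovering~\eqref{eq:formula-Mq}. I do not anticipate any real obstacle here: the stratification~\eqref{eq:span-stratification} is immediate from the definition of $\M^{j}_{\bf r}(q)$, and $q$-binomial inversion is classical. The only small subtlety worth flagging in the write-up is that $\M^{j}_{\bf r}(q) = 0$ when $j < \max_i r_i$, so the sums are automatically finite and the index ranges cause no trouble.
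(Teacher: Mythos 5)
Your proposal is correct and is essentially the same argument as the paper's: both invert the relation expressing the product $\prod_i \qbinom{n}{r_i}{q}$ as a sum of $\M^{j}_{\bf r}(q)$ over subspaces containing the span. The only difference is packaging — the paper phrases the inversion as Möbius inversion on the lattice of subspaces (orienting containments via orthogonal complements), whereas you stratify by the span directly and invoke classical $q$-binomial (Gauss) inversion on the dimension-indexed sequence; these are the same inversion, and your version arguably reads a bit more elementarily since it avoids the perp map and the explicit Möbius function of the subspace lattice.
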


\begin{proof}
Fix $n \in \Z_{\geq 0}$ and ${\bf r} \in \Z_{\geq 0}^m$.
Define functions $f,g$ on the lattice of subspaces of $\F_q^n$ as follows.
For a subspace $W \subseteq \F_q^n$,
\begin{align*}
        f(W) & = \#\big\{ (V_1,\ldots,V_m) \mid \dim V_i = r_i \text{ and } (V_1 + \cdots + V_m)^\perp = W \big\}, \text{ and } \\
        g(W) & = \#\big\{ (V_1,\ldots,V_m) \mid \dim V_i = r_i \text{ and } (V_1 + \cdots + V_m)^\perp \supseteq W \big\}.
\end{align*}
By definition, $\M^n_{\bf r}(q) = f(\{ 0 \})$.
Moreover, note that for all subspaces $W \subseteq \mathbb{F}_q^n$,
\[ g(W) = \sum_{U \supseteq W} f (U). \]
Therefore, by Möbius inversion,
\[ f (W) = \sum_{U \supseteq W} \mu(W,U) g(U) \] for all $W \subseteq \mathbb{F}_q^n$.
The Möbius function of the lattice of subspaces of $\mathbb{F}_q^n$ has the following explicit formula:
\[ \mu(W,U) = (-1)^{\dim U/W} q^{\binom{\dim U/W}{2}}. \]
Finally, note that $(V_1 + \cdots + V_m)^\perp \supseteq W$ if and only if $V_i \subseteq W^\perp$ for all $i$.
Since $\dim W^\perp = n - \dim W$, we deduce
\[
    g(W) = \prod_{i=1}^m \qbinom{n-\dim W}{r_i}{q}.
\]
We thus conclude that
\[
    \M^n_{\bf r}(q) = f (\{0\})
    = \sum_{U \subseteq \mathbb{F}_q^n} (-1)^{\dim U} q^{\binom{\dim U}{2}} g(U)
    = \sum_{k=0}^n (-1)^{k} q^{\binom{k}{2}} \qbinom{n}{k}{q} \prod_{i=1}^m \qbinom{n - k}{r_i}{q}.
\]
In the last step we grouped subspaces $U \subseteq \mathbb{F}_q^n$ by their dimension. Lastly, since $q$-binomial coefficients are polynomials in $q$, it follows that $\M^n_{\bf r}(q)$ is also a polynomial function in $q$ for fixed $n \in \Z_{\geq 0}$ and ${\bf r} \in \Z^m_{\geq 0}$.
\end{proof}

\begin{remark}
    Note that in principle, $\M^n_{\bf r}(q)$ is defined only when $q$ is a prime power. However, based on the polynomiality result above (\cref{prop:explicit-formula-Mq}), by abuse of notation, we will also use $\M^n_{\bf r}(q)$ to denote this polynomial, treating $q$ as an indeterminant. 
\end{remark}

Comparing \cref{eq:formula-Mq,eq:rel Ms}, we deduce that the $\M^n_{\bf r}(q)$ are indeed a $q$-analogue of the numbers $\M^n_{\bf r}$.

\begin{corollary}
\label{cor:Mq-are-q-analogues}
For all $n \in \mathbb{Z}_{\geq 0}$ and ${\bf r} \in \mathbb{Z}_{\geq 0}^m$,
\[
    \M^n_{\bf r}(1) = \M^n_{\bf r}.
\]
\end{corollary}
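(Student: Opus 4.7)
The plan is to compare the explicit alternating-sum formula for $\M^n_{\bf r}(q)$ with the corresponding inclusion-exclusion formula for $\M^n_{\bf r}$, and observe that they match at $q=1$.

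First, I would specialize Proposition~\ref{prop:explicit-formula-Mq} at $q=1$. Since $\qbinom{a}{b}{q}\big|_{q=1} = \binom{a}{b}$ and $q^{\binom{k}{2}}\big|_{q=1} = 1$, the formula reduces to
\[
    \M^n_{\bf r}(1) = \sum_{k=0}^{n-\max(r_i)} (-1)^k \binom{n}{k}\prod_{i=1}^m \binom{n-k}{r_i}.
\]

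Next, I would invoke identity~\eqref{eq:rel Ms}. Although stated there with parameter $n-1$, the argument is purely inclusion-exclusion and establishes the general identity
\[
    \M^{N}_{(r_1,\ldots,r_m)} = \sum_{k=0}^{N - \max_i\{r_i\}} (-1)^k \binom{N}{k} \prod_{i=1}^{m} \binom{N-k}{r_i}
\]
for any $N \geq 0$. Applying this with $N = n$ gives exactly the right-hand side of the previous display, so $\M^n_{\bf r}(1) = \M^n_{\bf r}$.

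There is no real obstacle beyond matching the two formulas. The only mild bookkeeping point is to confirm that the summation ranges are consistent: if $k > n - \max_i r_i$, then $\binom{n-k}{r_i} = 0$ for some $i$, so extending or truncating the sum does not affect its value. This is a short proof that leverages the combinatorial reformulation of the polynomial $\M^n_{\bf r}(q)$ furnished by Proposition~\ref{prop:explicit-formula-Mq}.
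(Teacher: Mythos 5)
Your proposal is correct and matches the paper's own argument: the paper derives the corollary precisely by comparing the $q=1$ specialization of \cref{prop:explicit-formula-Mq} with the inclusion–exclusion identity \cref{eq:rel Ms}. Your extra remarks about the index shift from $n-1$ to $n$ and the harmless truncation of the summation range are just the bookkeeping the paper leaves implicit.
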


The following surprising cancellation is a direct result of the definition of $\M_{\bf r}^n(q)$.

\begin{corollary}
If $r_1 + \cdots + r_m < n$, then
\[ \sum_{k=0}^{n-\max(r_i)} (-1)^k q^{\binom{k}{2}} \qbinom{n}{k}{q}\prod_{i=1}^m \qbinom{n-k}{r_i}{q} = 0. \]
\end{corollary}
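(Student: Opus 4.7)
The plan is to interpret the alternating sum as a count and then observe that the count is vacuous. Concretely, by \cref{prop:explicit-formula-Mq}, the sum in question is precisely $\M^n_{\bf r}(q)$, which by \cref{eq:def-Mqs} counts tuples $(V_1, \ldots, V_m)$ of subspaces $V_i \subset \mathbb{F}_q^n$ with $\dim(V_i) = r_i$ satisfying $V_1 + \cdots + V_m = \mathbb{F}_q^n$.

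The key observation is the standard dimension inequality $\dim(V_1 + \cdots + V_m) \leq \dim V_1 + \cdots + \dim V_m = r_1 + \cdots + r_m$. Under the hypothesis $r_1 + \cdots + r_m < n$, this forces $\dim(V_1 + \cdots + V_m) < n$, so $V_1 + \cdots + V_m$ is a proper subspace of $\mathbb{F}_q^n$. Therefore the set $\MM^n_{\bf r}$ is empty, and $\M^n_{\bf r}(q) = 0$.

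There is no real obstacle here: the content is entirely in \cref{prop:explicit-formula-Mq}, which replaces an a priori signed quantity by a manifestly nonnegative cardinality that is then seen to vanish by dimension count. In particular, this gives a conceptual explanation of the cancellation that would otherwise be mysterious from the $q$-binomial identity alone.
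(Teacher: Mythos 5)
Your proof is correct and is essentially the paper's own argument: both identify the alternating sum with $\M^n_{\bf r}(q)$ via \cref{prop:explicit-formula-Mq} and then note that the subadditivity of dimension makes the set $\MM^n_{\bf r}$ empty when $r_1+\cdots+r_m<n$. No differences worth noting.
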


\begin{proof}
If $r_1 + \cdots + r_m < n$, then a tuple of subspaces $(V_1,\ldots,V_m)$ of $\mathbb{F}_q^n$ with $\dim(V_i)=r_i$ cannot sum to $\mathbb{F}_q^n$. Therefore, $\M_{\bf r}^n(q) = 0$ and the result follows by \cref{prop:explicit-formula-Mq}.
\end{proof}

Next, in \cref{lem:qMs-qmultinomial,cor:qMs m=2}, we give explicit closed formulas for two special cases of $\M_{\bf r}^n(q)$.

\begin{proposition}  \label{lem:qMs-qmultinomial}
If ${\bf r}= (r_1, \ldots, r_m) \in \Z_{\geq 0}^m$ is a composition of $n$ (that is, if $r_1 + \cdots + r_m = n$), then
\[  \M^n_{\bf r}(q) = q^{\binom{n}{2}-\sum_i \binom{r_i}{2}} \qbinom{n}{r_1,\ldots,r_m}{q}. \]
\end{proposition}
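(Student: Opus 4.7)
The plan is to give a direct combinatorial/group-theoretic proof, exploiting the fact that when $\sum_i r_i = n$ the spanning condition forces an internal direct sum.

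First, I would observe that if $(V_1, \ldots, V_m) \in \MM_{\bf r}^n$ and $r_1 + \cdots + r_m = n$, then the surjectivity of the addition map $V_1 \oplus \cdots \oplus V_m \to \F_q^n$ together with equality of dimensions forces it to be an isomorphism. Hence such tuples are precisely the ordered direct sum decompositions of $\F_q^n$ with prescribed block dimensions $r_1, \ldots, r_m$.

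Next, I would count these decompositions via an orbit-stabilizer / basis-partition argument. Every ordered basis $(e_1, \ldots, e_n)$ of $\F_q^n$ yields a decomposition by setting $V_i := \mathrm{span}(e_{r_1 + \cdots + r_{i-1}+1}, \ldots, e_{r_1 + \cdots + r_i})$, and two ordered bases give the same decomposition iff they differ by an element of $\GL_{r_1}(\F_q) \times \cdots \times \GL_{r_m}(\F_q)$ acting blockwise. Since ordered bases of $\F_q^n$ are in bijection with $\GL_n(\F_q)$, we get
\[
\M^n_{\bf r}(q) \;=\; \frac{|\GL_n(\F_q)|}{\prod_{i=1}^m |\GL_{r_i}(\F_q)|}.
\]

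Finally, I would simplify using the standard factorization $|\GL_k(\F_q)| = q^{\binom{k}{2}} \prod_{j=1}^{k}(q^j - 1)$, which comes from pulling out $q^i$ from each factor $q^k - q^i$. Substituting yields
\[
\M^n_{\bf r}(q) \;=\; q^{\binom{n}{2} - \sum_i \binom{r_i}{2}} \cdot \frac{\prod_{j=1}^n(q^j-1)}{\prod_i \prod_{j=1}^{r_i}(q^j-1)} \;=\; q^{\binom{n}{2}-\sum_i \binom{r_i}{2}} \qbinom{n}{r_1,\ldots,r_m}{q},
\]
where the $(q-1)$ factors cancel in passing from $\prod(q^j - 1)$ to $\qfact{\cdot}$. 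There is no serious obstacle here; the only point worth care is the equivalence between $V_1 + \cdots + V_m = \F_q^n$ and $V_1 \oplus \cdots \oplus V_m = \F_q^n$ under the dimension hypothesis, which is the crucial step enabling the clean closed form. Alternatively, one could derive the identity algebraically from \cref{prop:explicit-formula-Mq} using the $q$-Chu--Vandermonde identity (\cref{lemma:qvandermonde}), but the geometric argument above is more transparent.
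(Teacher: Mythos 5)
Your proof is correct and follows essentially the same route as the paper's: both hinge on the observation that $\sum_i r_i = n$ forces the sum $V_1 + \cdots + V_m = \F_q^n$ to be direct, and both then count the resulting decompositions as the quotient $\GL_n(\F_q)/\bigl(\GL_{r_1}(\F_q)\times\cdots\times\GL_{r_m}(\F_q)\bigr)$. The only (immaterial) difference is in the final evaluation: you substitute the explicit order formula $|\GL_k(\F_q)| = q^{\binom{k}{2}}\prod_{j=1}^{k}(q^j-1)$ and cancel, whereas the paper factors the quotient through the block upper-triangular parabolic subgroup to identify the $q$-multinomial as a flag count and the power of $q$ as the number of free entries.
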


\begin{proof}
Suppose that $r_1 + \cdots + r_m = n$.
In this case $V_1 + V_2 + \cdots + V_m = \F_q^n$ is necessarily a direct sum.
Therefore,
\[
    \M^n_{\bf r}(q) = \# \Big( \GL_n(\mathbb{F}_q) / \GL_{\bf r}(\F_q) \Big),
\]
where
\[ \GL_{\bf r}(\F_q) := \GL_{r_1}(\mathbb{F}_q) \times \cdots \times \GL_{r_m}(\mathbb{F}_q) \subset \GL_n(\mathbb{F}_q).\]

On the other hand, by the usual interpretation of the $q$-multinomial coefficients,
\[ \qbinom{n}{\bf r}{q} = \qbinom{n}{r_1,\ldots,r_m}{q} =  \# \big\{ U_1 \subset U_2 \subset \cdots \subset U_m = \F_q^n: \dim U_k = r_1 + \cdots + r_k  \big\}.\]
    Thus,
    \[
        \qbinom{n}{\bf r}{q} = \# \Big( \GL_n(\mathbb{F}_q)  / G({\bf r}) \Big),
    \]
    where $G({\bf r}) \subset \GL_n(\mathbb{F}_q)$ is the subgroup of block upper triangular matrices of the form
    \begin{equation}\label{flagmatrix}
        \begin{pmatrix}
            \GL_{r_1}(\mathbb{F}_q) & * &    & * \\
            0 & \GL_{r_2}(\mathbb{F}_q)  &   & * \\
            & & \ddots & \\
            0 & 0 &  & \GL_{r_m}(\mathbb{F}_q)
        \end{pmatrix}.
    \end{equation}
    Let $a$ be the number of free entries in \cref{flagmatrix}, so that
    \begin{align*}
        a &=  \dim_{\F_q}\Big( G({\bf r})/ \GL_{\bf r}(\F_q) \Big) \\
        &= \binom{n}{2} - \sum_{i=1}^m \binom{r_i}{2}.
    \end{align*}
    We can therefore conclude that
    \begin{align*}
        \M_{\bf r}^n(q) &= \# \Big( \GL_n(\mathbb{F}_q)  / \GL_{\bf r}(\F_q) \Big) \\
        &= \# \Big( \GL_n(\mathbb{F}_q)  / G({\bf r}) \Big)  \cdot \# \Big( G({\bf r}) / \GL_{\bf r}(\F_q) \Big)  \\
        &= \qbinom{n}{\bf r}{q} \ q^a. \qedhere
    \end{align*}
\end{proof}

The case $m=2$ has a closed formula for any possible ${\bf r} \in \Z_{\geq 0}^2$

\begin{proposition}
\label{cor:qMs m=2}
For all $n,r_1,r_2 \in \mathbb{Z}_{\geq 0}$, we have that
\[
\M^n_{(n-r_1,n-r_2)}(q) = q^{r_1r_2} \qbinom{n}{r_1,r_2,n-r_1-r_2}{q}.
\]
\end{proposition}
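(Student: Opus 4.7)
The plan is to give a direct bijective/double-counting argument, partitioning the set $\MM^n_{(n-r_1,n-r_2)}$ according to the intersection $V_1 \cap V_2$. First I would dispose of the degenerate case: if $r_1 + r_2 > n$, then $\dim(V_1 + V_2) \leq \dim V_1 + \dim V_2 = 2n - r_1 - r_2 < n$, so the set is empty and the formula holds since $\qbinom{n}{r_1,r_2,n-r_1-r_2}{q} = 0$ by convention.

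Assume now $r_1 + r_2 \leq n$. The Grassmann dimension formula forces every pair $(V_1,V_2) \in \MM^n_{(n-r_1,n-r_2)}$ to satisfy
\[
\dim(V_1 \cap V_2) = (n-r_1) + (n-r_2) - n = n - r_1 - r_2.
\]
I would next stratify the count by the common intersection $U := V_1 \cap V_2$. The number of subspaces $U \subseteq \mathbb{F}_q^n$ of dimension $n - r_1 - r_2$ is $\qbinom{n}{n-r_1-r_2}{q}$.

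For a fixed such $U$, passing to the quotient $\mathbb{F}_q^n / U$ (which has dimension $r_1 + r_2$) sets up a bijection between pairs $(V_1, V_2)$ with $V_1 \cap V_2 = U$ and $V_1 + V_2 = \mathbb{F}_q^n$, and pairs $(\bar V_1, \bar V_2)$ of subspaces of $\mathbb{F}_q^n/U$ with $\dim \bar V_1 = r_2$, $\dim \bar V_2 = r_1$, and $\bar V_1 \oplus \bar V_2 = \mathbb{F}_q^n / U$. The number of such direct-sum decompositions is obtained by first choosing $\bar V_1$ in $\qbinom{r_1+r_2}{r_2}{q}$ ways and then choosing a complement of $\bar V_1$ of dimension $r_1$; it is a standard fact that the number of such complements equals $q^{r_1 r_2}$ (parametrize by graphs of linear maps from a fixed complement into $\bar V_1$).

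Multiplying these factors together yields
\[
\M^n_{(n-r_1,n-r_2)}(q)
= \qbinom{n}{n-r_1-r_2}{q} \cdot \qbinom{r_1+r_2}{r_2}{q} \cdot q^{r_1 r_2}
= q^{r_1 r_2} \qbinom{n}{r_1,\, r_2,\, n-r_1-r_2}{q},
\]
as desired. The only nontrivial step is the complement count $q^{r_1 r_2}$, which is the main substantive input; everything else is bookkeeping with $q$-binomials. As a sanity check, when $r_1 + r_2 = n$ this recovers \cref{lem:qMs-qmultinomial} via the identity $\binom{r_1+r_2}{2} - \binom{r_1}{2} - \binom{r_2}{2} = r_1 r_2$.
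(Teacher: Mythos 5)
Your proof is correct and is essentially the paper's argument: the paper also reduces everything to the intersection $U = V_1 \cap V_2$ (parametrizing by the flag $U \subseteq V_1$, so that the $q$-multinomial appears as a flag count and the fiber count is the $q^{r_1 r_2}$ complements of $V_1/U$ in $\mathbb{F}_q^n/U$, computed via a matrix normal form). Your reorganization---choosing $U$ first, then counting direct-sum decompositions of the quotient---is the same computation with the multinomial factored as $\smallqbinom{n}{n-r_1-r_2}{q}\smallqbinom{r_1+r_2}{r_2}{q}$, and your explicit handling of the degenerate case $r_1+r_2>n$ matches the paper's opening remark.
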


\begin{proof}
First observe that both sides of the equation are zero unless $0 \leq r_1,r_2 \leq n$ and $r_1 + r_2 \leq n$;
so we assume this is the case.
By definition $\M^n_{(n-r_1,n-r_2)}(q)$ is the cardinality of 
\[ \MM^n_{(n-r_1,n-r_2)} = \big\{ (V_1, V_2): V_i \subseteq \F_q^n, \ \  \dim(V_i) = n-r_i,  \ \ \textrm{ and } V_1 + V_2 = \F_q^n. \big\},\]
while the multinomial coefficient $\qbinom{n}{r_1,r_2,n-r_1-r_2}{q}$ is the cardinality of
\[ \mathcal{N}^n_{(n-r_1,n-r_2)} := \big \{ U \subseteq W \subseteq \mathbb{F}_q^n:  \dim(U) = n-r_1-r_2 \ \ \textrm{ and }  \ \ \dim(W) = \dim(U) + r_2 = n-r_1\big\}.\]

Consider the map $\varphi: \MM_{(n-r_1,n-r_2)}^n \to \mathcal{N}_{(n-r_1,n-r_2)}^n$ defined by
\[
    \varphi\big((V_1,V_2)\big) = (V_1 \cap V_2 \subseteq V_1).
\]
This map is well-defined since
\[
    \dim(V_1 \cap V_2) = \dim(V_1) + \dim(V_2) - \dim(V_1 + V_2) = n - r_1 - r_2,
\]
and is clearly surjective.
We claim the fibers of $\varphi$ have cardinality $q^{r_1r_2}$, from which the theorem follows:
\[ \M^n_{(n-r_1,n-r_2)}(q) = \# \MM_{(n-r_1,n-r_2)}^n = q^{r_1 r_2} \#\mathcal{N}_{(n-r_1,n-r_2)}^n = q^{r_1 r_2} \qbinom{n}{r_1,r_2,n-r_1-r_2}{q}.\]

Note that by symmetry (i.e. via a $\GL_n(\mathbb{F}_q)$-action), we can assume $(U\subseteq W) \in \mathcal{N}^n_{(n-r_1,n-r_2)}$
is such that $U$ is spanned by the first $n-r_1$ canonical basis vectors,
and $W$ is spanned by $U$ and the last $r_2$ canonical basis vectors.
Let $(V_1,V_2) \in \varphi^{-1}\big((U\subseteq W)\big)$.
Thus, modulo $U = V_1 \cap V_2$ and row operations, $V_2$ is spanned by the rows of a unique matrix of the form
\[
    \begin{pmatrix}
        0 & \cdots & 0 & 1 & 0 & \cdots & 0 & * & \cdots & * \\
        0 & \cdots & 0 & 0 & 1 & \cdots & 0 & * & \cdots & * \\
        \vdots & \ddots & \vdots & \vdots & \vdots & \ddots & \vdots & \vdots & \ddots & \vdots \\
        0 & \cdots & 0 & 0 & 0 & \cdots & 1 & * & \cdots & *.
    \end{pmatrix}
\]
There are exactly $r_1 r_2$ matrices of this form (i.e. free entries), from which the claim follows.
\end{proof}

Next, we give a recurrence for $\M^{n}_{\bf r}(q)$ that specializes to \cref{eq:key recurrence M} when $q = 1$. Interestingly, \cref{eq:recursion-qMs} below has an additional term that vanishes when $q=1$.

\begin{proposition}\label{prop:recursion-qMs}
    The $\M^n_{\bf r}(q)$ satisfy the following recurrence:
    \begin{equation}
        \label{eq:recursion-qMs}
        \M^n_{\bf r}(q) = \left( q^{\sum_i r_i} - q^{n-1} \right) \M^{n-1}_{\bf r}(q) + \sum_{\varnothing \neq T \subseteq [m]} \left( q^{\sum_{i \notin T} r_i} \right) \M^{n-1}_{{\bf r}-{\bf e}_T}(q),
    \end{equation}
    where ${\bf e}_T \in \mathbb{Z}^m$ denotes the indicator vector of $T \subseteq [m]$.
\end{proposition}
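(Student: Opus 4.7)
The plan is to derive the recurrence algebraically from the explicit formula in \cref{prop:explicit-formula-Mq}, using the two standard $q$-Pascal identities
\[
\qbinom{n}{k}{q} = \qbinom{n-1}{k}{q} + q^{n-k}\qbinom{n-1}{k-1}{q}
\qquad\text{and}\qquad
\qbinom{n-k}{r_i}{q} = \qbinom{n-k-1}{r_i-1}{q} + q^{r_i}\qbinom{n-k-1}{r_i}{q}.
\]

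First, I would apply the first identity to $\qbinom{n}{k}{q}$ inside the formula for $\M^n_{\bf r}(q)$ to split $\M^n_{\bf r}(q) = A + B$. In the piece $B$ (the one involving $q^{n-k}\qbinom{n-1}{k-1}{q}$) the reindexing $k \mapsto k+1$, together with the elementary identity $\binom{k+1}{2} + (n-k-1) = \binom{k}{2} + (n-1)$, rewrites $B$ as $-q^{n-1}\,\M^{n-1}_{\bf r}(q)$ by a direct appeal to \cref{prop:explicit-formula-Mq} in dimension $n-1$.

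For $A$, I would apply the second $q$-Pascal identity to every factor of the product $\prod_i \qbinom{n-k}{r_i}{q}$. Expanding by distributivity produces a sum over subsets $T \subseteq [m]$, where $T$ records the indices for which one picks the ``first'' summand; the resulting term factors cleanly as $q^{\sum_{i\notin T}r_i}\prod_{i\in T}\qbinom{n-k-1}{r_i-1}{q}\prod_{i\notin T}\qbinom{n-k-1}{r_i}{q}$. Swapping the sum over $k$ with the sum over $T$, the inner $k$-sum is recognized, via \cref{prop:explicit-formula-Mq} applied to ${\bf r}-{\bf e}_T$ in dimension $n-1$, as $\M^{n-1}_{{\bf r}-{\bf e}_T}(q)$. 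Isolating the $T=\varnothing$ summand contributes the $q^{\sum_i r_i}\,\M^{n-1}_{\bf r}(q)$ term, and combining this with the expression already obtained for $B$ yields the stated recurrence.

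The main obstacle is nothing beyond careful bookkeeping of signs and $q$-exponents, in particular through the reindexing in $B$; no deeper idea is needed. A bijective proof stratifying tuples $(V_1,\ldots,V_m) \in \MM^n_{\bf r}$ by the set $T = \{i : V_i \not\subseteq H\}$ for a fixed hyperplane $H \subset \F_q^n$ is tempting, but the ``extra'' correction $-q^{n-1}\M^{n-1}_{\bf r}(q)$ signals that a direct cell decomposition would have to build in additional inclusion-exclusion, making the algebraic route above cleaner.
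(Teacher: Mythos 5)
Your proof is correct, but it takes a genuinely different route from the paper. You derive the recurrence purely algebraically from the alternating-sum formula of \cref{prop:explicit-formula-Mq} (which does appear before this proposition and is proved independently, so there is no circularity), using the two $q$-Pascal identities; all the key steps check out, in particular the exponent bookkeeping $\binom{k+1}{2}+(n-k-1)=\binom{k}{2}+(n-1)$ that turns the reindexed piece $B$ into $-q^{n-1}\M^{n-1}_{\bf r}(q)$, and the distributive expansion of $\prod_i\qbinom{n-k}{r_i}{q}$ over subsets $T$ whose inner $k$-sum is exactly $\M^{n-1}_{{\bf r}-{\bf e}_T}(q)$. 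The paper instead argues geometrically: it fixes a line $L\subseteq\F_q^n$, projects onto $\F_q^n/L\cong\F_q^{n-1}$, and computes the fibers of the induced map $\MM^n_{\bf r}\to\bigcup_T\MM^{n-1}_{{\bf r}-{\bf e}_T}$; each tuple downstairs has $q^{\sum_{i\notin T}r_i}$ lifts, all lifts span when $T\neq\varnothing$, and for $T=\varnothing$ exactly $q^{n-1}$ lifts fail to span (these are in bijection with hyperplanes not containing $L$), which is where the correction $-q^{n-1}\M^{n-1}_{\bf r}(q)$ comes from. So, contrary to your closing remark, the paper's cell decomposition does not require extra inclusion--exclusion --- the negative term is accounted for by a direct count of the non-spanning lifts. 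Your algebraic route is shorter and entirely mechanical given \cref{prop:explicit-formula-Mq}, and works verbatim at the level of polynomial identities in $q$; the paper's route is longer but explains combinatorially where each term of the recurrence comes from. Both are valid proofs.
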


\begin{proof}
Recall once again that $\M_{\bf r}^n(q)$ is defined to be the cardinality of the set
\[ \MM^n_{\bf r} :=  \Big\{ (V_1,\ldots,V_m): V_i \subset \mathbb{F}_q^{n} \textrm{ with } \dim(V_i) = r_i \textrm{ and } V_1 + \cdots + V_m = \mathbb{F}_q^{n} \Big\}.\]

Fix a one-dimensional subspace $L \subseteq \mathbb{F}_q^n$
and let $\pi_L$ denote the projection $\pi_L : \mathbb{F}_q^n \to \mathbb{F}_q^n/L \cong \mathbb{F}_q^{n-1}$.
Observe that if $(V_1,\ldots,V_m) \in \MM_{\bf r}^n$, then
$\pi_L(V_1) + \cdots + \pi_L(V_m) = \pi_L(\mathbb{F}_q^n) = \mathbb{F}_q^n/L \cong \mathbb{F}_q^{n-1}$ and
\begin{equation*}
    \dim \pi_L(V_i) =
    \begin{cases}
        r_i - 1 & \text{if } L \subseteq V_i,\\
        r_i & \text{if } L \not\subseteq V_i.
    \end{cases}
\end{equation*}
That is, $\big(\pi_L(V_1),\ldots,\pi_L(V_m)\big) \in \MM^{n-1}_{{\bf r}- {\bf e}_T}$,
where $T = \big \{ i \in [m] \mid L \subseteq V_i \big \}$.

Let $\eta_L$ be the following map:
\begin{align*}
    \eta_L: \MM_{\bf r}^n &\longrightarrow \bigcup_{T \subseteq [m]} \MM^{n-1}_{{\bf r}- {\bf e}_T} \\
     (V_1,\ldots,V_m) &\longmapsto \big(\pi_L(V_1),\ldots,\pi_L(V_m)\big).
\end{align*}

We prove \cref{eq:recursion-qMs} by computing the cardinality of the fibers of $\eta_L$.

Given $(U_1,\ldots,U_m) \in \MM^{n-1}_{{\bf r}- {\bf e}_T}$ for some $T \subseteq [m]$,
we say an $r_i$-dimensional subspace $V_i \subseteq \mathbb{F}_q^n$ \emph{lifts} $U_i$ if $\pi_L(V_i) = U_i$.
We also say that a tuple $(V_1,\ldots,V_m)$ \emph{lifts} $(U_1,\ldots,U_m)$ if $V_i$ lifts $U_i$ for each $i$.
Thus, the fiber $\eta_L^{-1}\big( (U_1,\ldots,U_m) \big)$ consists precisely of those lifts that are in $\MM_{\bf r}^n$;
i.e. that span all $\mathbb{F}_q^n$.

We claim that \cref{eq:recursion-qMs} follows immediately from the following:
\begin{enumerate}
\item Any $(U_1,\ldots,U_m) \in \MM^{n-1}_{{\bf r}- {\bf e}_T}$ has exactly $q^{\sum_{i \notin T} r_i}$ lifts.
\item If $T \neq \varnothing$, all lifts of $(U_1,\ldots,U_m) \in \MM^{n-1}_{{\bf r}- {\bf e}_T}$ are in $\MM_{\bf r}^n$.
\item (If $T = \varnothing$,) exactly $q^{n-1}$ lifts of $(U_1,\ldots,U_m) \in \MM^{n-1}_{{\bf r}}$ are not in $\MM_{\bf r}^n$.
\end{enumerate}

To prove the first claim, note that if $\dim(U_i) = r_i - 1$ (equivalently, if $i \in T$),
then $U_i$ has a unique lift $V_i = \pi_L^{-1}(U_i)$.
On the other hand, if $\dim(U_i) = r_i$ (equivalently, if $i \not \in T$), then any $r_i$-dimensional subspace $V_i \subseteq \pi_L^{-1}(U_i)$ not containing $L$ is lifts $U_i$.
In the latter case, where $\dim(U_i) = r_i$, we have
\[
    \#\big\{ V_i \subseteq \pi_L^{-1} (U_i) : \dim V_i = r_i, \  L \not\subseteq V_i \big\} = [r_i + 1]_q - \qint{r_i} = q^{r_i}.
\]
The first claim then follows since the components $V_i$ of an arbitrary lift of $(U_1,\ldots,U_m) \in \MM^{n-1}_{{\bf r}- {\bf e}_T}$ can be chosen independently.

Now, assume $T \neq \varnothing$ and let $(V_1,\ldots,V_m)$ be a lift of $(U_1,\ldots,U_m) \in \MM^{n-1}_{{\bf r}- {\bf e}_T}$.
Thus, $L \subseteq V_i$ for any $i \in T$, and therefore $L \subseteq V_1 + \cdots + V_m$.
Hence,
\[
    n-1 = \dim(U_1 + \cdots + U_m) = \dim(\pi_L(V_1 + \cdots + V_m)) = \dim(V_1 + \cdots + V_m) - 1,
\]
from which we conclude that $\dim(V_1 + \cdots + V_m) = n$ and so $(V_1,\ldots,V_m) \in \MM_{\bf r}^n$.
This proves the second claim.

Finally, we note that a lift $(V_1,\ldots,V_m)$ of $(U_1,\ldots,U_m) \in \MM^{n-1}_{{\bf r}}$ is not in $\MM_{\bf r}^n$ if and only if there exists a hyperplane $H \subseteq \mathbb{F}_q^n$ that contains each of the $V_i$.
Suppose this is the case and let $H$ be this hyperplane.
Observe that the hyperplane $H$ cannot possibly contain $L$;
otherwise, all the $U_i$ would be contained in $\pi_L(H)$, a proper subspace of $\mathbb{F}_q^n/L$.
Moreover, since $\pi_L$ induces an isomorphism $\pi_L\big|_H : H \to \mathbb{F}_q^n/L$,
the hyperplane $H$ uniquely determines the lift: $V_i = \big(\pi_L\big|_H\big)^{-1}(U_i)$ for each $i$.
There are exactly
\[ \qbinom{n}{n-1}{q} - \qbinom{n-1}{n-2}{q} = q^{n-1} \] 
hyperplanes $H \subset \mathbb{F}^n_q$ that do not contain $L$.
This establishes the third claim, and thus \cref{eq:recursion-qMs}.
\end{proof}

The special cases of \cref{lem:qMs-qmultinomial,cor:qMs m=2} suggest that $\M^n_{\bf r}(q) \in \Z_{\geq 0}[q]$. However, this positivity is not implied by \cref{prop:explicit-formula-Mq} nor the recurrence in \cref{prop:recursion-qMs}. \cref{prop:positive recurrence Ms} shows that this intuition is correct; in other words, the elements $\M^n_{\bf r}(q)$ are polynomials in $q$ with non-negative integer coefficients\footnote{In January 2025, Morales presented the conjecture that $\M^n_{\bf r}(q) \in \Z_{\geq 0}[q]$ at a talk at CAAC 2025 in Toronto, ON. Hunter Spink, who was in the audience, mentioned the conjecture to his student Matthew Bolan who proved this conjecture and communicated his proof to Morales; we include his proof for completeness.}.

\begin{proposition}[Bolan] \label{prop:positive recurrence Ms}
The coefficients $\M^n_{\bf r}(q)$ satisfy the following recurrence
\begin{equation} \label{eq:Bolan q recurrence}
\M^n_{(r_1,\ldots,r_m)}(q) = \sum_{i=0}^n q^{(n-i)(n-r_m)}\qbinom{n}{i}{q} \qbinom{i}{r_m-n+i}{q} \M^{i}_{(r_1,\ldots,r_{m-1})}(q),
\end{equation}
with base case $\M^n_{(r)}(q)=\delta_{n,r}$ for $n,r \in \Z_{\geq 0}$.
Thus $\M^n_{\bf r}(q) \in \Z_{\geq 0}[q]$.
\end{proposition}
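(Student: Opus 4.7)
The plan is to prove the recurrence \cref{eq:Bolan q recurrence} by a direct enumerative argument that $q$-deforms the classical proof of \cref{eq:Bolan recurrence} described in the paper, then derive positivity by induction on $m$.

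The first step is to partition $\MM^n_{(r_1,\ldots,r_m)}$ by the subspace $W := V_1 + V_2 + \cdots + V_{m-1} \subseteq \F_q^n$ and its dimension $i := \dim W$. Under this partition, a tuple $(V_1,\ldots,V_m)$ decomposes into the following pieces of data:
\begin{enumerate}
\item an $i$-dimensional subspace $W \subseteq \F_q^n$ (there are $\qbinom{n}{i}{q}$ choices),
\item a tuple $(V_1,\ldots,V_{m-1}) \in \MM^i_{(r_1,\ldots,r_{m-1})}$, viewing $W \cong \F_q^i$ (there are $\M^i_{(r_1,\ldots,r_{m-1})}(q)$ choices), and
\item an $r_m$-dimensional subspace $V_m \subseteq \F_q^n$ satisfying $V_m + W = \F_q^n$.
\end{enumerate}

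The key computation is counting the number of $V_m$ in item (3) for a fixed $W$ of dimension $i$. The condition $V_m + W = \F_q^n$ forces $\dim(V_m \cap W) = r_m + i - n =: k$, so I would count such $V_m$ in two steps: first choose $U := V_m \cap W$ inside $W$, giving $\qbinom{i}{k}{q} = \qbinom{i}{r_m-n+i}{q}$ options; then choose $V_m \supseteq U$ of dimension $r_m$ with $V_m \cap W = U$. Passing to the quotient $\F_q^n/U$, the second choice is equivalent to picking an $(r_m-k)$-dimensional subspace of the $(n-k)$-dimensional space $\F_q^n/U$ that intersects the image of $W$ (which has dimension $i - k$) trivially. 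Using the standard $q$-count
\[ \#\{A\subseteq \F_q^N : \dim A = a,\ A \cap B = 0\} = q^{ab}\qbinom{N-b}{a}{q} \]
for a fixed $b$-dimensional $B \subseteq \F_q^N$, with $N = n-k$, $b = i - k = n - r_m$, and $a = r_m - k = n - i$, this yields $q^{(n-i)(n-r_m)} \qbinom{n-i}{n-i}{q} = q^{(n-i)(n-r_m)}$ choices of $V_m$ above a given $U$. Multiplying with item (1) and (2) and summing over $i$ gives \cref{eq:Bolan q recurrence}.

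For the positivity claim, I would induct on $m$. The base case $\M^n_{(r)}(q) = \delta_{n,r}$ is trivially in $\Z_{\geq 0}[q]$, and the inductive step is immediate from \cref{eq:Bolan q recurrence}, since $q$-binomial coefficients lie in $\Z_{\geq 0}[q]$ and the exponent $(n-i)(n-r_m)$ is non-negative.

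The main obstacle is the subspace count in item (3): one has to verify the power of $q$ by carefully setting up the auxiliary bijection $V_m \leftrightarrow (U, V_m/U)$ and applying the count of $a$-subspaces avoiding a fixed $b$-subspace, whose proof goes through counting ordered linearly independent vectors outside $B$ and dividing by the number of ordered bases of an $a$-space. Everything else in the argument is routine bookkeeping of the partition over $W$.
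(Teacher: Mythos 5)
Your proposal is correct and follows essentially the same route as the paper: partition $\MM^n_{(r_1,\ldots,r_m)}$ by $W = V_1+\cdots+V_{m-1}$ and its dimension $i$, count the choices of $W$ and of $(V_1,\ldots,V_{m-1})$ inside it, and then count the extensions $V_m$ with $V_m + W = \F_q^n$. Your careful verification that there are $q^{(n-i)(n-r_m)}\smallqbinom{i}{r_m-n+i}{q}$ such $V_m$ (via choosing $U = V_m\cap W$ and then counting complements in the quotient) is exactly the step the paper's proof asserts without detail, so your write-up is if anything more complete.
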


\begin{proof}
Recall that $\M_{\bf r}^n(q)$ is the cardinality of the set
\[ \MM_{\bf r}^n :=  \Big\{ (V_1,\ldots,V_m): V_i \subset \mathbb{F}_q^{n} \textrm{ with } \dim(V_i) = r_i \textrm{ and } V_1 + \cdots + V_m = \mathbb{F}_q^{n} \Big\},\]
Consider the set
\[ \MM'= \Big\{ \big(W,(V_1,\ldots,V_{m-1})\big): \dim(W) = i, \ V_1+\cdots+V_{m-1} = W, \ \textrm{ and } \dim(V_i) = r_i \ \textrm{ for } 1 \leq i \leq m-1  \Big\}.\]
Then
\[\#\MM' = \qbinom{n}{i}{q} \cdot \M^{i}_{(r_1,\ldots,r_{m-1})}(q). \]
We claim there are
\begin{equation}\label{eq:extendingA'}
q^{(n-i)(n-r_m)}\qbinom{i}{r_m-n+i}{q} \end{equation}
ways to extend an element in $\MM'$ to one in $\MM_{\bf r}^n$, from which \cref{eq:Bolan q recurrence} will follow.

Suppose $(W, (V_1,\ldots,V_{m-1})) \in \MM'$; to extend this to an element of $\MM_{\bf r}^n$, we must pick a vector space $V_m \subseteq \F_q^n$ such that
\begin{enumerate}
    \item $\dim(V_m) = r_m$, and
    \item  $W \cap V_m = r_m-i$, since $\dim(W) = i$.
\end{enumerate}
The number of ways to pick such a $V_m$ is given by \cref{eq:extendingA'}.
\end{proof}

Finally, we identify the coefficients $\M^n_{\bf r}(q)$ as a $q$-analogue of the
coefficients $\M^n_{\bf r}$ appearing in \cref{thm:HeckeJacksonMs};
that is, these coefficients \emph{$q$-count} the same objects as $\M^n_{\bf r}$ with an appropriate statistic that we define next.

\begin{definition}
\label{def:statistic-for-Mq}
Let $[n] = \{ 1, \ldots, n\}$ and fix ${\bf r}= (r_1, \ldots, r_m)$.
\begin{enumerate}
    \item For subsets $S \subseteq U \subseteq [n]$, define
\[         \inv(S;U) := \#\big\{ (i,j) \in U^2 : i < j \,,\, i \in S \,,\, j \notin S \big\}.
\]
\item Suppose there is a tuple $(S_1,\ldots,S_m)$ of
    (not necessarily disjoint) subsets of $[{n}]$
    such that
    \begin{enumerate}
        \item $S_1\cup \cdots \cup
    S_m =[{n}]$, and
    \item $\# S_i=r_i$ for $i=1,\ldots,m$.
    \end{enumerate}
Then let $T_i:=S_1\cup \cdots \cup S_i$ (with $T_0=\varnothing$), and define
    \[
        \stat(S_1,\ldots,S_m) := \sum_{i=2}^{m} \Big(
            (\#T_i-\#T_{i-1})(\#T_i-r_i) + \inv(T_{i-1};T_i)+\inv(S_{i}\cap T_{i-1};T_{i-1})
        \Big).
    \]
\end{enumerate}
\end{definition}

\begin{example} \label{ex:stat that gives Ms}
Consider the tuple
\[ (\{1, 2, 3, 5\}, \{1, 3, 5\}, \{2, 4, 6\}) \]
counted by $\M^6_{4,3,3}$. For this tuple, we have that
\begin{align*}
    T_1 & =  \{1,2,3,5\} \\
    T_2 & = T_1 =\{1,2,3,5\}\\
    T_3 & = \{1,2,3,4,5,6\}.
\end{align*}
Hence
\begin{align*}
\stat(\{1, 2, 3, 5\}, &\{1, 3, 5\}, \{2, 4, 6\})    \\
&= 0(4-3) + \inv(T_1;T_2)+\inv(S_2\cap T_1; T_1) + 2(6-3) + \inv(T_2;T_1)+\inv(S_3\cap T_2; T_2) \\
&= 1+6+7+2= 16.
\end{align*}

\end{example}

\begin{proposition} \label{prop:M-via-q-statistic}
For a fixed ${\bf r}= (r_1,\ldots,r_m)$ and $n$, we have
    \[
    \M^n_{\bf r}(q)  = \sum_{\substack{(S_1,\ldots,S_m) \subseteq [n]^m \\ S_1 \cup \cdots \cup S_m = [n]} } q^{\stat(S_1,\ldots,S_m)}.
    \]
\end{proposition}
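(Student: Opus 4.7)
The plan is to prove this by induction on $m$, showing that the right-hand side satisfies Bolan's recurrence from \cref{prop:positive recurrence Ms}. As a preliminary observation, since $\stat(S_1,\ldots,S_m)$ is defined purely in terms of set sizes and inversion counts within the ambient union $T_m = \bigcup_i S_i$, the sum
\begin{equation*}
    F_m(T; \mathbf{r}) := \sum_{\substack{(S_1, \ldots, S_m) \\ \bigcup_i S_i = T, \ |S_i| = r_i}} q^{\stat(S_1, \ldots, S_m)}
\end{equation*}
is invariant under order-preserving bijections of its ambient set, hence depends only on $|T|$; write $F_m(n; \mathbf{r})$ for its common value when $|T| = n$. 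The goal is to show $F_m(n; \mathbf{r}) = \M^n_{\mathbf{r}}(q)$. The base case $m = 1$ is immediate: both sides equal $\delta_{n, r_1}$.

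For the inductive step, I would condition on $T_{m-1} := S_1 \cup \cdots \cup S_{m-1}$ and set $i := |T_{m-1}|$. Since the full union is $[n]$, $S_m$ must contain $[n] \setminus T_{m-1}$; writing $S_m = ([n] \setminus T_{m-1}) \sqcup S'_m$ with $S'_m \subseteq T_{m-1}$ of size $r_m - n + i$, the step-$m$ contribution to $\stat$ becomes
\begin{equation*}
    (n - i)(n - r_m) + \inv(T_{m-1}; [n]) + \inv(S'_m; T_{m-1}).
\end{equation*}
These three pieces decouple under the successive summations: the first depends only on $i$, the second only on $T_{m-1}$, and the third only on $S'_m$. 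Applying the standard identity $\sum_{S \subseteq U,\, |S| = k} q^{\inv(S; U)} = \qbinom{|U|}{k}{q}$ first over $S'_m \subseteq T_{m-1}$ yields $\qbinom{i}{r_m - n + i}{q}$, and then over $T_{m-1} \subseteq [n]$ of size $i$ yields $\qbinom{n}{i}{q}$. Combining these with the induction hypothesis $\sum_{(S_1,\ldots,S_{m-1}) :\, \bigcup = T_{m-1}} q^{\stat} = F_{m-1}(i; (r_1, \ldots, r_{m-1})) = \M^i_{(r_1, \ldots, r_{m-1})}(q)$, we obtain
\begin{equation*}
    F_m(n; \mathbf{r}) = \sum_{i = 0}^{n} q^{(n-i)(n-r_m)} \qbinom{n}{i}{q} \qbinom{i}{r_m - n + i}{q} \M^i_{(r_1, \ldots, r_{m-1})}(q),
\end{equation*}
which is exactly Bolan's recurrence for $\M^n_{\mathbf{r}}(q)$ and closes the induction.

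The main obstacle is purely bookkeeping: verifying that the step-$m$ contribution decouples as claimed and matching the resulting summation to Bolan's recurrence term-by-term. The definition of $\stat$ in \cref{def:statistic-for-Mq} appears to have been engineered precisely so that this decoupling works cleanly, so once one writes out the step-$m$ term with $T_m = [n]$ substituted, the proof essentially writes itself.
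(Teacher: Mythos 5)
Your proposal is correct and is essentially the paper's own argument: both rest on Bolan's recurrence \cref{eq:Bolan q recurrence} together with the identity $\qbinom{n}{k}{q} = \sum_{S} q^{\inv(S;[n])}$, with the key (implicit in the paper, explicit in your write-up) observation that the inner sum over $(S_1,\ldots,S_{m-1})$ with fixed union $T_{m-1}$ depends only on $\#T_{m-1}$. You run the computation as an induction on $m$ verifying that the combinatorial sum satisfies the recurrence, whereas the paper unfolds the recurrence into the combinatorial sum by iterating $m-1$ times; these are the same decoupling read in opposite directions, and your version of the bookkeeping checks out.
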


\begin{proof}
Recall (e.g. \cite[Prop. 1.7.1]{EC1}), that $q$-binomial coefficient can be written as a sum over $k$-subsets of $[n]$ with the statistic $\inv(S;[n])$:
\begin{equation}\label{eq:q-binomial identity}
\qbinom{n}{k}{q} = \sum_{S \in \binom{[n]}{k}} q^{\inv(S;[n])},
\end{equation}
where $\binom{[n]}{k}$ denotes the set of $k$-subsets of $[n]$. We apply \cref{eq:q-binomial identity} recursively to the right-hand-side of \cref{eq:Bolan q recurrence} to obtain
\[
\M^n_{(r_1,\ldots,r_m)}(q) = \sum_{T_{m-1}, S_m} q^{(n-\#T_{m-1})(n-r_m) + \inv(T_{m-1};T_m) + \inv(T_{m-1} \cap S_m; T_{m-1})} \M^{\#T_{m-1}}_{(r_1,\ldots,r_{m-1})}(q),
\]
where the sum is over subsets $T_{m-1}, S_m \subset [n]$ with $\#S_m = r_m$ and $T_{m-1} \cup S_m = [n]$. Iterating this $m-1$ more times gives the desired statistic.
\end{proof}

\begin{example}
The following table shows the values of $\stat$  in \cref{def:statistic-for-Mq} for the tuples counted by $\M^3_{(2,2)}$.
\begin{center}
\begin{tabular}{c|c} \hline
$(S_1,S_2)$ & $\stat$ \\
\hline
$(\{1, 2\}, \{1, 3\})$ & $4$ \\
$(\{1, 2\}, \{2, 3\})$ & $3$ \\
$(\{1, 3\}, \{1, 2\})$ & $3$ \\
$(\{1, 3\}, \{2, 3\})$ & $2$ \\
$(\{2, 3\}, \{1, 2\})$ & $2$ \\
$(\{2, 3\}, \{1, 3\})$ & $1$ \\ \hline
\end{tabular}
\end{center}
Thus, $\M^3_{(2,2)}(q) = q^4 + 2q^3 + 2q^2 + q$.
\end{example}

Calculations for $n \leq 12$ and $m\leq 15$, and the special cases of \cref{lem:qMs-qmultinomial} and $m=2$ (\cref{cor:qMs m=2}) suggest that the polynomials $\M^n_{(r_1,\ldots,r_m)}(q)$ are unimodal. 

\begin{conjecture}
\label{rem:unimodality of qMs}
For $n \in \Z_{\geq 0}$ and integers $0 \leq r_1,\ldots,r_m \leq n$, we have that $\M^n_{\bf r}(q)$ is unimodal.
\end{conjecture}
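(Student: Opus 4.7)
The plan is to combine a palindromicity (symmetry) result with a structured induction. I would first attempt to establish that $\M^n_{\bf r}(q)$ is palindromic, meaning $\M^n_{\bf r}(q) = q^{D} \M^n_{\bf r}(q^{-1})$ where $D$ is the top degree, since the unimodality of palindromic polynomials is classically controlled by Lefschetz-type phenomena. Palindromicity is already confirmed in the special cases of \cref{lem:qMs-qmultinomial} and \cref{cor:qMs m=2}, and is consistent with a geometric interpretation of $\M^n_{\bf r}(q)$ as the Poincaré polynomial (via an affine paving over $\C$) of the quasi-projective variety
\[
X^n_{\bf r} := \Bigl\{(V_1,\ldots,V_m) \in \textstyle\prod_i \mathrm{Gr}(r_i,\C^n) \,\Big|\, V_1 + \cdots + V_m = \C^n\Bigr\}.
\]
The statistic $\stat$ of \cref{prop:M-via-q-statistic} is exactly the right shape to be a cell-dimension in a Schubert-type paving of $X^n_{\bf r}$, so Poincaré duality on a suitable compactification should yield palindromicity; independently, one can seek an involution on spanning tuples $(S_1,\ldots,S_m)$ sending $\stat$ to $D-\stat$.

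With palindromicity in hand, I would prove unimodality by induction on $m$ using Bolan's recurrence (\cref{prop:positive recurrence Ms}):
\[
\M^n_{(r_1,\ldots,r_m)}(q) = \sum_{i=0}^n q^{(n-i)(n-r_m)} \qbinom{n}{i}{q} \qbinom{i}{r_m-n+i}{q} \M^{i}_{(r_1,\ldots,r_{m-1})}(q).
\]
By the inductive hypothesis, each $\M^{i}_{(r_1,\ldots,r_{m-1})}(q)$ is palindromic unimodal; the $q$-binomials are so classically, and pure $q$-powers trivially. Since the product of palindromic unimodal polynomials with nonnegative coefficients is again palindromic unimodal---with center of symmetry equal to the sum of the individual centers---each summand above is palindromic unimodal. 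A sum of palindromic unimodal polynomials sharing a common center of symmetry is again palindromic unimodal, so the remaining task reduces to verifying that the centers of all summands coincide, which is an explicit degree computation using the inductive palindromicity of $\M^{i}_{(r_1,\ldots,r_{m-1})}(q)$.

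The hardest step I expect is precisely this common-center verification: a priori the centers of the various summands need not align, and this type of obstruction is the classical difficulty behind proofs of $q$-binomial unimodality in the Sylvester--O'Hara--Zeilberger tradition. If the centers do coincide, the induction closes cleanly; if not, one must pair summands whose centers are symmetric about the global center $D/2$ and argue unimodality pair-by-pair, which is considerably more delicate. As a fallback, the geometric route---constructing an $\mathfrak{sl}_2$-action on the cohomology of a compactification of $X^n_{\bf r}$ via a Kähler (Lefschetz) class---would yield palindromicity and unimodality simultaneously, at the cost of more substantial input from algebraic geometry.
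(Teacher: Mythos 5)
First, a framing point: the statement you are addressing is stated in the paper as a \emph{conjecture}. The authors give no proof, only computational evidence for $n \leq 12$, $m \leq 15$ together with the special cases of \cref{lem:qMs-qmultinomial} and \cref{cor:qMs m=2}. So there is no argument of theirs to compare against; the only question is whether your plan could settle the conjecture.

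It cannot as written, because its load-bearing first step is false: $\M^n_{\bf r}(q)$ is not palindromic in general. Take $n=2$ and ${\bf r}=(1,1,1)$. A triple of lines in $\F_q^2$ fails to span exactly when all three coincide, so
\[
\M^2_{(1,1,1)}(q) \;=\; (q+1)^3 - (q+1) \;=\; q^3 + 3q^2 + 2q,
\]
which one can also confirm from \cref{eq:formula-Mq}; the coefficient sequence $(1,3,2)$ is unimodal but not symmetric. This also undercuts the geometric fallback: the variety you define is a \emph{non-compact} open subvariety of a product of Grassmannians (the spanning condition is open), so its point count has no reason to obey Poincaré duality --- compare $\mathbb{A}^1\setminus\{0\}$ with point count $q-1$ --- and a hard Lefschetz package would require first building a compactification whose cohomology computes these coefficients, which is where all the difficulty would live. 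With palindromicity gone, the inductive step collapses too: the summands in Bolan's recurrence \cref{eq:Bolan q recurrence} are products of unimodal polynomials supported on different degree windows with different centers, and a sum of unimodal polynomials with misaligned modes need not be unimodal. You correctly identified the common-center verification as the crux, but the example above shows the obstruction is not merely technical: no symmetry is available to force alignment, so any successful argument must control how the modes of the summands interleave, or find an entirely different structure (e.g.\ a symmetric-chain--type decomposition of the spanning tuples refining the statistic $\stat$ of \cref{def:statistic-for-Mq}). As it stands the conjecture remains open and your outline does not close it.
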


However, as in the case of the $q$-binomial coefficients, the polynomials $\M_{{\bf r}}(q)$ are not log-concave. For example $\M^4_{(2,2)}(q) = q^4 + q^5 + 2q^6 + q^7 + q^8$. Also, the polynomials $\M_{{\bf r}}(q)$ are not necessarily palindromic. For example, $\M^3_{(2,2,1)}(q)=q^6+3q^5+6q^4+6q^3+4q^2+q$.

\begin{remark}
For the $q=1$ case, the coefficients $\M_{\bf r}^n$ have a generating polynomial given by the closed formula in \cref{eq:gf for Ms}, which follows from the recurrence \eqref{eq:key recurrence M}. Since the $q$-analogue of this recurrence in \cref{prop:recursion-qMs} has an extra term, it is unclear whether there is a closed formula for the generating polynomial of  $\M_{\bf r}^n(q)$.     
\end{remark}

\section{Final remarks}
\label{sec:final-remarks}

In this section, we discuss various questions and connections that remain open, as well as directions of future work.

\subsection{Combinatorial proofs}
Our main theorems are proved algebraically. It would be very interesting to give combinatorial proofs of any of these results.

As a comparison, many of the first results on factorizations (e.g. \cite{Sta81,Jackson0,Jackson1}) were first proved algebraically using characters of the symmetric group. However, many of the leading term product formulas have elegant combinatorial proofs. For example, D\'enes \cite{Denes} gave a double counting proof of \cref{eq: tree case}, followed by bijective proofs by Moszkowski \cite{Mozkowski}. Later, Goulden and Jackson in \cite{GouldenJackson} gave a bijection from factorizations to certain {\em planar cacti} counted by the right-hand-side of \cref{eq:GJcacti}. Moreover, the identity \cref{eq:JacksonSn} for the generating polynomial of the long cycle into two factors has different bijective proofs by Schaeffer and Vassilieva \cite{SchaefferVassilieva}, Bernardi \cite{Bernardi} and Chapuy, F\'eray, and Fusy \cite{CFF}, and for $m$ factors by Bernardi and Morales \cite{BM1,BM2}. These bijective proofs are given in terms of {\em unicellular maps}, certain one face graphs embedded on orientable surfaces in correspondence with long cycle factorizations that have remarkable combinatorial bijections and decompositions (e.g. see \cite{Schaeffersurvey}). 

\subsection{Exponential generating functions for $e_{1^j}(\Xi_n(q))$ and $h_j(\Xi_n(q))$}

In \cref{thm:tree conj} and \cref{thm:cat conj} we found Hecke algebra analogues of the closed forms in \cref{eq:Jackson2} and \cref{eq:Masumoto-Novak} giving the ordinary generating functions for the numbers $a(n;j)$ and $b(n;j)$. There are also elegant closed formulas for the exponential generating functions of these numbers in \cite{Jackson1},\cite[Eq. (47),(48)]{MN}:
\begin{align}
\sum_{j\geq n-1} a(n;n-1+2g) \frac{t^{2g}}{(2g)!}  &= n^{n-2} n^{2g} \binom{n-1+2g}{n-1} \left(\frac{ \sinh(t/2)}{t/2}\right)^{n-1}, \label{eq:exp gf Jackson}\\
\sum_{j\geq n-1} b(n;n-1+2g) \frac{t^{2g}}{(2g)!}  &= C_{n-1} \binom{2n-2+2g}{2n-2}  \left(\frac{ \sinh(t/2)}{t/2}\right)^{2n-2}.
\end{align}
It would be interesting to find Hecke algebra analogues of these closed forms.

\subsection{Hecke analogues of factorizations for other permutations}

The main results of this paper concern generalizations of factorization results of the long cycle $\sfc$. However, there are other important factorization formulas for permutations other than $\sfc$, such as {\em Hurwitz numbers} \cite{Hurwitz} (see also \cite{CavalieriMiles,ALCO_GJSurvey}),  {\em monotone Hurwitz numbers} \cite{goulden2013monotone},  {\em star factorizations} \cite{IR} (see also \cite{Tenner1,Tenner2,GJTransitivePowers,LothRattan,FERAY}), and {\em weighted Hurwitz numbers} (see \cite{BenDali-thesis,ChapuyDolega,HarnadMac}). We intend to study Hecke analogues for some of these questions in future work \cite{HZHecke}.

\subsection{Connections to diagonal harmonics}
Our results for $a_q(n,n-1)$ and $b_q(n,n-1)$ give $q$ analogues of the number of trees with $n$ vertices and of the $n$th Catalan number, respectively. The $q$-analogue of the former $[n]_q^{n-2}$ has several combinatorial interpretations, for example via the multivariate Cayley theorem (see \cite[Thm.~5.13]{Bona}), one has that
\[
[n]_q^{n-2} = \sum_T  q^{\sum_{i=1}^n (i-1)(\deg_T(i)-1)},
\]
where the sum is over trees $T$ with vertices $\{1,\ldots,n\}$ and $\deg_T(i)$ is the degree of vertex $i$ in $T$.

Similarly, there are various $q$-analogues of the Catalan numbers; the $C_n(q)$ in \cref{thm:intro_q_summary_thm}(2) is due to MacMahon (see \cite{MacMahon}). The polynomial $C_n(q)$ has several interesting combinatorial interpretations, for example
\[  
C_n(q) = \sum_{w\in BW(n)}q^{\mathrm{maj}(w)} \] where $BW(n)$ is a set of binary Dyck words of length $n$ and $\maj(w)$ is the {\em major index}.
Moreover, these two $q$-analogues can be obtained from a bigraded Hilbert series related to the space of {\em diagonal harmonics} (see \cite{HaglundBook}). Indeed, if $DH_n$ is the {\em space of diagonal harmonics} of dimension $(n+1)^{n-1}$ and $DH_n^{\epsilon}$ is its {\em alternant}, a certain subspace of $DH_n$ of dimension $C_n$, then by results of Haiman \cite{HaimanDHn} and Garsia--Haiman \cite{GHaiman}, we have that 
\begin{align*}
a_q(n; n-1) &= q^{1-n}\Hilb(DH_{n-1};q,1/q),\\
b_q(n;n-1) &= q^{1-n}\Hilb(DH^{\epsilon}_{n-1};q,1/q).
\end{align*}
The latter Hilbert series is also called the {\em $(q,t)$-Catalan polynomial} (see \cite[Ch. 3]{HaglundBook}). Our $q$-analogue of the Narayana numbers in the formula $c_q(n;k,n-1-k)$ also appears in the context of diagonal harmonics \cite{qtNarayana}.
It is natural to wonder if there are connections between these bigraded Hilbert series and our work.

\subsection{Connections to the general linear group}
As mentioned in the introduction, another natural source of $q$-analogues comes from the general linear group $\GL_n(\F_q)$, where $\F_q$ is a finite field of characteristic $q$. We discuss in \cref{thm:GLn factorization} the factorization results proved for $\GL_n(\F_q)$. The relationship between $\HH_n(q)$ and $\GL_n(\F_q)$, makes it quite tempting to relate our work to the results in \cref{thm:GLn factorization}.

We can translate the factorization question in the symmetric group into the realm of $\GL_n(\F_q)$ by replacing a long cycle with a \emph{regular elliptic element} and transpositions with \emph{reflection elements}. These notions are defined as follows.
Multiplication by an element of the field $\F_q$ in $\F_{q^n}$, an $n$-dimensional vector space over $\F_q$, is  a linear transformation. Thus, there is a natural inclusion
\[ \iota: \F_{q^n}^{\times} \hookrightarrow \GL_n(\F_q). \]
An element $c \in \GL_n(\F_q)$ is said to be \emph{regular elliptic} if $c = \iota(\omega)$ for an element $\omega \in \F_{q^n}^\times$ with the property that $\{1, \omega, \omega^2, \ldots, \omega^{n-1}\}$ is a linear basis of $\F_{q^n}$ as an $\F_q$ vector space.

When $\omega$ satisfies the additional condition that it is a generator of the cyclic group $\F_{q^n}^{\times}$, then $c$ is called a {\em Singer cycle}. There is a well established analogy between long cycles in $\mathfrak{S}_n$ and Singer cycles in $\GL_n(\mathbb{F}_q)$ (see \cite[Section 9]{RSWcyclic,lewis2024Singer}). For instance,  just as a long cycle acts transitively on $\{1,\ldots,n\}$, a Singer cycle acts transitively on lines in $\mathbb{F}_q^n$. An element $t_i \in \GL_n(\mathbb{F}_q)$ is a \emph{reflection} if it fixes a hyperplane in $\F_q^n$.

Here are some results of factorizations in $\GL_n(\F_q)$ proved using characters.

\begin{theorem}\label{thm:GLn factorization}
    For $n \geq 1$ and $q = p^m$ for some prime $p$, the following holds.
    \begin{enumerate}
        \item {\rm (Lewis--Reiner--Stanton \cite{LRS}):}
Let $c$ be a regular elliptic element in $\GL_n(\mathbb{F}_q)$, then there are
\[ (q^n-1)^{n-1} \]  factorizations of $c$ into $n$ reflections. \medskip
\item {\rm (Huang--Lewis--Reiner \cite{HLR}): }  Let $c$ be a regular elliptic element in  $\GL_n(\mathbb{F}_q)$ and $\alpha=(\alpha_1,\ldots,\alpha_m)$ be a composition of $n$, then the number of factorizations of $c$ as $g_1\cdots g_m$ where $g_i \in \GL_n(\mathbb{F}_q)$ has fixed space dimension $\alpha_i$ is $q^{\epsilon(\alpha)}(q^n-1)^m$, where
\[
\epsilon(\alpha) = \sum_{i=1}^m (\alpha_i-1)(n-\alpha_i).
\]
\medskip
\item {\rm (Lewis--Morales  \cite{LMq}):}
Let $c$ be a regular elliptic element in $\GL_n(\mathbb{F}_q)$, and let $a_{r_1,\ldots,r_k}$ be the number of factorizations of $c$ as $g_1\cdots g_k$ where $g_i \in \GL_n(\mathbb{F}_q)$ has fixed space dimension $r_i$, then
\begin{equation} \label{eq:genseries-manyfactors}
     \begin{multlined}
        \frac{1}{|\GL_n(\mathbb{F}_q)|^{k - 1}}
        \sum_{r_1, \ldots, r_k} a_{r_1, \ldots, r_k}(q) \cdot x_1^{r_1} \cdots x_k^{r_k}
        \\
     \qquad   =\sum_{ \substack{{\bf p} = (p_1, \ldots, p_m) \colon \\ 0 \leq p_i \leq n}  }
        \frac{\widehat{\M}^{n - 1}_{\widehat{\bf p}}(q)}{\prod_{p \in \widehat{\bf p}} \qbinom{n-1}{p}{q}} \cdot
        \frac{(x_{1};q^{-1})_{p_1}}{(q;q)_{p_1}} \cdots \frac{(x_{k};q^{-1})_{p_k}}{(q;q)_{p_k}}.
    \end{multlined}
\end{equation}
Here $\widehat{\bf p}$ is the result of deleting all copies of $n$ from $\bf p$,
\[ \widehat{\M}^{m}_{\varnothing}(q) := 0,  \qquad \qquad (a,q)_k =(1-a)(1-aq)\cdots (1-aq^{k-1}), \]
and for $k > 0$
\begin{equation} \label{eq:def-qM}
\widehat{\M}^{m}_{r_1,\ldots,r_k}(q) := \sum_{d=0}^{\min(r_i)} (-1)^d
q^{\binom{d+1}{2}-kd} \qbinom{m}{d}{q} \prod_{i=1}^k \qbinom{m-d}{r_i-d}{q}.
\end{equation}
    \end{enumerate}
\end{theorem}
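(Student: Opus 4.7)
The plan is to apply the classical Frobenius formula for counting solutions to a product equation in a finite group, specialized to $G = \GL_n(\mathbb{F}_q)$. Recall that for conjugacy classes $C_1, \ldots, C_k$ in $G$ and $c \in G$,
\[
\#\{(g_1,\ldots,g_k) \in C_1 \times \cdots \times C_k : g_1 \cdots g_k = c\}
= \frac{|C_1| \cdots |C_k|}{|G|} \sum_{\chi \in \mathrm{Irr}(G)} \frac{\chi(C_1) \cdots \chi(C_k)\, \overline{\chi(c)}}{\chi(1)^{k-1}}.
\]
The decisive observation is that when $c$ is regular elliptic in $\GL_n(\mathbb{F}_q)$, the value $\chi(c)$ vanishes for most irreducible $\chi$: only the unipotent characters indexed by hook shapes $(n{-}k, 1^k)$ contribute, and $\chi^{(n-k,1^k)}(c)$ admits an explicit closed form. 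This is precisely the $\GL_n(\mathbb{F}_q)$ counterpart of the Hecke-algebraic fact in \cref{ex:qchar-longcycle}, and it reduces the Frobenius sum to an alternating sum over just $n$ hook characters.

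For part (1), each factor $g_i$ ranges over the set of reflections. Plugging the hook character values into the Frobenius sum, the resulting alternating $q$-identity should telescope to $(q^n-1)^{n-1}$ via a $q$-binomial manipulation similar in spirit to our proof of \cref{thm:tree conj}. For part (2), $g_i$ ranges over the union of conjugacy classes with fixed-space dimension $\alpha_i$; the new ingredient is the evaluation of $\sum_{\dim \mathrm{Fix}(g) = r} \chi^{(n-k,1^k)}(g)$. This sum can be attacked using the permutation character of $\GL_n(\mathbb{F}_q)$ acting on $r$-dimensional subspaces, and for hook unipotents it should reduce to a clean $q$-binomial. Combining with \cref{ex:qhookdimension} for the hook dimension $f^{(n-k,1^k)}(q)$ and with the explicit $\chi(c)$ values produces the claimed $q^{\epsilon(\alpha)}(q^n-1)^m$.

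For part (3), one packages parts (1)--(2) into a multivariate generating series. The change of basis from the monomials $x_i^{r_i}$ to the $q$-falling-factorial-like basis $(x_i; q^{-1})_{p_i}/(q;q)_{p_i}$ should arise from a $q$-Chu--Vandermonde style identity parallel to \cref{prop:etilde}, and the polynomials $\widehat{\M}^{m}_{\bf r}(q)$ emerge from the collapse of an alternating double sum, in perfect analogy with how $\M^{n}_{\bf r}(q)$ emerged in the proof of \cref{thm:HeckeJacksonMs}; indeed the defining formula \cref{eq:def-qM} is manifestly a Möbius inversion over a subspace lattice, mirroring \cref{prop:explicit-formula-Mq}.

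The main obstacle is step (2): controlling $\sum_{\dim \mathrm{Fix}(g) = r} \chi^{(n-k,1^k)}(g)$ for each hook unipotent. Unlike the symmetric-group setting, where the Murnaghan--Nakayama rule makes cycle-type sums transparent, here one must organize the computation via induced representations $\mathrm{Ind}_P^G \mathbf{1}$ from parabolic subgroups $P$, or directly consult Green's character table of $\GL_n(\mathbb{F}_q)$. Extracting a clean $q$-binomial expression from either approach is the technically delicate heart of the argument; once it is in hand, parts (1) and (2) follow by substitution and elementary $q$-identities, and part (3) is a formal packaging step.
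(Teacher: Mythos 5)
First, a point of calibration: the paper does not prove \cref{thm:GLn factorization} at all. It appears in the final-remarks section as a survey of results due to Lewis--Reiner--Stanton, Huang--Lewis--Reiner, and Lewis--Morales, cited without proof, precisely because these are theorems from the literature that the authors are contrasting with their own Hecke-algebra results. So there is no in-paper argument to compare yours against; what can be assessed is whether your sketch is a viable reconstruction of the cited proofs.

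Your overall frame (the Frobenius product formula plus a vanishing theorem for characters at a regular elliptic element) is indeed the strategy of the cited papers, but your ``decisive observation'' is wrong as stated, and the error is not cosmetic. It is false that only the $n$ unipotent hook characters of $\GL_n(\mathbb{F}_q)$ are nonzero on a regular elliptic element $c$. In Green's parametrization, the irreducible characters not vanishing on $c$ are those indexed by a single Frobenius orbit of a primitive character $U$ of $\mathbb{F}_{q^d}^\times$ for some divisor $d$ of $n$, decorated with a hook partition of $n/d$; the unipotent hooks are only the $d=1$, $U$ trivial sliver of this family. (This is consistent with the fact that the Hecke algebra $\HH_n(q)$, on which your analogy with \cref{ex:qchar-longcycle} rests, only sees the unipotent block of $\GL_n(\mathbb{F}_q)$.) Consequently the Frobenius sum does not collapse to an alternating sum over $n$ hook characters, and the subsequent steps---the evaluation of $\sum_{\dim \mathrm{Fix}(g)=r}\chi(g)$ and the telescoping to $(q^n-1)^{n-1}$ and $q^{\epsilon(\alpha)}(q^n-1)^m$---must be carried out for the full family of twisted and cuspidal-type characters, which is exactly the technical core of the cited proofs and is absent here. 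Your own closing paragraph concedes that this computation is ``the technically delicate heart of the argument'' and defers it to Green's character table, so even setting aside the incorrect vanishing claim, the proposal is a plan rather than a proof. The paper's own remark that its $\M^{m}_{r_1,\ldots,r_k}(q)$ are ``subtly different'' from the $\widehat{\M}^{m}_{r_1,\ldots,r_k}(q)$ of part (3), with no known dictionary between them, is a further warning that the Hecke-algebra analogy cannot simply be transported to $\GL_n(\mathbb{F}_q)$ in the way you propose.
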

Note that the $\widehat{\M}^{m}_{r_1,\ldots,r_k}(q)$ are subtly different from our $\M^{m}_{r_1,\ldots,r_k}(q)$ (see \cref{eq:formula-Mq}). In particular, while our $\M^{m}_{r_1,\ldots,r_k}(q)$ have a combinatorial interpretation (proved in \cref{prop:M-via-q-statistic} above), there is no such interpretation for $\widehat{\M}^{m}_{r_1,\ldots,r_k}(q)$. It would be interesting to relate the two quantities.

\subsection{Hecke analogue of the Harer--Zagier formula}

There is another celebrated factorization result of the long cycle in the symmetric group called the {\em Harer--Zagier formula} \cite{HarerZagier}. The Harer--Zagier formula was used to compute a certain Euler characteristic of the moduli space of smooth curves; see \cite[Ch. 3, 4]{LandoZvonkin}.

Let $\varepsilon_g(n)$ be the number of factorizations of any long cycle $\sfc[2n] \in \symm_{2n}$ into the product \( \sfc[2n] = \sigma \pi,\)
    where $\sigma$ has cycle type $(2^n)$ (i.e. it is a fixed-point-free involution) and $\pi$ has $n+1-2g$ cycles. Harer and Zagier found the following remarkable generating polynomial and recurrence for the numbers $\epsilon_g(n)$.

\begin{theorem}[Harer--Zagier]\label{thm:HZformula}
    For a fixed nonnegative integer $n$, the numbers  $\varepsilon_g(n)$ have the following generating polynomial.
    \begin{equation} \label{eq:HZ1}
\sum_{g\geq 0} \epsilon_g(n) x^{n+1-2g} = (2n-1)!! \sum_{k=1}^{n+1} 2^{k-1} \binom{n}{k-1} \binom{x}{k}.
\end{equation}
Moreover, the numbers $\epsilon_g(n)$ satisfy the recurrence
\begin{equation} \label{eq:HZ recurrence}
(n+1)\epsilon_g(n) = 2(2n-1)\epsilon_g(n-1) +  (n-1)(2n-1)(2n-3)\epsilon_{g-1}(n-2),
\end{equation}
with initial condition $\epsilon_0(0)=1$. In particular, $\epsilon_0(n) = C_n$.
\end{theorem}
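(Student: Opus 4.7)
My plan is to prove \cref{eq:HZ1} using the isotypic-projector technique from \cref{ssec:isotypic-projectors-technique}, specialized to $q = 1$, following the blueprint of \cref{sec:prototypical-example-isotypic-projectors-technique}. The first step is to reformulate $\varepsilon_g(n)$ algebraically. By \cref{eq:ekevaluation}, the sum of all $\pi \in \symm_{2n}$ with $n+1-2g$ cycles equals $e_{n-1+2g}(\Xi_{2n})$, so
\[
    \varepsilon_g(n) = [\sfc[2n]]\bigl( \cc_{(2^n)} \cdot e_{n-1+2g}(\Xi_{2n}) \bigr).
\]
Multiplying by $x^{n+1-2g}$ and summing over $g \geq 0$, and noting that $[\sfc[2n]]\bigl(\cc_{(2^n)} \cdot e_k(\Xi_{2n})\bigr)$ automatically vanishes for $k \not\equiv n+1 \pmod{2}$ by a sign argument (the sign of $\pi = \sigma \sfc[2n]$ with $\sigma \in \CC_{(2^n)}$ forces $\ell(\pi) \equiv n+1 \pmod{2}$), I obtain
\[
    \sum_{g \geq 0} \varepsilon_g(n)\, x^{n+1-2g} = [\sfc[2n]]\bigl(\cc_{(2^n)} \cdot \tilde{E}_{2n}(x)\bigr).
\]

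Both factors lie in $Z(\kk[\symm_{2n}])$, so combining \cref{eq:c.to.pi} and \cref{lemma:isoprojtechnique} at $q=1$ expands the product in the basis of isotypic projectors. By \cref{lemma:coefficient_longcycle_isotypic}, only hook shapes $\lambda = (2n-k, 1^k)$ survive in $[\sfc[2n]]$. For these, the content product is the rising factorial $(x-k)^{(2n)}$ by \cref{eq:tildeE-symm}, $\#\CC_{(2^n)} = (2n-1)!!$, and $f^{(2n-k,1^k)} = \binom{2n-1}{k}$. The key remaining ingredient is the hook character $\chi^{(2n-k,1^k)}(2^n)$, which I would evaluate via Murnaghan--Nakayama (or, more cleanly, via the classical hook generating identity expressing $\sum_b s_{(N-b,1^b)}(x)\, z^b$ in terms of power sums). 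The signs of $\chi^{(2n-k,1^k)}(2^n)$ combine with the $(-1)^k$ coming from \cref{lemma:coefficient_longcycle_isotypic} so that the hook contributions for $k$ and $k+1$ pair up cleanly, ultimately producing a factor of $2^{k-1}$ on the right-hand side.

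Finally, I would expand each $(x-k)^{(2n)}$ in the binomial basis $\binom{x}{j}$ using the Chu--Vandermonde identity \cref{eq:chu-vandermonde}, and collect terms by $j$. After an inclusion--exclusion collapse analogous to \cref{eq:rel Ms}, the resulting alternating double sum simplifies to $(2n-1)!! \sum_{k \geq 1} 2^{k-1} \binom{n}{k-1} \binom{x}{k}$, which is \cref{eq:HZ1}. The recurrence \cref{eq:HZ recurrence} then follows by comparing coefficients of $\binom{x}{k}$ on both sides of \cref{eq:HZ1} for $n$, $n-1$, and $n-2$; this reduces to an elementary binomial identity involving $2^{k-1}\binom{n}{k-1}$ together with the Pascal-type shift $\binom{x}{k} = \binom{x-1}{k} + \binom{x-1}{k-1}$. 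The main obstacle is the hook character computation and the careful bookkeeping needed to collapse the alternating signs into the clean $2^{k-1}$ multiplicity; deriving the recurrence \cref{eq:HZ recurrence} independently of \cref{eq:HZ1} (which would be desirable for a Hecke-algebra generalization along the lines suggested in \cref{sec:final-remarks}) appears significantly harder and would require a direct Jucys--Murphy-flavored argument in $\HH_{2n}(q)$.
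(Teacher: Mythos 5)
First, note that the paper does not prove \cref{thm:HZformula}: it is quoted from Harer--Zagier \cite{HarerZagier} as a classical result in the final-remarks section, and even its Hecke analogue is deferred to the follow-up \cite{HZHecke}. So there is no in-paper argument to compare against, and I am judging your sketch on its own terms. Your strategy is sound: it is essentially the classical character-sum-over-hooks proof of Harer--Zagier, recast in the paper's isotypic-projector language. The reformulation $\sum_{g}\varepsilon_g(n)\,x^{n+1-2g}=[\sfc[2n]]\bigl(\cc_{(2^n)}\,\tilde{E}_{2n}(x)\bigr)$ is correct, the parity argument is correct, and combining \cref{eq:c.to.pi}, \cref{lemma:isoprojtechnique} and \cref{lemma:coefficient_longcycle_isotypic} at $q=1$ does reduce everything to
\begin{equation*}
\frac{1}{2^n n!}\sum_{k=0}^{2n-1}(-1)^k\,\chi^{(2n-k,1^k)}\bigl((2^n)\bigr)\,(x-k)^{(2n)},
\end{equation*}
using $\#\CC_{(2^n)}=(2n)!/(2^n n!)$ and $f^{(2n-k,1^k)}=\binom{2n-1}{k}$.

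The gaps are the two places you flag, and they are real but fillable. (i) The hook character values: the needed formula is $\chi^{(2n-k,1^k)}\bigl((2^n)\bigr)=(-1)^{\lceil k/2\rceil}\binom{n-1}{\lfloor k/2\rfloor}$, provable by Murnaghan--Nakayama on hooks (each domino removal is horizontal with sign $+1$ or vertical with sign $-1$). With the $(-1)^k$ from \cref{lemma:coefficient_longcycle_isotypic}, the terms $k=2j$ and $k=2j+1$ then both carry weight $(-1)^{j}\binom{n-1}{j}$ --- so the pairing is as you predict, but be aware that the $2^{k-1}$ does \emph{not} come from this pairing; it only emerges after the change to the binomial basis. (ii) After applying \cref{eq:chu-vandermonde}, what remains is the identity $\sum_{j\ge 0}(-1)^j\binom{n-1}{j}\bigl[\binom{2n-1-2j}{r-1-2j}+\binom{2n-2-2j}{r-2-2j}\bigr]=2^{r-1}\binom{n}{r-1}$ for $1\le r\le n+1$ (and $0$ for $r>n+1$). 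This is correct (I checked small cases), but it is a genuine alternating-sum identity requiring its own proof; it is not the inclusion--exclusion of \cref{eq:rel Ms}, since there is no evident set being sieved here, so ``analogous collapse'' is optimistic. Finally, the recurrence \cref{eq:HZ recurrence} lives in the monomial basis $x^{n+1-2g}$, not the binomial basis, so ``comparing coefficients of $\binom{x}{k}$'' is not quite the right move; the standard derivation from \cref{eq:HZ1} goes through the closed form $1+2\sum_{n\ge 0}\frac{y^{n+1}}{(2n-1)!!}\sum_g\epsilon_g(n)x^{n+1-2g}=\bigl(\tfrac{1+y}{1-y}\bigr)^{x}$ and differentiation in $y$. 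In short: a correct and well-chosen outline, but items (i), (ii) and the recurrence step must be carried out before this is a proof.
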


In a follow-up paper \cite{HZHecke} we find a Hecke algebra analogue of the above theorem by building on and extending the techniques of this work.

\subsection{Other types}

The formula $a(n,n-1)=n^{n-2}$ for the number of minimal factorizations of the long cycle $\ncycle[n]$ into transpositions has the following uniform formula for other reflection groups. Let $W$ be a well generated {\em complex reflection group} of rank $n$. Such groups were classified by Shephard and Todd \cite{ShephardTodd}.  Analogues of the long cycle $\ncycle$ and transpositions in $W$ are a {\em Coxeter element} and {\em reflections}, respectively. The order of a Coxeter element is known as the {\em Coxeter number} and is denoted by $h$. See for instance \cite[Section 2]{chapuy_Stump} for definitions.

Given a complex reflection group $W$ of rank $n-1$, if $a_W(k)$ is the number of factorizations of a Coxeter element in $W$ as a product of $k$ reflections, then Bessis showed in \cite[Prop. 7.6]{Bessis_Annals} that
\[ a_W(n-1)=\frac{(n-1)!h^{n-1}}{|W|}. \]
For the symmetric group $W=\mathfrak{S}_n$, which has rank $n-1$, this formula reduces to
\[ a_{\mathfrak{S}_n}(n-1)=a(n,n-1)=n^{n-2}.\] Chapuy and Stump gave in \cite{chapuy_Stump} a uniform formula for the exponential generating function for the sequence $a_W(k)$ for $k\geq n$, which specializes to \cref{eq:exp gf Jackson} in the symmetric group. The original algebraic proof was  case by case and a uniform proof was given by Douvropoulos \cite{DouvropoulosUniform}. Later, Lewis--Morales \cite[Theorem 1.3]{LM} extended \cref{thm:introsymmetricsummary}(3) to some complex reflection groups.

It would be of interest to find analogues of our results for Hecke algebras of other types.

\printbibliography

\end{document}